  \newcommand{\N}{\mathbb{N}}
  \newcommand{\R}{\mathbb{R}}
  \newcommand{\Z}{\mathbb{Z}}
  \newcommand{\cA}{\mathcal{A}}
  \newcommand{\cB}{\mathcal{B}}
  \newcommand{\cE}{\mathcal{E}}
  \newcommand{\cG}{\mathcal{G}}
  \newcommand{\cS}{\mathcal{S}}
  \newcommand{\rS}{\rm{S}}
  \newcommand{\lan}{\langle}
  \newcommand{\ran}{\rangle}
  \newcommand{\an}[1]{\lan#1\ran}
  \newcommand{\hs}{\hspace*{\parindent}}
  \newcommand{\proof}{\hs \textbf{Proof.\ }}
  \newcommand{\low}{\mathop{\mathrm{low}}\nolimits}
  \newcommand{\mult}{\mathrm{mult}}
  \newcommand{\qed}{\hspace*{\fill} $\Box$\\}
  \newcommand{\per}{\mathop{\mathrm{perm}}\nolimits}
  \newtheorem{theo}{\bfseries \hs Theorem}[section]
  \newtheorem{prop}[theo]{\bfseries \hs Proposition}
  \newtheorem{lemma}[theo]{\bfseries \hs Lemma}
  \newtheorem{corol}[theo]{\bfseries \hs Corollary}
  \newtheorem{con}[theo]{\bfseries \hs Conjecture}
  \numberwithin{equation}{section} 
\begin{document}

 \title{On the Number of Matchings in Regular Graphs}

 \author{
 S. Friedland\thanks{Department of Mathematics, Statistics and Computer Science,
   University of Illinois at Chicago, Chicago, Illinois 60607-7045,
   USA (friedlan@uic.edu).},
   E. Krop\thanks{Department of Mathematics, Statistics and Computer Science,
   University of Illinois at Chicago, Chicago, Illinois 60607-7045,
   USA (ekrop1@math.uic.edu).} and
 K. Markstr\"om\thanks{Department of
   Mathematics and Mathematical Statistics, Ume\aa University, SE-901
 87 Ume\aa, Sweden}}

 \date{12 January, 2008}

 \maketitle

 \begin{abstract}
 For the set of graphs with a given degree sequence,
 consisting of any number of $2's$ and $1's$,
 and its subset of bipartite graphs,
 we characterize the optimal graphs who maximize
 and minimize the number of $m$-matchings.

 We find the expected
 value of the number of $m$-matchings of $r$-regular bipartite graphs
 on $2n$ vertices with respect to the two standard measures.
 We state and discuss the conjectured upper and lower bounds for
 $m$-matchings in $r$-regular bipartite graphs on $2n$ vertices,
 and their asymptotic versions for infinite $r$-regular bipartite graphs.
 We prove these conjectures for $2$-regular bipartite graphs
 and for $m$-matchings with $m\le 4$.
 \\[\baselineskip]
 2000 Mathematics Subject Classification: 05A15, 05A16, 05C70,
 05C80, 82B20\\[\baselineskip]
 Keywords and phrases: Partial matching and asymptotic
 growth of average matchings for $r$-regular bipartite graphs,
 asymptotic matching conjectures.
 \end{abstract}

 \section{Introduction}

 Let $G=(V,E)$ be an undirected graph with the set of vertices $V$ and the set of edges $E$.
 An $m$-matching $M\subset E$, is a set of $m$ distinct edges
 in $E$, such that no two edges have a common vertex.  We say
 that $M$ covers $U\subseteq V, \#U=2\#M$, if the set of vertices incident to $M$
 is $U$.  Denote by $\phi(m,G)$ the number of
 $m$-matchings in $G$.
 If $\#V$ is even then $\frac{\#V}{2}$-matching is called a
 \emph{perfect} matching, or $1$-factor of $G$, and
 $\phi(\frac{\#V}{2},G)$ is the number of $1$-factors in $G$.
 For an infinite graph $G=(V,E)$, a match $M\subset E$ is
 a match of density $p\in [0,1]$, if  the proportion of vertices
 in $V$ covered by $M$ is $p$.
 Then the $p$-matching entropy of $G$ is defined as
 $$h_G(p)=
 \limsup_{k\to\infty}\frac{\log\phi(m_k,G_k)}{\#V_k},$$
 where $G_k=(E_k,V_k), k\in\N$ is a sequence of finite graphs
 converging to $G$, and $\lim_{k\to\infty}
 \frac{2m_k}{\#V_k}=p$.  See for details \cite{FKLM}.

 The object of this paper is
 two folds.  First we consider the family $\Omega(n,k)$, the
 set of simple graphs on $n$ vertices with $2k$ vertices of
 degree $1$ and $n-2k$ vertices of degree $2$.
 Let $\Omega_{\textrm{bi}}(n,k)\subset \Omega(n,k)$ be the
 subset of bipartite graphs.
 For each $m\in [2,n]\cap \N$ we characterize the optimal graphs which
 maximize and minimize $\phi(m,G), \;m\ge 2$ for $G\in \Omega(n,k)$ and
 $G\in \Omega_{\textrm{bi}}(n,k)$.
 It turns out the optimal graphs do not depend on $m$ but on
 $n$ and $k$.   Furthermore, the graphs with the maximal number of $m$-matchings,
 are bipartite.

 Second, we consider $\cG(2n,r)$, the set of simple bipartite $r$-regular graphs
 on $2n$ vertices, where $n\ge r$.
 Denote by $C_l$ a cycle of length $l$
 and by $K_{r,r}$ the complete bipartite graph with $r$-vertices in each group.
 For a nonnegative integer $q$ and a graph $G$ denote by $qG$ the disjoint union of $q$
 copies of $G$.
 Let
 \begin{eqnarray}
 &&\lambda(m,n,r):=\min_{G\in \cG(2n,r)}
 \phi(m,G), \quad \Lambda(m,n,r):=\max _{G\in \cG(2n,r)}
 \phi(m,G), \nonumber \\
 && m=1,\ldots,n. \label{minmaxmrn}
 \end{eqnarray}

 Our results on $2$-regular graphs yield.
 \begin{eqnarray}
 &&\lambda(m,n,2)=\phi(m,C_{2n}), \label{lowbdsr2}\\
 &&\Lambda(m,2q,2)=\phi(m,qK_{2,2}),\quad
 \Lambda(m,2q+3,2)=\phi(m,qK_{2,2}\cup C_6),\label{upbdsr2}\\
 &&\textrm{ for } m=1,\ldots,n.\nonumber
 \end{eqnarray}

 The equality $\Lambda(m,2q,2)=\phi(m,qK_{2,2})$ inspired us to
 conjecture the \emph{Upper Matching Conjecture}, abbreviated
 here as UMC:
 \begin{equation}\label{upmatcon}
 \Lambda(m,qr,r))=\phi(m,qK_{r,r}) \textrm{ for  } m=1,\ldots,qr.
 \end{equation}
 For the value $m=qr$ the UMC follows from
 Bregman's inequality \cite{Bre}.
 For the value $r=3$ the UMC holds up to $q\le 8$.
 The results of \cite{FKLM} support the validity of
 the above conjecture for $r=3,4$ and large values of $n$.
 As in the case $r=2$ we conjecture that
 that for any nonbipartite $r$-regular graph on
 $2n$ vertices $\phi(m,G)\le \Lambda(m,n,r)$ for $m=1,\ldots,n$.

 It is useful to consider
 $\cG_{\textrm{mult}}(2n,r)\supset \cG(2n,r)$,
 the set of $r$-regular bipartite graphs on $2n$ vertices, where
 multiple edges are allowed.  Observe that
 $\cG_{\textrm{mult}}(2,r)=\{H_r\}$, where $H_r$ is the
 $r$-regular multi-bipartite graph on $2$ vertices.
 Let

 \begin{eqnarray}
 \mu(m,n,r):=\min_{G\in \cG_{\textrm{mult}}(2n,r)} \phi(m,G),
 \quad
 M(m,n,r):=\max_{G\in \cG_{\textrm{mult}}(2n,r)} \phi(m,G),
 \label{defmuM}\\
 m=1,\ldots,n,\;2\le r\in\N. \nonumber
 \end{eqnarray}
 It is straightforward to show that

 \begin{equation}\label{Mmnrfor}
 M(m,n,r)=\phi(m,nH_r)={n \choose m} r^m, \quad m=1,\ldots,n.
 \end{equation}
 Hence for most of the values of $m$ $\Lambda(m,n,r)<M(m,n,r)$.
 On the other hand, as in the case of $\Omega(n,k)$, it is plausible to conjecture that
 $\lambda(m,n,r)=\mu(n,m,r)$ for all allowable values $m,n$ and $r\ge 3$.

 It was shown by Schrijver \cite{Sch} that for $r\ge 3$
 \begin{equation}\label{nmatchin}
 \phi(n,G)\ge (\frac{(r-1)^{r-1}}{r^{r-2}})^n,
 \quad {\rm for\;all\;} G\in \cG_{\textrm{mult}}(2n,r).
 \end{equation}
 This lower bound is asymptotically sharp.
 In the first version of this paper we stated the conjectured
 lower bound

 \begin{eqnarray}\label{mmatchincon}
 \phi(m,G)\ge {n\choose m}^2 (\frac{nr-m}{nr})^{rn-m}
 (\frac{mr}{n})^m, \\
 {\rm for\;all\;} G\in \cG_{\textrm{mult}}(2n,r)(2n,r)
 \textrm{ and } m=1,\ldots,n.\nonumber
 \end{eqnarray}

 Note that for $m=n$ the above inequality reduces to
 (\ref{nmatchin}).  Our computations suggest a slightly stronger version of the
 above conjecture (\ref{slowmcon}).

 Recently Gurvits \cite{Gur} improved  (\ref{nmatchin}) to

 \begin{equation}\label{gurschr1}
 \phi(n,G) \ge \frac{r!}{r^r}(\frac{r}{r-1}
 )^{r(r-1)}(\frac{(r-1)^{r-1}}{r^{r-2}})^n, \quad G\in  \cG_{\textrm{mult}}(2n,r).
 \end{equation}
 In \cite{FG} the authors were able to generalize the above inequality to partial
 matching, which are very close to optimal results asymptotically,
 see \cite{FKLM} and below.

 The next question we address is the \emph{expected} value
 of the number of $m$-matchings in $\cG_{\textrm{mult}}(2n,r)$.
 There are two natural measures
 $\mu_{1,n,r},\mu_{2,n,r}$
 on  $\cG_{\textrm{mult}}(2n,r)$, \cite[Ch.9]{JLR} and \cite[Ch.8]{LP}.
 Let $E_i(m,n,r)$ be the expected value of
 $\phi(m,G)$ with respect to the measure
 $\mu_{i,n,r}$ for $i=1,2$.  In this paper we show that

 \begin{eqnarray}
 \lim_{k\to\infty} \frac{\log E_i(m_k,n_k,r)}{2n_k}= gh_r(p),
  \textrm{ for } i=1,2,\label{limval}\\
 {\rm if\;} \lim_{k\to\infty} n_k=\lim_{k\to\infty}
 m_k=\infty,\quad {\rm and}\quad
 \lim_{k\to\infty}\frac{m_k}{n_k}=p \in [0,1], \label{seqdefp}
 \\
 gh_r(p):=\frac{1}{2}\big(p \log
 r -p\log p - 2(1-p)\log (1-p)
 +(r-p)\log (1 -\frac{p}{r})\big). \label{defghrp}
 \end{eqnarray}

 In view of (\ref{limval})  the inequalities (\ref{nmatchin}) and  (\ref{gurschr1})
 give the best possible exponential term in the asymptotic growth with respect to $n$,
 as stated in \cite{Sch}.  Similarly, the conjectured
 inequality (\ref{mmatchincon}), if true,
 gives the best possible exponential term in the asymptotic growth with respect to $n$,
 and $p=\frac{m}{n}$.

 For $p\in [0,1]$ let $\low_r(p)$ be the infimum of
 $\liminf_{k\to\infty} \frac{\log\mu(m_k,n_k,r)}{2n_k}$
 over all sequences satisfying (\ref{seqdefp}).
 Hence $h_G(p)\ge \low_r(p)$ for any infinite bipartite
 $r$-regular graph.
 Clearly $\low_r(p)\le gh_r(p)$.
 We conjecture
      \begin{equation}\label{lasmcp}
      \low_r(p)= gh_r(p).
      \end{equation}
 (\ref{lowbdsr2}) implies the validity of this conjecture for
 $r=2$.
 The results of \cite{FG} imply the validity of this conjecture for each $p=\frac{r}{r+s}, s=0,1,\ldots$
 and any $r\ge 3$.  In \cite{FKLM} we give lower bounds on
 $\low_r(p)$ for each $p\in [0,1]$ and $r\ge 3$ which are very close to $gh_r(p)$.

 We stated first our conjectures
 in the first version of this paper in Spring 2005. Since then
 the conjectured were restated in \cite{FG, FKLM} and some
 progress was made toward  validations of these conjectures.

 We now survey briefly the contents of this paper.  In \S2 we
 give sharp bounds for the number of $m$-matchings for general
 and bipartite $2$-regular graphs.  In \S3 we generalize these
 results to $\Omega(n,k)$.  In \S4 we find the
 average of $m$-matchings in $r$-regular bipartite graphs with
 respect to the two standard measures.  We also show the
 equality (\ref{limval}).  In \S5 we discuss the Asymptotic
 Lower Matching Conjecture.  In \S6 we discuss briefly upper
 bounds for matchings in $r$-regular bipartite graphs.
 In \S7 we bring computational results for regular bipartite graphs on
 at most $36$ vertices.  We verified for many of these graphs
 the LMC and UMC.  Among the cubic bipartite graphs on at most $24$ vertices we characterized the
 graphs with the maximal number of $m$-matching in the case $n$
 is not divisible by $3$.  In \S8 we find closed formulas for
 $\phi(m,G)$ for $m=2,3,4$ and any $G\in \cG(2n,r)$.  It turns
 out that $\phi(2,G)$ and $\phi(3,G)$ depend only on $n$ and $r$.
 $\phi(4,G)=p_1(n,r)+a_4(G)$, where $a_4(G)$ is the number of $4$
 cycles in $G$.  $a_4(G)\le \frac{nr(r-1)^2}{4}$ and equality
 holds if and only if $G=qK_{r,r}$.

 \section{Sharp bounds for matching of $2$-regular
 graphs}\label{sec:2reg}
 In this section we find the maximal and the
 minimal $m$ matching of $2$-regular bipartite and non-bipartite graphs on $n$
 vertices.  First we introduce the following partial order on the
 algebra of polynomials with real coefficients, denoted by
 $\R[x]$.  By $0\in \R[x]$ we denote the zero polynomial.

 For any two polynomials $f(x),g(x)\in \R[x]$
 we let $g(x)\succeq f(x)$, or $g\succeq f$, if and only if all the coefficients
 of $g(x)-f(x)$ are nonnegative.  We let $g\succ f$
 if $g\succeq f$ and $g\ne f$.  Let $\R_+[x]$ be the cone of all
 polynomial with nonnegative coefficients in $\R[x]$.  Then
 $\R_+[x] + \R_+[x]=\R_+[x] \R_+[x]=\R_+[x]$.  Furthermore, if
 $g_1\succeq f_1 \succ 0, g_2\succeq f_2 \succ 0$ then
 $g_1g_2 \succ f_1 f_2$ unless $g_1=f_1$ and $g_2=f_2$.

 Denote $\an{n}:=\{1,\ldots,n\}$.
 Let $G=(V,E)$ be a graph on $n$ vertices.
 We will identify $V$ with $\an{n}$.
 We agree that $\phi(0,G)=1$.
 Denote by $\Phi_G(x)$ the
 generating matching polynomial
 \begin{equation}\label{genparmatp}
 \Phi_G(x):=\sum_{m=0}^{\lfloor \frac{n}{2}\rfloor} \phi(m,G)x^m=
 \sum_{m=0}^{\infty} \phi(m,G)x^m.
 \end{equation}
 It is straightforward to show that for any two graphs
 $G=(V,E),G'=(V',E')$ we have the equality
 \begin{equation}\label{prodgenmatp}
 \Phi_{G\cup G'}(x)=\Phi_G(x)\Phi_{G'}(x).
 \end{equation}

 Denote by $P_k$ a path on $k$ vertices: $1-2-3-\cdots-k$.
 View each match as an edge.  Then an $m$-matching of $P_k$ is composed of
 $m$ edges and $k-2m$ vertices.
 Altogether $k-m$ objects.  Hence the number of $m$-matchings is
 equal to the number of different ways to arrange $m$ edges and $k-2m$
 vertices on a line.  Thus
 \begin{eqnarray}\label{mmatchp}
 &&\phi(P_k,m)={k-m\choose m} \quad \texttt{for } m=1,\ldots,
 \lfloor \frac{k}{2}\rfloor,\\
 &&p_k(x):=\Phi_{P_k}(x)=\sum_{m=0}^{\lfloor \frac{k}{2}\rfloor} {k-m\choose m}
 x^m=\sum_{m=0}^{\infty} {k-m\choose m} x^m.  \label{genmatpk}
 \end{eqnarray}
 It is straightforward to see that $p_k(x)$ satisfy the recursive
 relation
 \begin{eqnarray}
 &&p_k(x)=p_{k-1}(x)+xp_{k-2}(x), \quad k=2,\ldots, \label{recrelpk}\\
 &&\texttt{where
 }p_1(x)=1,\;\Phi_{P_0}(x):=p_0(x)=1. \nonumber
 \end{eqnarray}
 Indeed, $p_2(x)=1+x=p_1(x)+xp_0(x)$.
 Assume that $k\ge 3$.  All matchings of $P_k$,
 where the vertex $k$ is not in the matching, generate the
 polynomial $p_{k-1}(x)$.   All matchings of $P_k$,
 where the vertex $k$ is in the matching, generate the
 polynomial $xp_{k-2}(x)$.  Hence the above equality holds.
 Observe next
 \begin{equation}\label{recrelck}
 q_k(x):=\Phi_{C_k}(x)=p_{k}(x)+xp_{k-2}(x), \; k=3,\ldots
 \end{equation}
 Indeed, $p_k(x)$ is the contribution from all matching which
 does not include the matching $1-k$.  The polynomial $xp_{k-2}(x)$
 corresponds to all matchings which include the matching $1-k$.

 Use (\ref{recrelpk}) to deduce
 \begin{eqnarray}
 &&q_k(x)=q_{k-1}(x)+xq_{k-2}(x), \quad k=3,\ldots,
 \label{recrelqk}\\
 &&\texttt{where
 }\Phi_{C_2}:=q_2(x)=1+2x,\;\Phi_{C_1}:=q_1(x)=1.\nonumber
 \end{eqnarray}
 Note that we identify $C_2$ with the $2$-regular multibipartite graph
 $H_2$.
 It is useful to consider (\ref{recrelpk}) for $k=1,0$ and
 (\ref{recrelck}) for $k=2$.  This yields the
 equalities:
 \begin{equation}\label{specvalpqk}
 \Phi_{P_{-1}}(x)=p_{-1}=0,\;
 \Phi_{P_{-2}}(x)=p_{-2}=\frac{1}{x}, \;\Phi_{C_0}(x)=q_0=2.
 \end{equation}
 Clearly
 \begin{eqnarray}\label{lowpq}
 p_{-1}=0 \prec p_0=p_1=q_1=1 \prec q_0=2, \;p_2=1+x\prec q_2=p_3=1+2x,\\
 \label{ineqpq}
 p_n \prec q_{n} \prec p_{n+1} \textrm{ for all integers } n\ge 3.
 \end{eqnarray}

 \begin{theo}\label{mainid2reg}  Let $i\le j$ be nonnegative
 integers.  Then
 \begin{equation}\label{mainid2reg1}
 \Phi_{C_i}(x)\Phi_{C_j}(x)-\Phi_{C_{i+j}}(x)=(-1)^i
 x^i\Phi_{C_{j-i}}(x).
 \end{equation}
 In particular, $\Phi_{C_i}(x)\Phi_{C_j}(x) \succ \Phi_{C_{i+j}}(x)$ if $i$
 is even, and $\Phi_{C_i}(x)\Phi_{C_j}(x) \prec \Phi_{C_{i+j}}(x)$ if $i$
 is odd.
 \end{theo}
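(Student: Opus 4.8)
The plan is to use the Binet-type closed form for the Lucas-like sequence $q_k=\Phi_{C_k}$. The recurrence (\ref{recrelqk}) has characteristic equation $t^2-t-x=0$; denote its two roots by $\alpha,\beta$, so that $\alpha+\beta=1$ and $\alpha\beta=-x$. Since $\alpha^k+\beta^k$ satisfies the same recurrence and agrees with $q_k$ at $k=0$ (value $2=q_0$) and at $k=1$ (value $\alpha+\beta=1=q_1$), we obtain $\Phi_{C_k}(x)=\alpha^k+\beta^k$ for every $k\ge 0$.

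First I would record this closed form and then simply expand the product:
\[
\Phi_{C_i}\Phi_{C_j}=(\alpha^i+\beta^i)(\alpha^j+\beta^j)=(\alpha^{i+j}+\beta^{i+j})+(\alpha\beta)^i(\alpha^{j-i}+\beta^{j-i}).
\]
Using $\alpha\beta=-x$ together with $\alpha^{i+j}+\beta^{i+j}=\Phi_{C_{i+j}}$ and $\alpha^{j-i}+\beta^{j-i}=\Phi_{C_{j-i}}$ (here $j-i\ge 0$, so the index is admissible), the right-hand side equals $\Phi_{C_{i+j}}+(-1)^i x^i\Phi_{C_{j-i}}$, which rearranges to exactly (\ref{mainid2reg1}).

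The main obstacle is purely formal: $1+4x$ is not a square in $\R[x]$, so $\alpha,\beta$ do not literally live in $\R[x]$. I would resolve this by reading (\ref{mainid2reg1}) as an identity of polynomials in the real variable $x$: for every real $x>-\tfrac14$ the roots $\alpha,\beta$ are real and the computation above is valid, so the two sides agree at infinitely many points and hence coincide as polynomials. (Equivalently, one may work in the quadratic extension $\R(x)[\sqrt{1+4x}]$.) A reader preferring to stay inside the recurrence framework can instead prove (\ref{mainid2reg1}) by induction on $i$: the cases $i=0,1$ are immediate from $q_0=2$, $q_1=1$ and the relation $q_{j+1}-q_j=xq_{j-1}$, while the inductive step substitutes $q_i=q_{i-1}+xq_{i-2}$ and $q_{i+j}=q_{i+j-1}+xq_{i+j-2}$ and collapses the two inherited expressions by means of $q_{j-i+1}-q_{j-i+2}=-xq_{j-i}$; one checks that all indices remain nonnegative because $j\ge i$.

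Finally, for the ``in particular'' clause I would invoke the cone $\R_+[x]$. The polynomial $\Phi_{C_{j-i}}$ has constant term $\phi(0,C_{j-i})=1$ and nonnegative coefficients, so $x^i\Phi_{C_{j-i}}\succ 0$. Hence the right-hand side of (\ref{mainid2reg1}) is $\succ 0$ when $i$ is even and $\prec 0$ when $i$ is odd, which is precisely the asserted comparison between $\Phi_{C_i}\Phi_{C_j}$ and $\Phi_{C_{i+j}}$.
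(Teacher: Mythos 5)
Your proof is correct, but your primary route is genuinely different from the paper's. The paper proves (\ref{mainid2reg1}) by induction on $i$ entirely inside $\R[x]$: the cases $i=0,1$ come from $q_0=2$, $q_1=1$ and the recurrence (\ref{recrelck}), and the inductive step writes $q_{l+1}=q_l+xq_{l-1}$, applies the hypothesis at $i=l$ and $i=l-1$, and collapses the result with $q_{j-l+1}-q_{j-l}=xq_{j-l-1}$ --- this is exactly the ``alternative'' induction you sketch at the end, and your index bookkeeping there is sound. Your main argument instead uses the Binet closed form $q_k=\alpha^k+\beta^k$ for the roots of $t^2-t-x=0$, which makes the identity a one-line expansion of $(\alpha^i+\beta^i)(\alpha^j+\beta^j)$ using $\alpha\beta=-x$; you correctly flag that $\alpha,\beta$ are not in $\R[x]$ and repair this either by evaluating at all real $x>-\tfrac14$ or by working in a quadratic extension. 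What the Binet route buys is transparency and immediate access to the whole family of Lucas-type identities (e.g.\ the analogues (\ref{pij1id})--(\ref{pij2id}) for paths follow the same way from $p_k=(\alpha^{k+1}-\beta^{k+1})/(\alpha-\beta)$); what the paper's induction buys is that it never leaves the polynomial cone framework in which the partial order $\succeq$ lives. Your derivation of the ``in particular'' clause from $q_{j-i}\succ 0$ matches the paper's. One harmless slip: when $i=j$ the constant term of $\Phi_{C_{j-i}}=q_0$ is $2$, not $\phi(0,C_0)=1$, by the paper's convention (\ref{specvalpqk}); this does not affect the conclusion, since $q_{j-i}\succ 0$ in all cases.
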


 \proof  We use the notation $q_k=\Phi_{C_k}$ for $k\ge 0$.
 The case $i=0$ follows immediately from
 $q_0=2$. The case $i=1$ follows from $q_1=1$ and the identity
 (\ref{recrelck}) for $k\ge 2$:
 $1q_j-q_{j+1}=q_j -(q_j+xq_{j-1})=-x q_{j-1}$.
 We prove the other cases of the theorem by induction on $i$.
 Assume that the theorem holds for $i\le l$, where $l\ge 1$.  Let
 $i=l+1$.  Then for $j\ge l+1$ use (\ref{recrelck}) for $k\ge 2$
 and the induction hypothesis for $i=l$ and $i=l-1$ to obtain:
 \begin{eqnarray*}
 &&q_{l+1}q_j-q_{l+1+j}=(q_l+xq_{l-1})q_j-(q_{l+j}+xq_{l-1+j})=\\
 &&q_lq_j-q_{l+j} +x(q_{l-1}q_j - q_{l-1+j})=
 (-1)^{l+1}x^l(-q_{j-l}+q_{j-l+1})=(-1)^{l+1}x^{l+1} q_{j-l-1}.
 \end{eqnarray*}
 Hence (\ref{mainid2reg1}) holds.
 Since $q_k \succ 0$ for $k\ge 0$ (\ref{mainid2reg1}) implies the
 second part of the theorem.  \qed

 \begin{theo}\label{uplowmatr=2}  Let $G$ be a $2$-regular graph on
 $n\ge 4$ vertices.
 Then
 \begin{eqnarray}
 &&\label{upmat4|n}
 \Phi_{G}(x)\preceq \Phi_{C_4}(x)^{\frac{n}{4}} \;\texttt{if } 4|n\\
 &&\label{upmat4|n-1}
 \Phi_{G}(x)\preceq \Phi_{C_4}(x)^{\frac{n-5}{4}} \Phi_{C_5}(x) \;\texttt{if }
 4|n-1,\\
 &&\label{upmat4|n-2}
 \Phi_{G}(x)\preceq \Phi_{C_4}(x)^{\frac{n-6}{4}}\Phi_{C_6}(x) \;\texttt{if }
 4|n-2,\\
 &&\label{upmat4|n-3}
 \Phi_{G}(x)\preceq \Phi_{C_4}(x)^{\frac{n-7}{4}}\Phi_{C_7}(x) \;\texttt{if }
 4|n-3,\\
 &&\label{lowmat3|n}
 \Phi_{G}(x)\succeq\Phi_{C_3}(x)^{\frac{n}{3}} \;\texttt{if } 3|n \\
 &&\label{lowmat3|n-1}
 \Phi_{G}(x)\succeq\Phi_{C_3}(x)^{\frac{n-4}{3}} \Phi_{C_4}(x)\;\texttt{if }
 3|n-1, \\
 &&\label{lowmat3|n-2}
 \Phi_{G}(x)\succeq \Phi_{C_3}(x)^{\frac{n-5}{3}}\Phi_{C_5}(x) \;\texttt{if }
 3|n-2.
 \end{eqnarray}
 Equalities in (\ref{upmat4|n}-\ref{upmat4|n-3}) hold if and only if $G$ is either a union of copies of
 $C_4$, or a union of copies of $C_4$ and a copy of $C_i$ for
 $i=5,6,7$, respectively.
 Equalities in (\ref{lowmat3|n}-\ref{lowmat3|n-2}) hold if and only if $G$ is either a union of copies of
 $C_3$, or a union of copies of $C_3$ and a copy of $C_i$ for
 $i=4,5$, respectively.

 Assume that $n$ is even and $G$ is a multi-bipartite $2$-regular graph.
 Then $\Phi_G(x)\succeq \Phi_{C_n}(x)$.  Equality holds if and
 only if $G=C_n$.

 \end{theo}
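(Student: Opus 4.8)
The plan is to prove all seven inequalities by the same mechanism: a $2$-regular graph is a disjoint union of cycles, so by the product rule (\ref{prodgenmatp}) we have $\Phi_G(x)=\prod_s \Phi_{C_{l_s}}(x)$ where $l_1+\cdots+l_t=n$ and each $l_s\ge 3$ (for a simple graph; for multigraphs $l_s\ge 2$). Theorem \ref{mainid2reg} is the essential tool: it lets us \emph{merge} or \emph{split} cycles while controlling the direction of the $\succ$ inequality. Specifically, rearranging (\ref{mainid2reg1}) gives, for even $i$, $\Phi_{C_{i+j}}(x)\prec \Phi_{C_i}(x)\Phi_{C_j}(x)$ and, for odd $i$, $\Phi_{C_{i+j}}(x)\succ \Phi_{C_i}(x)\Phi_{C_j}(x)$. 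I would first record the two atomic moves these imply: (a) splitting a cycle $C_{i+j}$ into $C_i\cup C_j$ strictly \emph{increases} $\Phi$ when $i$ is even; (b) merging $C_i\cup C_j$ into $C_{i+j}$ strictly \emph{increases} $\Phi$ when $i$ is odd. Because each move is strict unless no actual change occurs, tracking equality is straightforward.

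For the upper bounds (\ref{upmat4|n})--(\ref{upmat4|n-3}), the goal is to push every cycle length toward $4$. The key sub-steps are: given any cycle $C_l$ with $l\ge 6$, split off a $C_4$ using move (a) with $i=4$ (even), strictly increasing $\Phi$ and reducing to the case $C_{l-4}$; repeat until all remaining lengths lie in $\{3,4,5,6,7\}$. The residual small cycles must then be resolved: a single $C_3$ is impossible to leave unless paired, but I can absorb it, e.g. merge $C_3\cup C_3$ (move (b), $i=3$ odd) to get $C_6$ and handle it, or combine $C_3$ with other small cycles; the congruence class of $n$ mod $4$ dictates the unique terminal multiset ($(C_4)^{n/4}$, or that plus one $C_5$, $C_6$, or $C_7$). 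I would verify by a short finite check that among all cycle-decompositions with total length $n$, the specified terminal configuration is the unique maximizer, using the strictness in Theorem \ref{mainid2reg} to conclude equality holds iff $G$ already equals it. The lower bounds (\ref{lowmat3|n})--(\ref{lowmat3|n-2}) are dual: here I push every length toward $3$, using move (a) to split off $C_3$? — no: splitting off a $C_3$ uses $i=3$ (odd), which \emph{decreases} $\Phi$, exactly what a lower bound needs. So from any $C_l$ with $l\ge 4$ I repeatedly split off $C_3$ (strictly decreasing $\Phi$) until the leftover length lies in $\{3,4,5\}$, giving the terminal multisets $(C_3)^{n/3}$, that plus $C_4$, or that plus $C_5$ according to $n\bmod 3$.

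For the final multi-bipartite assertion, bipartiteness forces every cycle to have \emph{even} length, and $n$ even is compatible with the single cycle $C_n$. Here I want a lower bound, and the relevant move is: merging two even cycles $C_i\cup C_j$ with $i$ even into $C_{i+j}$ \emph{decreases} $\Phi$ by move (a) read backwards (split increases, so merge decreases). Since all parts are even, I can repeatedly merge any two components, each merge strictly decreasing $\Phi$, until a single cycle $C_n$ remains; thus $\Phi_G\succeq\Phi_{C_n}$ with equality iff no merge was needed, i.e. $G=C_n$. (For multigraphs the smallest even cycle is $C_2=H_2$, and the argument is unchanged since $C_2$ has even length.)

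The main obstacle I anticipate is the \textbf{terminal-multiset and equality bookkeeping} in the seven bounds, not the inductive moves themselves. The subtlety is that the atomic moves only compare a decomposition to an adjacent one, so I must argue that iterating them actually reaches the claimed extremal configuration and that \emph{every} non-extremal decomposition admits a strictly improving move — this requires checking that the small residual cases ($l\in\{3,5,6,7\}$ for the upper bound, $l\in\{4,5\}$ for the lower) cannot be further improved in the wrong direction and genuinely correspond to the stated unique optimizer. I would handle this by a clean case analysis on $n\bmod 4$ (resp. $n\bmod 3$) together with a lemma asserting that any two distinct cycle multisets of the same total length are connected by a monotone sequence of moves, so the unique fixed point of the improving dynamics is the claimed graph; strictness in Theorem \ref{mainid2reg} then yields the ``if and only if'' in each equality statement.
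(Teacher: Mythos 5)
Your skeleton is indeed the paper's: factor $\Phi_G=\prod_s q_{l_s}$ via (\ref{prodgenmatp}), read Theorem \ref{mainid2reg} as merge/split moves whose direction depends on parity, drive the multiset toward all-$C_4$ (upper bound) or all-$C_3$ (lower bound), and extract the equality cases from strictness; your multi-bipartite paragraph is correct and essentially verbatim the paper's argument. However, there are genuine problems in the reduction itself. First, your thresholds are wrong: splitting off a $C_4$ increases $\Phi$ only for $l\ge 8$, since for $l=7$ the identity (\ref{mainid2reg1}) with $i=3$ gives $q_3q_4-q_7=-x^3\prec 0$ (the split \emph{decreases} $\Phi$), and for $l=6$ the split produces the non-simple $C_2$; similarly "split off $C_3$ from any $C_l$ with $l\ge 4$" is unavailable for $l=4,5$ (it would leave $C_1$ or $C_2$), and the correct range is $l\ge 6$. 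These slips matter because they determine exactly which residual configurations the final case analysis must settle.

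The deeper gap is your proposed finishing lemma, that any two cycle multisets of the same total length are connected by a monotone sequence of atomic moves: this is false, so your improving dynamics has non-extremal fixed points. Concretely, from $\{C_6,C_6\}$ every legal move among simple cycles goes the wrong way for the upper bound: merging gives $q_6^2=q_{12}+2x^6\succ q_{12}$ (even merges decrease), splitting $C_6$ as $C_3\cup C_3$ decreases (odd split), and $C_2\cup C_4$ is not simple — yet the theorem requires $q_6^2\prec q_4^3$, which the paper obtains only by comparing remainders: $q_4^3=q_{12}+3x^4q_4\succ q_{12}+2x^6=q_6^2$. The same obstruction occurs at $q_6q_7\prec q_4^2q_5$ (needed for the case $4|n-1$, e.g. $n=13$) and at $q_5^2\succ q_3^2q_4$ for the lower bound (from $\{C_5,C_5\}$ the only move is the odd merge $q_5^2\prec q_{10}$, which increases); the paper settles it via $q_5^2=q_{10}-2x^5=q_3q_7+x^3q_4-2x^5\succ q_3q_7\succ q_3^2q_4$. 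So the "short finite check" you defer is not bookkeeping but the core of the proof: it consists of the explicit identity computations $q_3q_4\prec q_7$, $q_3q_6\prec q_9\prec q_4q_5$, $q_5q_6\prec q_{11}\prec q_4q_7$, $q_4^2q_5\succ q_6q_7$, $q_4^2\succ q_8\succ q_3q_5$, and $q_4q_5\succ q_9\succ q_3^3$, several of which are not reachable by any monotone chain of merges and splits within simple graphs. Supply those computations (as the paper does from (\ref{mainid2reg1})) and your argument closes; without them it does not.
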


 \proof   Recall that any $2$-regular graph $G$ is a union of
 cycles of order $3$ at least.
 Use (\ref{prodgenmatp}) to deduce that the matching polynomial of $G$ is
 the product of the matching polynomials of the corresponding
 cycles.

 We discuss first the upper bounds on $\Phi_G$.
 If $C_i$ and $C_j$ are two odd cycle Theorem \ref{mainid2reg}
 yields that $q_i q_j \prec q_{i+j}$, where $C_{i+j}$ is an even
 cycle.  To find the upper bound on $\Phi_G$ we may assume that $G$ contains
 at most one odd cycle.  For all cycles $C_l$, where $l\ge 8$
 Theorem \ref{mainid2reg} yields
 the inequality $q_l \prec q_4 q_{l-4}$.  Use repeatedly this inequality, until we
 replaced the products of different $q_l$ with products involving
 $q_4$,$q_6$ and perhaps one factor of the form $q_i$ where $i\in
 \{3,5,7\}$.   Use (\ref{mainid2reg1}) to obtain the inequality:
 \[q_4^3=q_4(q_8+2x^4)=q_{12}+3x^4q_4\succ q_{12} +2x^6 = q_6^2.\]
 Hence we may assume that $G$ contains
 at most one cycle of length $6$.  If $n$ is even we deduce
 that we do not have a factor corresponding to an odd cycle,
 and we obtain the inequalities (\ref{upmat4|n}) and
 (\ref{upmat4|n-2}).  Assume that $n$ is odd.  Use (\ref{mainid2reg1})
 to deduce
 \begin{eqnarray*}
 &&q_3q_4 \prec q_7,\; q_3 q_6 \prec q_9 \prec q_4 q_5,\;q_5 q_6
 \prec q_{11} \prec q_4 q_7,\\
 && q_4^2 q_5=q_4(q_9+x^4)=q_{13}+x^4q_5+x^4q_4 \succ
 q_{13}+x^6= q_6q_7.
 \end{eqnarray*}
 These inequalities yield (\ref{upmat4|n-1}) and
 (\ref{upmat4|n-3}).  Equality in (\ref{upmat4|n}-\ref{upmat4|n-3})
 if and only if we did not apply Theorem \ref{mainid2reg} at all.

 We discuss second the lower bounds on $\Phi_G$.
 If $l\ge 6$ then we use the inequality $q_l \succ q_3 q_{l-3}$.
 Use repeatedly this inequality, until we
 replaced the products of different $q_l$ with products involving
 $q_3$,$q_4$ and $q_5$.  As
 $$q_4^2 \succ q_8 \succ q_3 q_5,\;q_4q_5 \succ q_9 \succ q_3^3,\;
 q_5^2=q_{10}-2x^5=q_3q_7+x^3q_4-2x^5 \succ q_3 q_7 \succ
 q_3^2q_4,$$
 we deduce (\ref{lowmat3|n}-\ref{lowmat3|n-2}).
 Equalities hold if we did not apply Theorem \ref{mainid2reg} at all.

 Assume finally that $G$ is a $2$-multi regular bipartite graph on $n$ vertices.
 Then $G$ is a union of even cycles $C_{2i}$ for $i\in\N$.  Assume that $C_i$ and $C_j$
 are even cycles.  Then Theorem \ref{mainid2reg} yields that
 $q_iq_j \succ q_{i+j}$.  Continue this process until we deduce
 that $\Phi_G \succeq q_n$.  Equality holds if and only if
 $G=C_n$.  \qed

 Use Theorem \ref{uplowmatr=2} and Theorem \ref{mainid2reg} for $i=2$ to
 deduce.

 \begin{corol}\label{upbd2mulreg}

 \noindent
 \begin{itemize}
 \item Let $G$ be a simple $2$-regular graph on $4q$ vertices.
 Then $\Phi_G\preceq \Phi_{qK_{2,2}}$.  Equality holds if and only
 if $G=qK_{2,2}$.

 \item
 Let $G$ be a $2$-multi regular
 graph on $2n$ vertices.  Then $\Phi_G\preceq \Phi_{nH_2}$.
 Equality holds if and only if $G=nH_2$.
 \end{itemize}
 \end{corol}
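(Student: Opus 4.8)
The plan is to read off both bullets from Theorems \ref{mainid2reg} and \ref{uplowmatr=2}, essentially for free in the first case and after one short reduction in the second. For the first bullet, the key observation is that $K_{2,2}=C_4$, so $qK_{2,2}=qC_4$ and, by the product rule (\ref{prodgenmatp}), $\Phi_{qK_{2,2}}=\Phi_{C_4}(x)^{q}$. Since a simple $2$-regular $G$ on $4q$ vertices has $4\mid n$ with $n=4q$, inequality (\ref{upmat4|n}) of Theorem \ref{uplowmatr=2} is exactly $\Phi_G\preceq\Phi_{C_4}(x)^{q}=\Phi_{qK_{2,2}}$, and the equality clause there says equality forces $G$ to be a disjoint union of copies of $C_4$, i.e. $G=qK_{2,2}$. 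So nothing beyond Theorem \ref{uplowmatr=2} is needed here.

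For the second bullet I write $q_k:=\Phi_{C_k}$ as in the proofs above. A $2$-regular multigraph $G$ on $2n$ vertices is a disjoint union of cycles $C_{k_1},\dots,C_{k_s}$ with $\sum_i k_i=2n$ and each $k_i\ge 2$, so $\Phi_G=\prod_i q_{k_i}$ and the target is $\prod_i q_{k_i}\preceq q_2^{\,n}=\Phi_{nH_2}$. The engine is Theorem \ref{mainid2reg} with $i=2$, namely $q_2q_j-q_{j+2}=x^2q_{j-2}$ for $j\ge 2$; since $q_{j-2}\succ0$ this gives $q_{j+2}\prec q_2q_j$. First I would show by induction on even $k$ that $q_k\preceq q_2^{\,k/2}$, with equality iff $k=2$: the base $k=2$ is an equality, and for even $k\ge 4$ one has $q_k\prec q_2q_{k-2}\preceq q_2\cdot q_2^{(k-2)/2}=q_2^{\,k/2}$.

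It remains to absorb the odd cycles. Because $\sum_i k_i=2n$ is even, the odd lengths occur an even number of times, so I pair them off. For two odd lengths $a\le b$ (each $\ge 3$, hence $a+b\ge 6$ and even), Theorem \ref{mainid2reg} with $i=a$ odd gives $q_aq_b=q_{a+b}-x^aq_{b-a}\prec q_{a+b}$, and the even-cycle bound then yields $q_{a+b}\prec q_2^{(a+b)/2}$, so $q_aq_b\prec q_2^{(a+b)/2}$. Multiplying the even-cycle bounds over the even $k_i$ and the pair bounds over the paired odd cycles --- legitimate because $\succeq$ is multiplicative on $\R_+[x]$ and becomes strict as soon as one factor is strict --- collapses $\prod_i q_{k_i}$ to $\preceq q_2^{\,n}$, and equality forces every factor to be tight, i.e. no odd cycles and every even cycle equal to $C_2$, which is exactly $G=nH_2$. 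The one genuinely nontrivial point, and the step I expect to be the main obstacle, is this odd-cycle handling: a lone odd cycle cannot be compared to a power of $q_2$ since its length is odd, and the $i$-odd case of Theorem \ref{mainid2reg} runs the \emph{wrong} way ($q_aq_b\prec q_{a+b}$); the fix is to pair odd cycles so their total length is even and route the estimate through $q_{a+b}$. Everything else is routine bookkeeping with the multiplicativity of the order $\succeq$.
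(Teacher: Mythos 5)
Your proof is correct and follows exactly the route the paper intends: the first bullet is inequality (\ref{upmat4|n}) of Theorem \ref{uplowmatr=2} read off via $K_{2,2}=C_4$, and the second bullet is Theorem \ref{mainid2reg} with $i=2$ giving $q_{j+2}\prec q_2q_j$, which the paper cites without spelling out the details. Your explicit handling of odd cycles by pairing them (so that $q_aq_b\prec q_{a+b}\prec q_2^{(a+b)/2}$) is the same device used inside the proof of Theorem \ref{uplowmatr=2} and correctly fills the one step the paper leaves implicit.
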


 Note that the above results
 verify all the claims we stated about $2$-regular bipartite graphs
 in the Introduction.

 \section{Graphs of degree at most 2}\label{sec:deg2atmst}

 Denote by $\Omega(n,k)\subset \Omega_\textrm{mult}(n,k)$ the set of simple graphs
 and multigraphs on $n$ vertices respectively, which have $0<2k$
 vertices of degree $1$ and the rest vertices have degree degree $2$.
 The following proposition is straightforward.

 \begin{prop}\label{stromnk}

 \noindent
 \begin{itemize}
 \item
 Each $G\in\Omega(n,k)$ is a union of $k$ paths and possibly cycles $C_i$ for $i\ge
 3$.
 \item
 Each $G\in\Omega_\textrm{mult}(n,k)$ is a union of $k$ paths and possibly cycles $C_i$ for $i\ge
 2$.

 \end{itemize}
 $\Omega_\textrm{mult}(n,k)\backslash \Omega(n,k)\ne \emptyset$ if and only if
 $n-2k\ge 2$.

 \end{prop}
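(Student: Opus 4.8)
The plan is to reduce both bullets to the classical fact that a graph in which every vertex has degree at most $2$ is a disjoint union of paths and cycles. Since every $G\in\Omega_\textrm{mult}(n,k)$ (and in particular every $G\in\Omega(n,k)$) has all degrees in $\{1,2\}$, I would first establish this decomposition by a direct traversal argument: starting at any vertex and following edges, a vertex of degree $2$ forces a unique continuation, so each component either terminates at two vertices of degree $1$ (a path) or closes up with every vertex of degree $2$ (a cycle). No component can be a single isolated vertex, since degree $0$ is excluded.

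Next I would count the degree-$1$ vertices. A path component has exactly two of them (its endpoints), while a cycle component has none. Hence the number of path components is exactly half the number of degree-$1$ vertices, namely $k$, and the $n-2k$ vertices of degree $2$ occur either as interior vertices of these $k$ paths or as vertices of the cycle components. This yields both bullets simultaneously; the only distinction is the admissible cycle lengths. In a simple graph a cycle uses at least three distinct vertices, giving $C_i$ with $i\ge 3$; in a multigraph a double edge between two vertices is an admissible cycle $C_2=H_2$ (loops $C_1$ being disallowed by convention), giving $i\ge 2$.

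For the equivalence I would observe that $G$ is non-simple precisely when it contains a multiple edge, and that under the degree constraint the only place a multiple edge can occur is a $C_2$ component: a path is simple, and every cycle $C_i$ with $i\ge 3$ is simple. Thus $\Omega_\textrm{mult}(n,k)\setminus\Omega(n,k)\ne\emptyset$ exactly when some member contains a $C_2$, which consumes two degree-$2$ vertices. For the backward direction, assuming $n-2k\ge 2$ and using $k\ge 1$ (guaranteed by $0<2k$), I would exhibit such a graph: let one path $P_{n-2k}$ absorb all but two of the degree-$2$ vertices together with two degree-$1$ endpoints, let the remaining $k-1$ paths be single edges $P_2$, and let the last two degree-$2$ vertices form a $C_2$; a vertex count ($(n-2k)+2(k-1)+2=n$, with $2k$ vertices of degree $1$ and $n-2k$ of degree $2$) confirms membership in $\Omega_\textrm{mult}(n,k)\setminus\Omega(n,k)$. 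For the forward direction I would rule out a $C_2$ when $n-2k\le 1$: if $n-2k=0$ there are no degree-$2$ vertices and $G$ is a perfect matching of $k$ edges, while if $n-2k=1$ the single degree-$2$ vertex cannot lie on any cycle and so must be an interior path vertex.

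The step requiring the most care is this last case $n-2k=1$: I must verify that the lone degree-$2$ vertex genuinely cannot create a multiple edge. This hinges on the convention that loops are excluded (otherwise a single vertex could carry a loop $C_1$) and on the fact, already used above, that under the degree bound a multiple edge can arise only inside a $C_2$ component, which needs two vertices.
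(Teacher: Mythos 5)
Your proof is correct and complete; the paper itself gives no argument at all (it simply declares Proposition \ref{stromnk} ``straightforward''), and your decomposition of a max-degree-$2$ graph into paths and cycles, the count of path components via the $2k$ degree-one vertices, and the observation that a multiple edge can only occur as a $C_2$ component (hence needs two degree-$2$ vertices) is exactly the routine argument being omitted. The one point you rightly flag --- that loops are excluded --- is consistent with the paper's conventions, since its list of admissible cycles in the multigraph case starts at $C_2$ (identified with $H_2$).
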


 Denote by $\Pi(n,k)\subseteq \Omega(n,k)$ the subset of graphs $G$
 on $n$ vertices which are union of $k$-paths.
 Note that $\Pi(2k,k)=kP_2$.
 As in \S\ref{sec:2reg} we study
 the minimum and maximum m-matchings in $\Pi(n,k),\Omega(n,k),
 \Omega_\textrm{mult}(n,k)$.

 We first study the case where $G\in \Pi(n,4)$, i.e. $G$ is a union of two paths
 with the total number of vertices equal to $n$.

  \begin{lemma}\label{pathorder}
  Let $n\ge 4$.  Then
  \begin{itemize}
  \item
  If $n=0,1$ mod $4$ then
   \begin{eqnarray}\label{2path01m4}
   p_{n-1}=p_1 p_{n-1}\prec p_3 p_{n-3} \prec \dots \prec
   p_{2\lfloor\frac{n}{4}\rfloor-1}p_{n-2\lfloor\frac{n}{4}\rfloor+1} \prec \\
   p_{2\lfloor\frac{n}{4}\rfloor}p_{{n-2\lfloor\frac{n}{4}\rfloor}}
   \prec p_{2\lfloor\frac{n}{4}\rfloor-2}p_{{n-2\lfloor\frac{n}{4}\rfloor}+2}\prec
   \dots \prec p_{2}p_{n-2} \prec
   p_0 p_n=p_{n}. \nonumber
   \end{eqnarray}
   \item
    If $n=2,3$ mod $4$ then
   \begin{eqnarray}\label{2path23m4}
   p_{n-1}=p_1 p_{n-1}\prec p_3 p_{n-3} \prec \dots \prec
   p_{2\lfloor\frac{n}{4}\rfloor+1}p_{n-2\lfloor\frac{n}{4}\rfloor-1} \prec  \\
   p_{2\lfloor\frac{n}{4}\rfloor}p_{{n-2\lfloor\frac{n}{4}\rfloor}}\prec
   p_{2\lfloor\frac{n}{4}\rfloor-2}p_{{n-2\lfloor\frac{n}{4}\rfloor}+2}\prec
   \dots \prec p_{2}p_{n-2} \prec
   p_0 p_n=p_{n}. \nonumber
   \end{eqnarray}

  \end{itemize}

  \end{lemma}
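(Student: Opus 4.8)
The plan is to reduce both chains to a single bilinear identity for the path polynomials $p_k$ — the path analogue of Theorem~\ref{mainid2reg} — and then to read off every consecutive $\prec$-step in (\ref{2path01m4}) and (\ref{2path23m4}) as the assertion that a monomial times a single $p_\ell$ lies in $\R_+[x]$. Set $F(a,b):=p_ap_b-p_{a-1}p_{b+1}$. Substituting the recursion (\ref{recrelpk}) in the form $p_a=p_{a-1}+xp_{a-2}$ and using $p_b-p_{b+1}=-xp_{b-1}$, the cross terms collapse and one obtains the reduction $F(a,b)=-x\,F(a-1,b-1)$. Since $F(0,b)=p_b$ (because $p_{-1}=0$ by (\ref{specvalpqk})), induction on $a$ gives
\[
 p_ap_b-p_{a-1}p_{b+1}=(-x)^a\,p_{b-a},\qquad 0\le a\le b+1 .
\]
Adding this to its shifted copy $F(a-1,b+1)=(-x)^{a-1}p_{b-a+2}$ and simplifying once more with (\ref{recrelpk}) (via $p_{b-a+2}-x\,p_{b-a}=p_{b-a+1}$) yields the companion identity
\[
 p_ap_b-p_{a-2}p_{b+2}=(-x)^{a-1}\,p_{b-a+1}.
\]

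Next I would use these two identities to evaluate each elementary difference occurring in the two chains. For the increasing run over the odd splits, the shift-$2$ identity gives $p_{2j+1}p_{n-2j-1}-p_{2j-1}p_{n-2j+1}=x^{2j}p_{n-4j-1}$; for the decreasing run over the even splits it gives $p_{2j-2}p_{n-2j+2}-p_{2j}p_{n-2j}=x^{2j-1}p_{n-4j+1}$. The single ``bridge'' step joining the top of the odd run to the top of the even run is handled by the shift-$1$ identity: writing $m=\lfloor n/4\rfloor$, it produces $p_{2m}p_{n-2m}-p_{2m-1}p_{n-2m+1}=x^{2m}p_{n-4m}$ when $n\equiv 0,1 \bmod 4$, and $p_{2m}p_{n-2m}-p_{2m+1}p_{n-2m-1}=x^{2m+1}p_{n-4m-2}$ when $n\equiv 2,3 \bmod 4$. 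In every case the right-hand side is a nonnegative power of $x$ times one $p_\ell$, which belongs to $\R_+[x]$ and is nonzero — hence $\succ 0$ — exactly when $\ell\ge 0$, since $p_\ell\succ 0$ for $\ell\ge 0$ while $p_{-1}=0$.

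It then remains to check that the index $\ell$ arising at each step stays nonnegative throughout the relevant range of $j$, and that the odd and even splits listed in (\ref{2path01m4})–(\ref{2path23m4}) exhaust all pairs $\{i,n-i\}$ with $0\le i\le\lfloor n/2\rfloor$. Writing $n=4t+c$ with $c\in\{0,1,2,3\}$, so that $m=t$, one verifies directly that the arguments $n-4j-1$, $n-4j+1$, $n-4m$ and $n-4m-2$ remain $\ge 0$ on the ranges dictated by each residue class, making every difference strictly $\succ 0$; transitivity of $\succ$ (valid because $\R_+[x]+\R_+[x]=\R_+[x]$ and the sum of a nonzero and a nonnegative element stays nonzero) then concatenates these individual comparisons into the full chains. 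The only genuinely delicate point is this parity/floor bookkeeping: locating the crossover term $p_{2m}p_{n-2m}$ correctly relative to the top of the odd run in each of the four residue classes, and confirming that the four listed families of splits together cover every split exactly once so that the chains really are total orders.
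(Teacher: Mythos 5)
Your proposal is correct and follows essentially the same route as the paper: your two identities $p_ap_b-p_{a-1}p_{b+1}=(-x)^ap_{b-a}$ and $p_ap_b-p_{a-2}p_{b+2}=(-x)^{a-1}p_{b-a+1}$ are exactly the paper's (\ref{pij1id}) and (\ref{pij2id}), derived by the same telescoping of the recursion (\ref{recrelpk}) and applied in the same way (shift-$2$ steps within each run, shift-$1$ step at the bridge). The residue-class bookkeeping you flag as delicate is handled in the paper simply by the sign of $(-1)^{i-2}$ together with the constraint $j\ge i$, which keeps every index of $p_\ell$ nonnegative.
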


  \begin{proof}
  Let $0\leq i,j$ and consider the path $P_{i+j}$.
  By considering the generating matching polynomial without the
  match $(i,i+1)$ and with match $(i,i+1)$ we get the identity
  \begin{equation}\label{pathsplt}
  p_{i+j}=p_{i}p_{j}+xp_{i-1}p_{j-1}
  \end{equation}
  Hence
  $p_{i+j}= p_{i-1}p_{j+1}+xp_{i-2}p_j$.
  Subtracting from this equation (\ref{pathsplt}) we obtain
  $p_{i-1}p_{j+1} - p_ip_j=-x(p_{i-2}p_{j}-p_{i-1}p_{j-1})$.
  Assume that $i\le j-2$.
  Continuing this process $i-1$ times, and taking in account that
  $p_{-1}=0,\; p_{-2}=\frac{1}{x}$ we get

  \begin{equation}\label{pij1id}
  p_{i-1}p_{j+1} - p_ip_j=(-1)^{i-1}x^ip_{j-i} \textrm{ for } 0\le i\le j+2.
  \end{equation}
  Hence  $p_{i-2}p_{j+2} - p_{i-1}p_{j+1}=(-1)^{i-2}x^{i-1}p_{j-i+2}$.
  Add this equation to the previous one and use (\ref{recrelpk}) to
  obtain
 \begin{equation}\label{pij2id}
 p_{i-2}p_{j+2} - p_ip_j=
 (-1)^{i-2}x^{i-1}p_{j-i+1} \textrm{ for } 1\le i\le j+2.
 \end{equation}

 We now prove (\ref{2path01m4}-\ref{2path23m4}).  In (\ref{pij2id})
 assume that $i\ge 3$ is odd and $j\ge i$.  So $(-1)^{i-2}=-1$.
 Hence $p_{i-2}p_{j+2} - p_ip_j \prec 0$.  This explains the ordering of the polynomials
 appearing in the first
 line of (\ref{2path01m4}-\ref{2path23m4}).  Assume now that $i\ge 2$
 is even and $j\ge i$.  So $(-1)^{i-2}=1$.  Hence $p_{i-2}p_{j+2} - p_ip_j \succ 0$.
 This explains the ordering of the polynomials
 appearing in the second line
 line of (\ref{2path01m4}-\ref{2path23m4}).

 The last inequality in the first line of (\ref{2path01m4}-\ref{2path23m4})
 is implied by (\ref{pij1id}).

 \end{proof}
 \qed

 \begin{theo}\label{maxminpath}  Let $k\ge 2, n\ge 2k$.  Then for
 any $G\in \Pi(n,k)$
 \begin{equation}\label{loupmgpin}
 \Phi_J\preceq \Phi_G \preceq \Phi_K.
 \end{equation}
 Equality in the left-hand side and right-hand side holds if and only if
 $G=J$ and $G=K$ respectively.  Here $K=(k-1)P_2\cup P_{n-2k+2}$  and  $J$ is defined
 as follows:
 \begin{enumerate}
 \item If $n\le 3k$ then $J=(3k-n)P_2\cup (n-2k)P_3
 $.

 \item  If $n> 3k$ then $J=(k-1)P_3\cup
 P_{n-3k+3}$.

 \end{enumerate}

 \end{theo}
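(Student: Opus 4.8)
The plan is to reduce everything to the two‑path comparison of Lemma~\ref{pathorder} by local exchange moves, exploiting that the order $\succeq$ is preserved under multiplication by a fixed positive factor (the $\R_+[x]$ properties recorded in \S\ref{sec:2reg}). Write $G=P_{a_1}\cup\cdots\cup P_{a_k}$ with each $a_i\ge 2$ and $\sum_i a_i=n$, so that $\Phi_G=\prod_{i=1}^k p_{a_i}$ by (\ref{prodgenmatp}). From Lemma~\ref{pathorder}, for a fixed total $m=a+b$ with $a,b\ge 2$ the product $p_a p_b$ is largest exactly at the split $(2,m-2)$ and smallest exactly at the split $(3,m-3)$ when $m\ge 6$ (for $m=4,5$ there is a single admissible split). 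Multiplying by the untouched factors $R=\prod_{\text{others}}p_{a_i}\succ 0$ and using that $g_1\succ f_1\succ 0$ forces $g_1R\succ f_1R$, every such local replacement lifts to a strict comparison of the full polynomials $\Phi_G$.

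For the upper bound I would use the move that replaces two components of lengths $a,b\ge 3$ by $P_2\cup P_{a+b-2}$. Since $a,b\ge 3$, the source split is not the maximizing split $(2,m-2)$ of its total $m\ge 6$, so each move strictly increases $\Phi_G$ in the order $\succ$. The number of components of length $\ge 3$ strictly drops at each move, so the process terminates at the unique configuration having at most one such component, namely $(k-1)P_2\cup P_{n-2k+2}=K$. As every $G\ne K$ admits such a move, $\Phi_G\prec\Phi_K$, with equality iff $G=K$.

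For the lower bound I would use two moves: move (A) replaces two components of lengths $a,b\ge 4$ by $P_3\cup P_{a+b-3}$, and move (B) replaces a $P_2$ together with a component $P_b$, $b\ge 4$, by $P_3\cup P_{b-1}$. In each case the source is not the minimizing split $(3,m-3)$ of its total $m\ge 6$, so both moves strictly decrease $\Phi_G$. For termination, order configurations lexicographically by the pair (number of components of length $\ge 4$, number of $P_2$'s): move (A) strictly drops the first coordinate, while move (B) either drops the first coordinate (when $b=4$) or keeps it and strictly drops the second; crucially, neither move ever creates a new $P_2$. A configuration is terminal iff no two components have length $\ge 4$ and no $P_2$ coexists with a component of length $\ge 4$. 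I would then identify the terminal configurations with $J$: if some component has length $\ge 4$ then there are no $P_2$'s and all other components are $P_3$'s, forcing $(k-1)P_3\cup P_{n-3k+3}$, admissible exactly when $n>3k$; otherwise every component is a $P_2$ or $P_3$, and the counts are forced to be $3k-n$ twos and $n-2k$ threes, admissible exactly when $n\le 3k$. Hence the process always ends at the unique graph $J$, giving $\Phi_J\prec\Phi_G$ for every $G\ne J$.

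The main obstacle is the lower bound. Unlike the upper bound, its extremal graph changes form at $n=3k$, so the two moves must be arranged so that their common terminal configurations are precisely the two branches of $J$ and nothing else. The delicate points are checking that moves (A) and (B) never manufacture a $P_2$ (so the lexicographic monovariant genuinely decreases and the process cannot stall away from $J$) and dealing with the small totals $m=4,5$, where $(3,m-3)$ is inadmissible; in those cases the pair of components involved already has a unique admissible split, so no move is called for and they cause no difficulty.
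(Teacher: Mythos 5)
Your proposal is correct and takes essentially the same approach as the paper: the paper's own proof is just the two-line remark that the $k=2$ case is Lemma \ref{pathorder} and that for $k>2$ one "applies the theorem for $k=2$ to any two paths in $G$," which is exactly the pairwise exchange argument you carry out. Your write-up merely supplies the details (the explicit moves, the monovariants guaranteeing termination, and the identification of the terminal configurations with $J$ and $K$) that the paper leaves implicit.
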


 \begin{proof}
 For $k=2$ the theorem follows from Lemma
 \ref{pathorder}.  For $k>2$ apply the theorem for $k=2$ for any
 two paths in $G\in \Pi(n,k)$ to deduce that $K$ and $J$ are the maximal and the minimal
 graphs respectively.

 \end{proof}
 \qed

  We extend the result of Lemma \ref{pathorder} for cycles.

 \begin{lemma}\label{cycleorder}

 Let $n\ge 4$.  Then
  \begin{itemize}
  \item If $n=0,1$ mod $4$ then
   \begin{eqnarray}\label{2cycle01m4}
   q_{n-1}=q_1 q_{n-1}\prec q_3 q_{n-3} \prec \dots \prec
   q_{2\lfloor\frac{n}{4}\rfloor-1}q_{n-2\lfloor\frac{n}{4}\rfloor+1} \prec \\
   q_{2\lfloor\frac{n}{4}\rfloor}q_{{n-2\lfloor\frac{n}{4}\rfloor}}
   \prec q_{2\lfloor\frac{n}{4}\rfloor-2}q_{{n-2\lfloor\frac{n}{4}\rfloor}+2}\prec
   \dots \prec q_{2}q_{n-2} \prec
   q_{n+1}. \nonumber
   \end{eqnarray}
   \item If $n=2,3$ mod $4$ then
   \begin{eqnarray}\label{2cycle23m4}
   q_{n-1}=q_1 q_{n-1}\prec q_3 q_{n-3} \prec \dots \prec
   q_{2\lfloor\frac{n}{4}\rfloor+1}q_{n-2\lfloor\frac{n}{4}\rfloor-1} \prec  \\
   q_{2\lfloor\frac{n}{4}\rfloor}q_{{n-2\lfloor\frac{n}{4}\rfloor}}\prec
   q_{2\lfloor\frac{n}{4}\rfloor-2}q_{{n-2\lfloor\frac{n}{4}\rfloor}+2}\prec
   \dots \prec q_{2}q_{n-2} \prec
   q_{n+1}. \nonumber
   \end{eqnarray}

  \end{itemize}

 \end {lemma}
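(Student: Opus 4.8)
The plan is to mirror the proof of Lemma \ref{pathorder}, except that now the master identity of Theorem \ref{mainid2reg} does the work that the telescoped identities (\ref{pij1id})--(\ref{pij2id}) did for paths. The starting observation is that for every $i$ with $0\le i\le\lfloor n/2\rfloor$, Theorem \ref{mainid2reg} applied to the pair $i\le n-i$ gives
\[ q_iq_{n-i}=q_n+(-1)^ix^iq_{n-2i}. \]
Thus every product appearing in the two chains equals the common term $q_n$ plus a \emph{correction} $c_i:=(-1)^ix^iq_{n-2i}$, and comparing any two products is the same as comparing their corrections. Since $q_{n-2i}\succ0$ whenever $n-2i\ge0$, we have $c_i\prec0$ for odd $i$ and $c_i\succ0$ for even $i$; in particular every product with an odd smaller index lies below $q_n$ and every product with an even smaller index lies above $q_n$. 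This already separates the first (odd) and second (even) displayed lines.

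The heart of the argument is a single coefficientwise inequality governing the order \emph{within} each parity class. Applying the recursion (\ref{recrelqk}) twice (with $q_0=2,\;q_1=1$ from (\ref{specvalpqk}) covering the boundary) yields the identity
\[ q_m-x^2q_{m-4}=q_{m-1}+xq_{m-3}\succ0\qquad(m\ge4). \]
For two consecutive products in the same parity class, with smaller indices $i$ and $i+2$ and $m=n-2i$, one computes $c_{i+2}-c_i=(-1)^ix^i\bigl(x^2q_{m-4}-q_m\bigr)$, so the identity above forces $c_{i+2}\succ c_i$ when $i$ is odd and $c_{i+2}\prec c_i$ when $i$ is even. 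This is exactly the monotonicity asserted in the first line (odd indices increasing) and the second line (even indices decreasing) of (\ref{2cycle01m4})--(\ref{2cycle23m4}). A short check of the index ranges in each residue class of $n$ modulo $4$ confirms that the relevant $m=n-2i$ is always at least $4$, so the identity legitimately applies throughout both chains.

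It remains to glue the two lines together and to handle the top term. The transition step compares the largest odd-index product with the even-index product opening the second line; here the two corrections have \emph{opposite} signs, so their difference is a sum $x^{a}q_{b}+x^{a'}q_{b'}$ of two polynomials with nonnegative coefficients, hence $\succ0$, and this settles both parities of $n$ uniformly. For the final inequality I would use (\ref{recrelqk}) to write $q_{n+1}=q_n+xq_{n-1}$ and compare with $q_2q_{n-2}=q_n+x^2q_{n-4}$, reducing it to $q_{n-1}\succ xq_{n-4}$; expanding $q_{n-1}=q_{n-2}+xq_{n-4}+x^2q_{n-5}$ for $n\ge5$ (and checking $n=4$ directly via $q_3-xq_0=1+x$) makes this immediate. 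Strictness of every $\prec$ is clear, since each correction difference is a nonzero polynomial with nonnegative coefficients.

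The only genuinely delicate point — the main obstacle — is the bookkeeping of the boundary indices near the peak $i\approx n/2$ across the four residues of $n$ modulo $4$, so that the key identity $q_m-x^2q_{m-4}=q_{m-1}+xq_{m-3}$ is never invoked with $m<4$ and the special values in (\ref{specvalpqk}), namely $q_0=2$ and $q_{-1}=0$, are correctly applied at the ends of the chains. Everything else is a direct transcription of the structure already established for paths in Lemma \ref{pathorder}.
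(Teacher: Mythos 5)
Your proof is correct and follows essentially the same route as the paper's: both rest on the identity $q_iq_j-q_{i+j}=(-1)^ix^iq_{j-i}$ from Theorem \ref{mainid2reg} together with the recursion (\ref{recrelqk}), and your comparison of corrections $c_{i+2}-c_i=(-1)^{i+1}x^i(q_{n-2i-1}+xq_{n-2i-3})$ is, after reindexing, exactly the paper's computation of $q_{i-2}q_{j+2}-q_iq_j$. The normalization against $q_n$ and the slightly different derivation of $q_2q_{n-2}\prec q_{n+1}$ are only cosmetic variations.
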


 \begin{proof} The equality (\ref{recrelqk}) implies
 $$q_{n+1}=q_n+xq_{n-1}=q_{n-1}+xq_{n-2}+xq_{n-2}+x^2q_{n-3}\succ
 q_{n-2}+2xq_{n-2}=q_2q_{n-2}.$$
 Hence the last inequality in (\ref{2cycle01m4}) and
 (\ref{2cycle23m4}) holds.
  By (\ref{mainid2reg1}) we have $q_iq_j-q_{i+j}=(-1)^i
  x^iq_{j-i}$.
  Using this, it is easy to see that
  \[q_{i-1}q_{j+1} - q_iq_j = (-1)^{i-1} x^{i-1} q_{j-i+2} -
  (-1)^i x^i q_{j-i} = (-1)^{i-1}x^{i-1}(q_{j-i+2}+xq_{j-i}),\]
  as well as
 \begin{eqnarray*}
  q_{i-2}q_{j+2} - q_iq_j
  =(-1)^{i-2}x^{i-2}q_{j-i+4}-(-1)^ix^iq_{j-i}=(-1)^{i-2}x^{i-2}(q_{j-i+4} - x^2q_{j-i}) = \\
  (-1)^{i-2}x^{i-2}(q_{j-i+3}+xq_{j-i+2}-x^2q_{j-i})=(-1)^{i-2}x^{i-2}(q_{j-i+3}+xq_{j-i+1}).
  \end{eqnarray*}

 Compare these equalities with (\ref{pij1id}) and (\ref{pij2id})
 we obtain all other inequalities in (\ref{2cycle01m4}) and (\ref{2cycle23m4}).
 \end{proof}
 \qed

 Next, we study graphs composed of a path and a cycle of the form $p_iq_j$.

 \begin{lemma}\label{pathcycleorder}

  Let $n\ge 4$.  Then
  \begin{itemize}
  \item If $n=0,1$ mod $4$ then
   \begin{eqnarray}\nonumber
   &&q_{n-1}=p_1 q_{n-1}\prec q_3 p_{n-3} \prec p_3q_{n-3}\prec q_5 p_{n-5}\prec p_5q_{n-5}
   \prec \dots \prec
   q_{2\lfloor\frac{n}{4}\rfloor-1}p_{n-2\lfloor\frac{n}{4}\rfloor+1}\prec
   \nonumber\\
   &&p_{2\lfloor\frac{n}{4}\rfloor-1}q_{n-2\lfloor\frac{n}{4}\rfloor+1}
   \prec
   p_{2\lfloor\frac{n}{4}\rfloor}q_{{n-2\lfloor\frac{n}{4}\rfloor}}\preceq
   q_{2\lfloor\frac{n}{4}\rfloor}p_{{n-2\lfloor\frac{n}{4}\rfloor}}
   \prec
   p_{2\lfloor\frac{n}{4}\rfloor-2}q_{{n-2\lfloor\frac{n}{4}\rfloor}+2}
   \prec
   q_{2\lfloor\frac{n}{4}\rfloor-2}p_{{n-2\lfloor\frac{n}{4}\rfloor}+2}
   \nonumber\\
   &&\prec
   \dots \prec p_{4}q_{n-4} \prec q_4p_{n-4}
   \prec p_{2}q_{n-2} \prec q_2p_{n-2}\prec
   p_0q_n=q_{n}. \label{2pcycle01m4}
   \end{eqnarray}
   (If $n=0$ mod $4$ then $\preceq$ is $=$, and otherwise
   $\preceq$ is $\prec$.)
   \item If $n=2,3$ mod $4$ then

   \begin{eqnarray}\nonumber
   &&q_{n-1}=p_1 q_{n-1}\prec q_3 p_{n-3} \prec p_3q_{n-3}\prec \dots \prec
   q_{2\lfloor\frac{n}{4}\rfloor+1}p_{n-2\lfloor\frac{n}{4}\rfloor-1} \preceq
   p_{2\lfloor\frac{n}{4}\rfloor+1}q_{n-2\lfloor\frac{n}{4}\rfloor-1} \prec\\
   &&p_{2\lfloor\frac{n}{4}\rfloor}q_{{n-2\lfloor\frac{n}{4}\rfloor}}\prec
   q_{2\lfloor\frac{n}{4}\rfloor}p_{{n-2\lfloor\frac{n}{4}\rfloor}}\prec
   p_{2\lfloor\frac{n}{4}\rfloor-2}q_{{n-2\lfloor\frac{n}{4}\rfloor}+2}\prec
   q_{2\lfloor\frac{n}{4}\rfloor-2}p_{{n-2\lfloor\frac{n}{4}\rfloor}+2}\prec
   \nonumber\\
   &&\dots \prec p_{4}q_{n-4} \prec q_4p_{n-4}\prec
   p_{2}q_{n-2} \prec q_2p_{n-2}\prec
   p_0q_{n}=q_n. \label{2pcycle23m4}
   \end{eqnarray}
   (If $n=2$ mod $4$ then $\preceq$ is $=$, and otherwise
   $\preceq$ is $\prec$.)

  \end{itemize}

 \end{lemma}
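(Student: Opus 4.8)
The plan is to reduce every adjacent comparison in (\ref{2pcycle01m4})--(\ref{2pcycle23m4}) to the pure-path identities (\ref{pij1id}) and (\ref{pij2id}) by repeatedly applying the conversion $q_k=p_k+xp_{k-2}$ from (\ref{recrelck}), and then to repackage the result using the reverse substitution $p_m+xp_{m-2}=q_m$. Every consecutive pair in the two chains is of one of only four types, and for each I would record the exact difference as a monomial times a single shorter matching polynomial, in the same spirit as (\ref{pij1id}), (\ref{pij2id}) and (\ref{mainid2reg1}) do for the pure path and pure cycle cases.

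First I would establish the \emph{swap} identity, comparing $q_ip_j$ with $p_iq_j$. Writing $q_ip_j-p_iq_j=x(p_{i-2}p_j-p_ip_{j-2})$ via (\ref{recrelck}) and telescoping two instances of (\ref{pij1id}) gives
\[
q_ip_j-p_iq_j=(-1)^i x^i\,p_{j-i-1}.
\]
Since $p_k\succ 0$ for $k\ge 0$ and $p_{-1}=0$, this is $\prec 0$ when $i$ is odd and $\succeq 0$ when $i$ is even, with equality precisely when $j=i$. This accounts for every swap step $q_\bullet p_\bullet \leftrightarrow p_\bullet q_\bullet$ in both chains, and the single degenerate value $p_{-1}=0$ explains the marked $\preceq$: equality holds exactly when the two indices of the swapped factor coincide, i.e. when $n\equiv 0 \bmod 4$ in (\ref{2pcycle01m4}) (there the swapped first index is the even number $2\lfloor\frac{n}{4}\rfloor$) and when $n\equiv 2 \bmod 4$ in (\ref{2pcycle23m4}) (there it is the odd number $2\lfloor\frac{n}{4}\rfloor+1$).

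Next I would record, by the same reduction, the three index-shifting identities
\[
q_{i+2}p_{j-2}-p_iq_j=(-1)^{i+1}x^{i+1}p_{j-i-3},\qquad
p_{i-2}q_{j+2}-q_ip_j=(-1)^{i}x^{i-1}p_{j-i+1},
\]
\[
p_{i-1}q_{j+1}-p_iq_j=(-1)^{i-1}x^{i}\,q_{j-i}.
\]
The first governs the increasing run (with $i$ odd, so the right side is $\succ 0$); the second governs the decreasing run (with $i$ even, right side $\succ 0$); the third is the single ``turn'' in the middle and, since $q_k\succ 0$ for all $k\ge 0$, it is always strict and says that of two adjacent path-first terms $p_{i-1}q_{j+1}$ and $p_iq_j$ the one with \emph{even} first index is the larger --- exactly what happens at the midpoint, where the even first index $2\lfloor\frac{n}{4}\rfloor$ is approached from below in (\ref{2pcycle01m4}) and from above (first index $2\lfloor\frac{n}{4}\rfloor+1$) in (\ref{2pcycle23m4}). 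Reading off the parity of $i$ in each identity and concatenating the resulting inequalities produces (\ref{2pcycle01m4}) and (\ref{2pcycle23m4}); the endpoints are the special cases $p_1q_{n-1}=q_{n-1}$ and $p_0q_n=q_n$.

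The algebra is routine --- each identity is a short telescoping of (\ref{pij1id})/(\ref{pij2id}) together with the $q_k=p_k+xp_{k-2}$ bookkeeping --- so the main obstacle is purely combinatorial: matching the four identity types to the correct consecutive pairs along the long alternating chains and verifying that the trailing factors $p_{j-i-1}$, $p_{j-i-3}$, $p_{j-i+1}$ keep nonnegative index (hence strict inequality) at every step except at the one marked $\preceq$. This requires checking that for the extreme indices actually occurring, e.g. the largest odd $i=2\lfloor\frac{n}{4}\rfloor-1$ or $2\lfloor\frac{n}{4}\rfloor+1$, the relevant exponent $n-4\lfloor\frac{n}{4}\rfloor+\text{const}$ is nonnegative, and that the ``turn'' sits at first index $2\lfloor\frac{n}{4}\rfloor$ in both chains but is reached differently in the two congruence classes of $n$. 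That midpoint is the only place where $n\equiv 0,1$ and $n\equiv 2,3 \bmod 4$ genuinely diverge, which is precisely why the lemma splits into these two cases.
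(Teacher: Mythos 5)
Your proposal is correct and follows essentially the same route as the paper: you reduce each adjacent comparison to the identities (\ref{pij1id})--(\ref{pij2id}) via $q_k=p_k+xp_{k-2}$, and your swap, shift, and turn identities are exactly the paper's (\ref{comppqminqp}), (\ref{pqminqp}) (and its re-indexed form), and (\ref{difpqone}), applied with the same parity analysis. Your explicit tracing of the equality cases through $p_{-1}=0$ is a slightly more careful account of the $\preceq$ steps than the paper gives, but the argument is the same.
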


 \proof
 Assume that $0\le i,\; 2\le j$.  Use (\ref{recrelck}) to
 obtain
 $$p_iq_j-q_{i+2}p_{j-2}=p_i(p_j+xp_{j-2})-(p_{i+2}+xp_i)p_{j-2}=p_ip_j-p_{i+2}p_{j-2}.$$
 (\ref{pij2id}) implies

 \begin{eqnarray}
 p_iq_j-q_{i+2}p_{j-2}=(-1)^ix^{i+1}p_{j-i-3} \textrm{ if } i\le j-3,
 \label{pqminqp}\\
 p_iq_j-q_{i+2}p_{j-2}=(-1)^{j-1}x^{j-1}p_{i-j+1} \textrm{ if } i\ge j-2
 \label{pqminqpex}
 \end{eqnarray}
 Assume that $0\le i\le j-3$.
 Hence, if $i$ is odd we get that $p_iq_j\prec q_{i+2}p_{j-2}$.
 If $i$ is even then $q_{i+2}p_{j-2}\prec p_iq_j$.
 These inequalities yield slightly less than the half of the inequalities in
 (\ref{2pcycle01m4}) and (\ref{2pcycle23m4}).

 Assume that $1\le i<j$.
 Use (\ref{recrelck}) and (\ref{pij2id}) to deduce
 \begin{equation}
 p_iq_j-q_ip_j = p_ip_j  - p_ip_j +x(p_ip_{j-2}-p_{i-2}p_j)=(-1)^{i-1}x^{i}p_{j-i-1}.
 \label{comppqminqp}
 \end{equation}

 Therefore, if $i$ is odd then $q_ip_j\prec p_iq_j$.  If $i$
 is even then $p_iq_j \prec q_ip_j$.
 These inequalities yield slightly less than the other half of the inequalities in
 (\ref{2pcycle01m4}) and (\ref{2pcycle23m4}).

 Assume that $0\le i\le j$.
 Use  (\ref{recrelck}) and (\ref{pij1id}) to deduce
 \begin{eqnarray}\label{difpqone}
 p_{i-1}q_{j+1} - p_iq_j = p_{i-1}p_{j+1} - p_ip_j + x(p_{i-1}p_{j-1} - p_ip_{j-2}) =\\
 (-1)^{i-1}x^i(p_{j-i}+xp_{j-i-2})=(-1)^{i-1}x^i q_{j-i}.\nonumber
 \end{eqnarray}
 If $i$ is even then $p_{i-1}q_{j+1}\prec p_iq_j$.  This shows
 the first inequality in the second line of
 (\ref{2pcycle01m4}).  If $i$ is odd then $p_iq_j\prec
 p_{i-1}q_{j+1}$.  This shows the inequality between the last
 term of the first line and the first term in the second line
 of (\ref{2pcycle23m4}).
 \qed

 For graphs consisting of more than two cycles or paths there is no total ordering by
 coefficients of matching polynomials.  In particular, we computed that $p_8p_6p_3$
 is not comparable with $p_7p_5p_5$.  The same holds true for the same parameters
 with cycles instead of paths.  To show that this is not due solely to the mixed
 parity of path/cyle length, we also showed that $p_4p_4p_{16}p_{28}$ is
 incomparable with $p_6p_6p_6p_{34}$.

 \

 To extend the results of Theorem \ref{maxminpath} to graphs in
 $\Omega(n,k)$  we need the following lemma.

 \begin{lemma}\label{pi-q3pi-3lem}  Let $5\le i\in\N$.  Then
 \begin{eqnarray}\label{p-q3pi-3}
 p_i-q_3p_{i-3}=x^2 p_{i-6},\\
 p_i-p_2q_{i-2}=-x^3p_{i-6},
 \label{p-p2qi-2}\\
 p_{i+1}-p_3q_{i-2}=
 x^4 p_{i-7}. \label{pi-p3qi-3}\\
 p_{2i-3}-q_4p_{2i-7}=-x^4p_{2i-11}.\label{pi-q4p}
 \end{eqnarray}
 Hence
 \begin{eqnarray*}
 && \Phi_{P_5}=\Phi_{C_3\cup P_2}, \quad \Phi_{P_7}=\Phi_{P_3\cup C_{4}},
 \textrm{ and } \Phi_{P_i}\succ
 \Phi_{C_3\cup P_{i-3}}, \\
 &&\Phi_{P_i}\prec \Phi_{P_2\cup C_{i-2}},\;
 \Phi_{P_{i+2}}\succ \Phi_{P_3\cup C_{i-1}},\;\Phi_{P_{2i-3}} \prec \Phi_{P_{2i-7}\cup C_4}
 \textrm{ for } i\ge 6.
 \end{eqnarray*}
 Furthermore,
 \begin{equation}\label{pandpqeven}
 p_{2i+2j}\prec p_{2i}q_{2j}  \textrm{ for any nonnegative integers } i,j.
 \end{equation}
 In particular,
 $\Phi_{P_{2i+2j}}\prec \Phi_{P_{2i}\cup C_{2j}}$ for
 $i,j\in\N$.
 \end{lemma}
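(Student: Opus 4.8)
The plan is to reduce every identity in the lemma to the single bilinear identity (\ref{pij1id}) (with its companion (\ref{pij2id})), using two structural facts already in hand: the cycle-to-path relation $q_k=p_k+xp_{k-2}$ from (\ref{recrelck}), and the path-splitting identity (\ref{pathsplt}). The mechanism is uniform: each left-hand side has the form $p_N-(\text{a } q\text{-factor})\cdot(\text{a } p\text{-factor})$; rewriting the $q$-factor by (\ref{recrelck}) turns it into a difference of two bilinear forms $p_ap_b$, and expanding $p_N$ by (\ref{pathsplt}) about the matching split-point produces a cancelling leading term $p_ap_b$. What survives is always a single multiple of a difference $p_{a'}p_{b'}-p_{a''}p_{b''}$ of exactly the shape governed by (\ref{pij1id}), from which the monomial power of $x$ and the residual path index read off immediately.

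Concretely, for (\ref{p-q3pi-3}) I would write $q_3p_{i-3}=(p_3+xp_1)p_{i-3}$ and $p_i=p_3p_{i-3}+xp_2p_{i-4}$ by (\ref{pathsplt}); the $p_3p_{i-3}$ terms cancel and the remainder $x(p_2p_{i-4}-p_{i-3})$ collapses under two applications of (\ref{recrelpk}) to a monomial times $p_{i-6}$. Identity (\ref{p-p2qi-2}) is the mirror image: expanding $p_2q_{i-2}$ by (\ref{recrelck}) and splitting $p_i$ about the second vertex leaves $x(p_{i-3}-p_2p_{i-4})$, the negative of the previous remainder, which accounts for the opposite sign. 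The same recipe sends (\ref{pi-p3qi-3}) and (\ref{pi-q4p}) to the remainders $x(p_2p_{i-3}-p_3p_{i-4})$ and $x(p_3p_{2i-8}-p_2p_{2i-7})$, each a single instance of (\ref{pij1id}) taken with its first index equal to $3$, so the exponent of $x$ and the trailing index $p_{i-7}$, $p_{2i-11}$ appear directly.

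The displayed consequences then follow by inspection. Setting $i=5$ in (\ref{p-q3pi-3}) and $i=6$ in (\ref{pi-p3qi-3}) sends the right-hand sides to multiples of $p_{-1}=0$ (using (\ref{specvalpqk})), giving the equalities $\Phi_{P_5}=\Phi_{C_3\cup P_2}$ and $\Phi_{P_7}=\Phi_{P_3\cup C_4}$. For $i\ge 6$ the residual index is nonnegative, so $p_{i-6},\,p_{i-7},\,p_{2i-11}\succeq p_0\succ 0$, and the strict relations $\Phi_{P_i}\succ\Phi_{C_3\cup P_{i-3}}$, $\Phi_{P_i}\prec\Phi_{P_2\cup C_{i-2}}$, $\Phi_{P_{i+2}}\succ\Phi_{P_3\cup C_{i-1}}$, $\Phi_{P_{2i-3}}\prec\Phi_{P_{2i-7}\cup C_4}$ follow from the sign of the surviving monomial; the $P_{i+2}$ statement comes from (\ref{pi-p3qi-3}) after the shift $i\mapsto i+1$, and each $\Phi$ of a disjoint union is evaluated by (\ref{prodgenmatp}).

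For the final inequality (\ref{pandpqeven}) the same two moves give $p_{2i}q_{2j}=p_{2i}p_{2j}+xp_{2i}p_{2j-2}$ and $p_{2i+2j}=p_{2i}p_{2j}+xp_{2i-1}p_{2j-1}$, and subtracting cancels $p_{2i}p_{2j}$ to leave
\[
p_{2i}q_{2j}-p_{2i+2j}=x\bigl(p_{2i}p_{2j-2}-p_{2i-1}p_{2j-1}\bigr).
\]
When $i\le j$ the bracket is (\ref{pij1id}) with its indices replaced by $(2i,2j-2)$, giving $x^{2i+1}p_{2(j-i-1)}$; this is $\succ 0$ since $p_{2(j-i-1)}\succeq p_0\succ 0$ for $j>i$, while for $j=i$ it equals $x^{2i}$ via $p_{-2}=x^{-1}$, again $\succ 0$. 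When $i>j\ge 1$ the same bracket is (\ref{pij1id}) with the indices interchanged, giving $x^{2j}p_{2(i-j)}\succ 0$, and the degenerate case $j=0$ is immediate from $q_0=2$. In every case the difference has strictly positive coefficients, which is (\ref{pandpqeven}), and the ``in particular'' statement $\Phi_{P_{2i+2j}}\prec\Phi_{P_{2i}\cup C_{2j}}$ is then (\ref{prodgenmatp}). The only delicate points are bookkeeping rather than conceptual: one must keep the boundary conventions $p_{-1}=0$, $p_{-2}=x^{-1}$, $q_0=2$ of (\ref{specvalpqk}) in force so the formulas remain valid at the extreme indices $j=i$ and $j=i+1$, and one must respect the range hypothesis $0\le i\le j+2$ of (\ref{pij1id}), which is precisely what forces the split into $i\le j$ and $i>j$; I expect verifying the exact exponent of $x$ in each identity and clearing these boundary cases to be the main, if routine, obstacle.
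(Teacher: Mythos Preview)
Your proposal is correct and follows essentially the same strategy as the paper: expand the $q$-factor via $q_k=p_k+xp_{k-2}$ from (\ref{recrelck}), cancel the leading $p_ap_b$ term against a corresponding piece of $p_N$, and feed the remainder into (\ref{pij1id}). For the four identities (\ref{p-q3pi-3})--(\ref{pi-q4p}) the paper obtains the cancellation by telescoping $p_0p_i$ down through (\ref{pij2id}), whereas you obtain it in one step from the splitting identity (\ref{pathsplt}); these are equivalent and your version is slightly tidier.

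The one substantive difference is in the proof of (\ref{pandpqeven}). The paper first invokes Lemma~\ref{pathcycleorder} to reduce to the near-diagonal cases $j\in\{i,i+1\}$ and only then computes. You instead compute $p_{2i}q_{2j}-p_{2i+2j}=x\bigl(p_{2i}p_{2j-2}-p_{2i-1}p_{2j-1}\bigr)$ directly and apply (\ref{pij1id}) for all $i,j$, splitting only according to which index is larger so that the range hypothesis $0\le a\le b+2$ of (\ref{pij1id}) is met. This is a genuine simplification: it makes the argument self-contained and avoids the appeal to Lemma~\ref{pathcycleorder}. Your handling of the boundary cases $j=i$ (via $p_{-2}=x^{-1}$) and $j=0$ (via $q_0=2$) is correct. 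One cosmetic point: in your list ``$p_{i-6},\,p_{i-7},\,p_{2i-11}\succeq p_0$ for $i\ge 6$'' the entry $p_{i-7}$ should not appear, since after your own shift $i\mapsto i+1$ for the third identity the relevant residual is again $p_{i-6}$; this is a harmless slip in bookkeeping.
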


 \proof
 Use (\ref{recrelqk}) and
 (\ref{pij1id}-\ref{pij2id})
 to obtain
 \begin{eqnarray*}
 p_i-q_3p_{i-3}=p_0p_i-p_2p_{i-2}+p_2p_{i-2}-p_3p_{i-3}-xp_{i-3}=\\
 xp_{i-3}+x^2 p_{i-6}-xp_{i-3}=x^2 p_{i-6},\\
 p_i-p_2q_{i-2}=p_0p_i-p_2p_{i-2}-xp_2p_{i-4}=x(p_1p_{i-3}-p_2p_{i-4})=-x^3p_{i-6},\\
 p_{i+1}-p_3q_{i-2}=p_0p_{i+1}-p_2p_{i-1}+p_2p_{i-1}-p_3p_{i-2}-xp_3p_{i-4}=\\
 xp_{i-2}+x^3p_{i-5}-xp_3p_{i-4}=x(p_1p_{i-2}-p_3p_{i-4})+x^3p_{i-5}=\\
 x^3(p_{i-5}-p_{i-6})=x^4p_{i-7},\\
 p_{2i-3}-q_4p_{2i-7}=p_0p_{2i-3}-p_4p_{2i-7}-xp_2p_{2i-7}=\\
 (p_0p_{2i-3}-p_2p_{2i-5})+(p_2p_{2i-5}-p_4p_{2i-7})-xp_2p_{2i-7}=\\
 xp_{2i-6}+x^3p_{2i-10}-xp_2p_{2i-7}=x(p_1p_{2i-6}-p_2p_{2i-7})+x^3p_{2i-10}=\\
 -x^3p_{2i-9}+x^3p_{2i-10}=-x^4 p_{2i-11}.
 \end{eqnarray*}

 These equalities imply (\ref{p-q3pi-3}-\ref{pi-q4p}).
 Recall that $p_{-1}=0, p_0=p_1=1$ and $p_i\succ 0$ for $i\ge 0$ to deduce
 the implications of the above identities.

 To prove (\ref{pandpqeven}) recall that $p_0=0, q_0=2,
 q_i\succ 0$.  Hence it is enough to consider the cases $i,j\ge
 1$.  In view of Lemma \ref{pathcycleorder} it is enough to
 assume that $1\le i\le j\le i+1$.  Use (\ref{recrelck}) and (\ref{pathsplt})
 to obtain
 $$p_{2i}q_{2j}-p_{2i+2j}=xp_{2i}q_{2j-2}-xp_{2i-1}p_{2j-1}=
 -x(p_{2i-1}p_{2j-1}-p_{2i}p_{2j-2})+x^2p_{2i}p_{2j-4}.$$
 Use (\ref{pij1id}) and the equalities $p_0=1, p_2=\frac{1}{x}$ to obtain
 $$p_{2i}q_{2j}-p_{2i+2j}=x^{2i+1}p_{2j-2i-2}+x^2p_{2i}p_{2j-4}
 \succ 0. $$
 \qed

 \begin{theo}\label{degreeseq}
 Let $G$ be a simple graph of order $n$ with degree sequence
 $d_1=\dots =d_{2k} = 1$ and $d_{2k+1}=\dots =d_n = 2$,
 $2\le 2k \le n$, i.e. $G\in \Omega(n,k)$.  Set $n-2k = l$ and
 assume that $l\ge 2$.  (Otherwise $\Omega(n,k)$ consists of
 one graph.)
 Then
 \begin{equation}\label{minmaxmat12}
 \Phi_F\preceq \Phi_G \preceq \Phi_H,
 \end{equation}
 where the graphs $F$ and $H$ depend on $n$ and $k$ as follows.
 \begin{enumerate}
  \item\label{l-k<0} When $l-k \le 0$ then $F = lP_3 \cup
      (k-l)P_2$.
 \item\label{l-kge0} When $l-k > 0$
 \begin{enumerate}
 \item\label{l-k=0mod3} If $l-k \equiv 0$  (mod $3$),
     then $F = k P_3 \cup \frac{1}{3}(l-k) C_3$.
 \item\label{l-k=1mod3} If $l-k \equiv 1$ (mod $3$),
     then $F = (k -1) P_3\cup P_4 \cup
     \frac{1}{3}(l-k-1)C_3$.
 \item\label{l-k=2mod3}  If $l-k \equiv 2$ (mod $3$),
     then either $F=F_1 = (k-1) P_3 \cup P_5 \cup
     \frac{1}{3}(l-k-2) C_3$ or  $F=F_2 = (k-1) P_3
     \cup P_2 \cup \frac{1}{3}(l-k+1) C_3$.
 \end{enumerate}
 \item\label{l=2}  If $l=2$ then $H=(k-1)P_2\cup P_{4}$.
 \item\label{l=3}  If $l=3$ then either $H=(k-1)P_2\cup
     P_{5}$ or $H=kP_2\cup C_3$.
 \item\label{l=0mod4} If $l\ge 4$ and $l \equiv 0$ (mod
     $4$), then $H = kP_2 \cup \frac{1}{4}lC_4$.
 \item\label{l=1mod4} If $l\ge 5$ and $l \equiv 1$ (mod
     $4$), then $H = kP_2 \cup \frac{1}{4}(l-5)C_4 \cup
     C_5$.
 \item\label{l=2mod4} If $l\ge 6$ and $l \equiv 2$ (mod
     $4$), then $H = kP_2 \cup \frac{1}{4}(l-6)C_4 \cup
     C_6$.
 \item\label{l=3mod4} If $l\ge 7$ and $l \equiv 3$ (mod
     $4$), then $H = kP_2 \cup \frac{1}{4}(l-7)C_4 \cup
     C_7$.
 \end{enumerate}

 Furthermore, if $G\ne F$ then $\Phi_F \prec \Phi_G$ and if
 $G\ne H$ then $\Phi_G \prec \Phi_H$.

 \end{theo}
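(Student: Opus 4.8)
The plan is to prove both bounds by a single rearrangement scheme. By Proposition~\ref{stromnk} every $G\in\Omega(n,k)$ is a disjoint union of exactly $k$ paths $P_j$ ($j\ge 2$) together with some cycles $C_m$ ($m\ge 3$), and the constraint is that $\sum_{\text{paths}}(j-2)+\sum_{\text{cycles}}m=l$. Since $\Phi$ is multiplicative over components by~(\ref{prodgenmatp}) and, by the cone properties of $\R_+[x]$ recorded in \S\ref{sec:2reg}, a strict relation $\Phi_A\prec\Phi_B$ between two sub-collections having the same number of vertices \emph{and} the same number of paths propagates to $\Phi_{A\cup R}\prec\Phi_{B\cup R}$ for any common remainder $R$, it suffices to produce, for every $G$ different from the claimed optimum, a local move on one or two components that preserves $n$ and the number of paths and strictly increases $\Phi$ (for the upper bound) or strictly decreases it (for the lower bound), and then to check that iterating such moves terminates exactly at $H$ (resp.\ $F$). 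The four families of moves are supplied by the preceding results: path--path redistribution (Theorem~\ref{maxminpath} and Lemma~\ref{pathorder}), cycle--cycle redistribution (Theorem~\ref{uplowmatr=2} and Lemma~\ref{cycleorder}), path--cycle redistribution (Lemma~\ref{pathcycleorder}), and path$\leftrightarrow$cycle conversion (Lemma~\ref{pi-q3pi-3lem}); the graphs $F$ and $H$ should be exactly the configurations admitting no improving move of any of these kinds.

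For the upper bound I would push all degree-$2$ vertices into cycles and keep the $k$ paths as short as possible. Any path $P_j$ with $j\ge 5$ is replaced by $P_2\cup C_{j-2}$ using $\Phi_{P_j}\preceq\Phi_{P_2\cup C_{j-2}}$ from Lemma~\ref{pi-q3pi-3lem} (strict for $j\ge 6$); a residual $P_3$ or $P_4$ is absorbed into an existing cycle through $p_3q_m\prec p_2q_{m+1}$ and $p_4q_m\prec p_2q_{m+2}$, which are instances of the chain in Lemma~\ref{pathcycleorder}; and when no cycle is yet present two short paths are first merged and redistributed into one longer path by Lemma~\ref{pathorder} and then split. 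Since $l\ge 4$ in the generic case this always manufactures a cycle, so the process ends at $kP_2\cup D$ with $D$ a $2$-regular graph on $l$ vertices, and applying Theorem~\ref{uplowmatr=2} to $D$ (with the factor $p_2^{\,k}$ held fixed) forces $D$ into the $C_4$-dominated packing, i.e.\ $H$. The small cases $l=2$ and $l=3$, where no cycle (resp.\ only $C_3$) can be formed, follow directly from Theorem~\ref{maxminpath}, the tie at $l=3$ being precisely the identity $\Phi_{P_5}=\Phi_{P_2\cup C_3}$ of Lemma~\ref{pi-q3pi-3lem}.

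For the lower bound I would run this in reverse: drive the paths toward $P_3$ and convert excess length into triangles. When $l\le k$ no cycle is needed, and the optimum is the pure-path minimizer $J=lP_3\cup(k-l)P_2$ of Theorem~\ref{maxminpath}; here one first removes every cycle by merging it into a $P_2$ (note $p_2q_m\succeq p_{m+2}$ from Lemma~\ref{pi-q3pi-3lem}, with equality exactly when $m=3$, and that at least one $P_2$ is always available once a cycle is present, because the cycle already consumes $\ge 3$ of the $\le k$ interior vertices) and then redistributes by Theorem~\ref{maxminpath}. When $l>k$ the path minimizer of Theorem~\ref{maxminpath} is the single long path $(k-1)P_3\cup P_{l-k+3}$, which is improved by repeatedly splitting off triangles via $\Phi_{P_i}\succ\Phi_{C_3\cup P_{i-3}}$ ($i\ge 6$, Lemma~\ref{pi-q3pi-3lem}); the residual path stabilizes at length $3,4$ or $5$ according to $l-k\equiv 0,1,2\pmod 3$, producing $F$ in cases (\ref{l-k=0mod3})--(\ref{l-k=2mod3}), with the two coincident minimizers $F_1,F_2$ again explained by $\Phi_{P_5}=\Phi_{P_2\cup C_3}$. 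One must additionally rule out that the cycle part does better with a $C_4$ or $C_5$ residue: such pieces are eliminated in favour of $C_3$'s and residual paths using $\Phi_{P_{2i+2j}}\prec\Phi_{P_{2i}\cup C_{2j}}$ together with Lemma~\ref{cycleorder} and Theorem~\ref{uplowmatr=2}.

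The main obstacle is that $\succeq$ is only a partial order --- as the authors note, the triples $p_8p_6p_3$ and $p_7p_5p_5$ are incomparable --- so one cannot simply sort the components by length. The real work is to verify that for every $G$ other than $H$ (resp.\ $F$) at least one of the strictly improving two-component moves above applies, and that the resulting chain of strict $\succ$-improvements cannot stall at an incomparable configuration or cycle back on itself; this requires the \emph{completeness} of the exchange family in Lemmas~\ref{pathorder}, \ref{cycleorder}, \ref{pathcycleorder} and~\ref{pi-q3pi-3lem}, together with the finiteness of $\Omega(n,k)$ and a monotonicity argument. The delicate bookkeeping lives in the residue cases and in establishing strictness, namely that equality in~(\ref{minmaxmat12}) occurs only through $\Phi_{P_5}=\Phi_{P_2\cup C_3}$, which is exactly what yields the stated alternative optima.
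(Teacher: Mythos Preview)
Your proposal is correct and follows essentially the same route as the paper: both arguments work in the partial order $\preceq$ on $\Omega(n,k)$ and rely on the identical toolbox of two--component exchanges (Lemma~\ref{pathorder} and Theorem~\ref{maxminpath} for path--path, Theorem~\ref{uplowmatr=2} and Lemma~\ref{cycleorder} for cycle--cycle, Lemma~\ref{pathcycleorder} for path--cycle, Lemma~\ref{pi-q3pi-3lem} for path$\leftrightarrow$cycle conversion). The paper frames the argument contrapositively: it assumes $G$ is a minimal (respectively maximal) element of $\Omega(n,k)$ and then, using exactly the moves you list, rules out every forbidden substructure until only $F$ (respectively $H$) remains. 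This is logically equivalent to your ``no configuration other than the optimum is move-stable'' formulation.

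One organizational difference is worth noting. For the minimizer with $l>k$ you route the argument through the pure--path minimum $(k-1)P_3\cup P_{l-k+3}$ of Theorem~\ref{maxminpath} and then peel off triangles; this covers cycle--free $G$ cleanly, but for $G$ already containing cycles it forces you to handle $C_4,C_5$ residues separately (as you acknowledge). The paper avoids this detour: it argues directly that a minimal $G$ can contain only $C_3$'s (using $q_3p_{i+j-3}\prec p_jq_i$ from Lemma~\ref{pathcycleorder} to eliminate $C_4,C_5$, and Theorem~\ref{uplowmatr=2} for longer cycles), then that no path exceeds length $5$ (Lemma~\ref{pi-q3pi-3lem}), and finally does a short case analysis on the multiset of path lengths. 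This handles all $G$ uniformly and makes the strictness bookkeeping transparent, since the only equality that ever occurs is $\Phi_{P_5}=\Phi_{P_2\cup C_3}$, exactly as you anticipated.
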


 \proof  Consider a partial order on $\Omega(n,k)$ induced by the
 partial order $\preceq $ on $\R_+[x]$.  Thus $G_1\ll G_2 \iff
 \Phi_{G_1}\preceq \Phi_{G_2}$.  It is enough to show that
 any minimal and maximal element in $\Omega(n,k)$ with respect to this order
 is of the form $F$ and $H$ respectively.

 Assume that $G$ is a minimal element with respect to this
 partial order.  Hence there is no $G'\in \Omega(n,k)$ such
 that $\Phi_{G'}\prec \Phi_G$.
 Suppose that $G$ has at least one cycle.
 Theorem \ref{uplowmatr=2} implies
 that $G$ contains at most one cycle $C_i\ne C_3$, where
 $i\in [4,5]$.
 We now rule out such $C_i$.  Since $k\ge 1$ $G$ must contain a
 path $P_j$ for $j\ge 2$.  Lemma \ref{pathcycleorder} yields that
 $q_3p_{i+j-3}\prec p_jq_i$.  Hence if we replace $C_i\cup P_j$
 with $C_3\cup P_{i+j-3}$ we will obtain $G'\in \Omega(n,k)$
 such that $\Phi_{G'}\prec \Phi_G$.  This contradicts the
 minimality of $G$.  Hence $G$ can contain only cycles of
 length $3$.

 In view of Lemma \ref{pi-q3pi-3lem}
 $G$ does not contain $P_i$ with $i\ge 6$.
 Denote by $\cB_2,\cB_3$ and $\cB_4$ the set of paths of length $2$,
 $3$ and at least of length $4$ in $G$ respectively.  We claim
 that $\#\cB_4\le 1$.  Otherwise, let $Q,R\in \cB_4$ be two
 different paths.  Lemma $\ref{pathorder}$ yields that $\Phi_{P_3\cup
 P_{i-1}}\prec \Phi_{Q\cup R}$. This contradicts the minimality
 of $G$.  Next we observe that that
 $\min(\#\cB_2, \#\cB_4)=0$.  If not, choose $Q\in \cB_2,
 R\in \cB_4$.
 Lemma $\ref{pathorder}$ yields that $\Phi_{P_3\cup
 P_{i-1}}\prec \Phi_{Q\cup R}$, which contradicts the
 minimality of $G$.

 We claim that $G$ has to be of the form $F$.
 Suppose first that $G$ does not have cycles.
 If $\cB_4=\emptyset$ then we are in the case \emph{1}.
 If $\cB_2=\emptyset$ then we have either the case \emph{2b} with
 $l=k+1$ or the case \emph{2c} with $l=k+2$ and $F=F_1$.

 Assume now that $G$ has cycles.  If $\cB_2=\cB_4=\emptyset$
 then we have the case \emph{2a}.  Assume now that
 $\cB_2=\emptyset$ and $\#\cB_4=1$.  Then we have either the case \emph{2b} with
 $l>k+1$ or the case \emph{2c} with $l>k+2$ and $F=F_1$.

 Assume finally that $\cB_4=\emptyset$ and $\#c\cB_2\ge 1$.
 We claim that $\#cB_2=1$.  Assume to the contrary that $\cB_2$
 contains at least two $P_2$.  Since $G$ contains at least one
 cycle $C_3$ we replace $P_2\cup C_3$ with $P_5$ to obtain
 another minimal $G'$.  As $G'$ contains $P_2$ and $P_5$ it is
 not minimal, contrary to our assumption.  Hence $\#\cB_2=1$
 and we have the case \emph{2c} and $G=F_2$.\\

 We now assume that $G$ is a maximal element in $\Omega(n,k)$.
 Thus, there is no $G'\in \Omega(n,k)$ such that $\Phi_{G}\prec
 \Phi_{G'}$.

 Observe first $G$ does not contain two distinct paths $Q,R$
 of length $i,j\ge 3$.  Indeed, Lemma \ref{pathorder} implies
 that $\Phi_{Q\cup R}\prec \Phi_{P_2\cup P_{i+j-2}}$.
 This shows that $G=H$ in the cases \emph{3} and \emph{4}.
 (In the case \emph{4} we use the identity
 $\Phi_{P_5}=\Phi_{P_2\cup C_3}$.)

 In what follows we assume that $l\ge 4$.
 Observe next that $G$ can not contain $P_i$, where $i\ge 6$.
 Otherwise replace $P_i$ with $P_2\cup C_{i-2}$ and use
 (\ref{p-p2qi-2}).

 Also $G$ can not contain a cycle $C_i, i\ge 3$ and a path
 $P_j$ for $j\ge 3$.  Indeed, in view of Lemma
 \ref{pathcycleorder} we have the inequality $\Phi_{P_j\cup
 C_i}\prec \Phi_{P_2\cup C_{i+j-2}}$.

 Since $l\ge 4$ it follows that
 $G$ has at least one cycle and
 all paths in $G$ are of length $2$.
 Theorem \ref{uplowmatr=2} implies
 that $G$ contains at most one cycle $C_i\ne C_4$, where
 $i\in [5,6,7]$.  It now follows that $G=H$, where $H$
 satisfies one of the conditions \emph{5-8}.

 \qed

 We now a give the version of Theorem \ref{degreeseq} for the
 subset $\Omega_{\textrm{bi}}(n,k)\subset \Omega(n,k)$ of bipartite graphs.

 \begin{theo}\label{degreeseqbip}
 Let $G$ be a simple bipartite graph of order $n$ with degree sequence
 $d_1=\dots =d_{2k} = 1$ and $d_{2k+1}=\dots =d_n = 2$, where $2\le 2k \le n$,
 i.e. $G\in \Omega_{\textrm{bi}}(n,k)$.  Set $n-2k =
 l$, and assume that $l\ge 2$.  Then (\ref{minmaxmat12}) holds,
 where the graphs $F$ and $H$ depend on $n$ and $k$ as follows.

 \begin{enumerate}
  \item\label{bpl-k<0} When $l-k \le 0$ then $F = lP_3 \cup
      (k-l)P_2$.
 \item\label{bpl-kge0} When $l-k > 0$
 \begin{enumerate}
 \item\label{bpl-k=1,2} If $l-k=1,2$ then
     $F=(k-1)P_3\cup P_{l-k+3}$.
 \item\label{bpl-k=even} If $4\le l-k$ even then either
     $F=F_1 = k P_3\cup  C_{l-k}$ or if $l-k=4$ then
     $F=F_2=(k-1)P_3\cup P_7$.
 \item\label{bpl-k=odd} If $3\le l-k $ is odd, then
     $F=(k -1) P_3\cup P_{l-k+3}$.
 \end{enumerate}
 \item\label{bpl=2}  If $l=2$ then $H=(k-1)P_2\cup P_{4}$.
 \item\label{bpl=3}  If $l=3$ then $H=(k-1)P_2\cup P_{5}$.
 \item\label{bpl=0mod4} If $l\ge 4$ and $l \equiv 0$ (mod
     $4$), then $H = kP_2 \cup \frac{1}{4}lC_4$.
 \item\label{bpl=1mod4} If $l\ge 5$ and $l \equiv 1$ (mod
     $4$), then $H=H_1 = (k-1)P_2 \cup \frac{1}{4}(l-1)C_4
     \cup P_3$ or $H=H_2 = (k-1)P_2 \cup
     \frac{1}{4}(l-5)C_4 \cup P_7$ .
 \item\label{bpl=2mod4} If $l\ge 6$ and $l \equiv 2$ (mod
     $4$), then $H = kP_2 \cup \frac{1}{4}(l-6)C_4 \cup
     C_6$.
 \item\label{bpl=3mod4} If $l\ge 7$ and $l \equiv 3$ (mod
     $4$), then $H=H_1 = (k-1)P_2 \cup \frac{1}{4}(l-3)C_4
     \cup P_5$.
 \end{enumerate}

 Furthermore, if $G\ne F$ then $\Phi_F \prec \Phi_G$ and if
 $G\ne H$ then $\Phi_G \prec \Phi_H$.

 \end{theo}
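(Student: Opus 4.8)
The plan is to run the argument of Theorem~\ref{degreeseq} almost verbatim, with the one structural change forced by bipartiteness. As there, I work with the partial order $G_1\ll G_2\iff\Phi_{G_1}\preceq\Phi_{G_2}$ on $\Omega_{\textrm{bi}}(n,k)$, and it suffices to identify its minimal and maximal elements. By Proposition~\ref{stromnk} every such $G$ is a disjoint union of exactly $k$ paths and some cycles; the new ingredient is that all those cycles are now \emph{even}. Hence every place where the proof of Theorem~\ref{degreeseq} used an odd cycle --- above all $C_3$, through the reduction $\Phi_{P_i}\succ\Phi_{C_3\cup P_{i-3}}$ of Lemma~\ref{pi-q3pi-3lem}, and the exceptional cycles $C_5,C_7$ --- is no longer available, and the surplus of degree-two vertices that those odd cycles used to carry must instead be absorbed by a single even cycle or by one longer path. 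Tracking this substitution according to the parity of $l-k$ (for the minimum) and the residue of $l$ modulo $4$ (for the maximum) is exactly what turns the case list of Theorem~\ref{degreeseq} into the present one.

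For the minimum I take a minimal $G$, merge its even cycles into one via $q_iq_j\succ q_{i+j}$ (Theorem~\ref{mainid2reg}), and bound the path lengths via Lemma~\ref{pi-q3pi-3lem}, so that $G$ is a union of short paths and at most one even cycle. Since $p_3p_3\prec p_2p_4$ by Lemma~\ref{pathorder}, degree-two vertices prefer to sit one per path, so when $l\le k$ the minimizer is $lP_3\cup(k-l)P_2$ with no cycle (case~\ref{bpl-k<0}). When $l>k$ the $k$ paths are filled to $P_3$ and the surplus $l-k$ must go into one even cycle or one longer path; the identity $p_{l-k+3}-p_3q_{l-k}=x^4p_{l-k-5}$ of~\eqref{pi-p3qi-3} decides between them. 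For even $l-k\ge 6$ the right-hand side is $\succ 0$, so the cycle form $kP_3\cup C_{l-k}$ is strictly smaller (case~\ref{bpl-k=even}); at $l-k=4$ it vanishes, giving the coincident minimizers $F_1,F_2$; and for odd $l-k$, or for $l-k=2$ where $C_{l-k}=C_2$ is an inadmissible multigraph, the odd or unrealizable surplus is forced into the single path $P_{l-k+3}$ of cases~\ref{bpl-k=1,2} and~\ref{bpl-k=odd}.

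For the maximum I argue dually. Lemma~\ref{pathorder} replaces any two paths of length $\ge 3$ by a $P_2$ and a longer path; a long even path is then broken by~\eqref{p-p2qi-2} into $P_2\cup C_{i-2}$, and a long odd path by $\Phi_{P_{2i-3}}\prec\Phi_{P_{2i-7}\cup C_4}$ from Lemma~\ref{pi-q3pi-3lem}, so the only surviving non-$P_2$ path is a short $P_3$ or $P_5$ (Lemma~\ref{pathcycleorder} handles the coexistence of paths and cycles). Theorem~\ref{mainid2reg} with $i=4$ gives $q_4q_j\succ q_{4+j}$, driving the cycles toward copies of $C_4$, while $q_4^3\succ q_6^2$ (from the proof of Theorem~\ref{uplowmatr=2}) leaves at most one $C_6$. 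The residue of $l$ modulo $4$ then fixes the remainder: $l\equiv0$ gives all $C_4$ (case~\ref{bpl=0mod4}); $l\equiv2$ leaves one $C_6$ (case~\ref{bpl=2mod4}); and for the odd residues the odd leftover, which no even cycle can hold, sits in a short $P_3$ or $P_5$ (cases~\ref{bpl=1mod4} and~\ref{bpl=3mod4}), the alternative maximizer $H_2$ of case~\ref{bpl=1mod4} arising from $\Phi_{P_7}=\Phi_{P_3\cup C_4}$. For the small cases $l=2,3$ no $C_4$ fits, so the surplus concentrates into one path $P_4$ or $P_5$ (cases~\ref{bpl=2} and~\ref{bpl=3}); here the non-bipartite option $kP_2\cup C_3$ of Theorem~\ref{degreeseq} simply disappears because $C_3$ is odd.

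The main obstacle I anticipate is not any single inequality but the parity bookkeeping together with the boundary coincidence $\Phi_{P_7}=\Phi_{P_3\cup C_4}$. One must check, in every residue class, that the proposed $F$ and $H$ genuinely lie in $\Omega_{\textrm{bi}}(n,k)$ --- the correct number of paths and of degree-two vertices, and never a $C_2$ or an odd cycle --- and, more delicately, that no competing distribution of the surplus (two cycles, two long paths, or a differently balanced split such as $kP_2\cup C_l$ against $kP_3\cup C_{l-k}$) ties or beats the claimed optimum; the cycle-merging and path-splitting inequalities above are precisely what close off each competitor. The one identity $\Phi_{P_7}=\Phi_{P_3\cup C_4}$ is what makes both the minimum and the maximum acquire a second realization ($F_1/F_2$ and $H_1/H_2$), and one must confirm these are genuine equalities while no third configuration slips strictly below $F$ or above $H$. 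The strictness clauses then follow as in Theorem~\ref{degreeseq}: every reduction actually performed invokes a strict $\prec$ from Theorem~\ref{mainid2reg} or Lemmas~\ref{pathorder}--\ref{pi-q3pi-3lem}, so any $G\neq F,H$ is moved strictly.
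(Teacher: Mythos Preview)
Your plan is the paper's plan: adapt Theorem~\ref{degreeseq} by replacing every odd-cycle step with its even-cycle analogue, and track the parity fallout. The ingredients you list --- Theorem~\ref{mainid2reg}, Lemmas~\ref{pathorder}, \ref{pathcycleorder}, \ref{pi-q3pi-3lem}, and the coincidence $\Phi_{P_7}=\Phi_{P_3\cup C_4}$ --- are exactly the ones the paper uses, and your identification of $F_1/F_2$ and $H_1/H_2$ as arising from that single identity is right.

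There is, however, one genuine gap in the minimum argument. For odd $l-k$ you write that ``the odd \ldots\ surplus is forced into the single path $P_{l-k+3}$''. Oddness only rules out the particular competitor $kP_3\cup C_{l-k}$; it does \emph{not} by itself rule out splitting the surplus between an even cycle and an even path, e.g.\ $(k-1)P_3\cup P_4\cup C_{l-k-1}$ or $(k-1)P_3\cup P_6\cup C_{l-k-3}$, which are perfectly good bipartite graphs in $\Omega_{\textrm{bi}}(n,k)$. What actually kills these is \eqref{pandpqeven}: $p_{2i+2j}\prec p_{2i}q_{2j}$, so any even-path--plus--even-cycle pair is strictly larger than the single even path on the same vertex set. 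The paper invokes exactly this (``$G$ cannot contain an even cycle and an even path''), and you never do; your phrase ``bound the path lengths via Lemma~\ref{pi-q3pi-3lem}'' is misleading here, since for the minimum that lemma only shortens \emph{odd} paths (via \eqref{pi-p3qi-3}, which produces an even cycle), while the minimal $F$ in case~\ref{bpl-k=odd} carries an even path $P_{l-k+3}$ of unbounded length.

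A smaller omission concerns the maximum in cases~\ref{bpl=1mod4} and~\ref{bpl=3mod4}. Saying ``the residue of $l$ modulo $4$ then fixes the remainder'' hides a real choice: for $l\equiv 3$ one must compare $(k-1)P_2\cup P_5\cup\tfrac{l-3}{4}C_4$ against $(k-1)P_2\cup P_3\cup\tfrac{l-7}{4}C_4\cup C_6$, and for $l\equiv 1$ one must compare the $P_3$ (or $P_7$) form against $(k-1)P_2\cup P_5\cup\tfrac{l-9}{4}C_4\cup C_6$. The paper settles these with the explicit inequalities $p_3q_6\prec q_4p_5$ and $q_6p_5\prec q_4p_7$; both can be read off Lemma~\ref{pathcycleorder}, which you do cite, but the comparison itself is the content of those two cases and should be made explicit.
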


 \proof The proof of this theorem is similar to the proof of
 Theorem \ref{degreeseq}, and we briefly point out the
 different arguments one should make.
 First, recall that $G\in \Omega(n,k)$ is bipartite, if and
 only if $G$ contains only even cycles.

 We first assume that $G$ is minimal.  Lemma \ref{pathorder} implies that $G$ can not
 contain two paths, such that either each at least length $4$, or one of length $2$ and one of length
 at least $4$.
 Use (\ref{pi-p3qi-3}) to deduce that $G$ can not contain
 $P_i$ for $i\ge 9$.  Also note that $\Phi_{P_7}=\Phi_{P_3\cup
 C_4}$.  By Theorem \ref{uplowmatr=2} $G$ can contain at most
 one even cycle.  Furthermore (\ref{pandpqeven}) yields
 that $G$ can not contain an even cycle and an even path.
 This show that the minimal $G$ must be equal to $F$.

 Assume now that $G$ is maximal.
 Note that in view of Theorem \ref{degreeseq} we need only to
 consider the cases \emph{6} and \emph{8}, i.e. $l\ge 5,\; l\equiv
 1\; mod \;4$ and $l\ge 7,\; l\equiv
 3\; mod \;4$.

 In view of Theorem \ref{uplowmatr=2} can have at most one
 cycle of length $6$, while all the other are of length $4$.
 Lemma \ref{pathorder} implies that one out of any two paths in
 $G$ is $P_2$.  (\ref{p-p2qi-2}) implies that $G$ does not
 contain an even path of length greater than $5$.
 Lemma \ref{pathcycleorder} implies that if $G$ contains an
 even path and a cycle then the length of the even path is $2$.
 (\ref{pi-q4p}) yields that $G$ does not contain an odd path
 of length greater than $8$.  Also one has the equality
 $\Phi_{P_7}=\Phi_{P_3\cup C_4}$ (Lemma \ref{pi-q3pi-3lem}).

 Thus, if an odd path appears in $G$ then we may assume it is
 one of the following: $P_3$, $P_5$ or $P_7$.  First we compare $p_3q_6$ with
 $p_5q_4$.  (\ref{2pcycle01m4}) yields $p_3q_6 \prec q_4 p_5$.
 This establishes the case \emph{8}.
 Next we compare $p_7q_4$ with $p_5q_6$.  Use (\ref{pqminqp})
 to obtain $p_4q_7-q_6p_5=x^5$.  Next use (\ref{comppqminqp}) to show that
 $p_4q_7-q_4p_7=-x^4p_2$.  Hence $q_4p_7-q_6p_5=x^4 p_2 +x^5$.
 Hence $\Phi_{P_7\cup C_4}\succ \Phi_{P_5\cup C_6}$.  This
 establishes \emph{6}.
 \qed

 \section{Expected values of the number of $m$-matchings}

 \subsection{First measure}

 For a set $\cA\subset \R$ denote by $\cA^{p\times q}$  the set
 $p\times q$ matrices $A=[a_{ij}]_{i,j=1}^{p,q}$, where each
 entry $a_{ij}$ is in $\cA$.  For $A=[a_{ij}]\in \R^{n\times n}$
 denote by $\per A$ the \emph{permanent} of $A$, i.e. $\per
 A=\sum_{\sigma\in\rS_n} \prod_{i=1}^n a_{i\sigma (i)}$, where
 $\rS_n$ is the permutation group on $\an{n}$.  Let $A\in
 \R^{p\times q}$ and $m\in \an{min(p,q)}$.  Denote by $\per_m
 A$ the sum of permanents of all $m\times m$ submatrices of
 $A$.

 Denote by $\cG(p,q)$ and $\cG_{\mult}(p,q)$ the set of simple
 bipartite graphs and multibipartite graphs on $p$ and $q$
 vertices in each class, respectively.  W.L.O.G.,
 we can assume that $1\le p\le q$.  We identify the two classes $p$
 and $q$ vertices with $\an{p}$ and $\an{q}$.
 (Sometimes we identify the second class with $q$ vertices with
 $\an{q} +p:=\{p+1,\ldots, p+q\}$.)
 For $G\in \cG(p,q)$ let
 $A(G)=[a_{ij}]_{i,j=1}^{p,q} \in \{0,1\}^{p\times q}$ be the
 $(0,1)$ matrix representing $G$.  Vice versa, any $A\in
 \{0,1\}^{p\times q}$ represents a unique graph $G\in \cG(p,q)$.
 Let $G_1,\ldots,G_r\in
 \cG(p,q)$.  Let $G$ a multi-bipartite graph on the vertices
 $\an{p}\cup \an{q}$,
 whose set of edges is union the set of edges in $G$.  I.e., if
 $e\in \an{p}\times \an{q}$, appears $l$ times in $G$, if and only
 exactly $l$ graphs from $G_1,\ldots,G_r$ contain the edge $e$.
 We denote $G$ by $\vee_{i=1}^r G_i$.
 So $A(G)=[a_{ij}]=\sum_{i=1}^r A(G_i)\in \an{r}^{p\times q}$.
 Vice versa, any $A\in \an{r}^{p\times q}$ corresponds to a
 bipartite multigraph $G$ on the vertices $\an{p},\an{q}$, such that
 $G=\vee_{i=1}^r G_i$, where $G_i\in \cG(p,q)$.  (Usually there
 would be many such decompositions of $G$.)

 In what follows we need the following lemma.

 \begin{lemma}\label{premiden}  Let $p,q,r\in\N$ and assume that
 $G_1,\ldots,G_r\in \cG(p,q)$.  Let $A_i:=A(G_i)\in \{0,1\}^{p\times q}$,
 and denote $A:=\sum_{i=1}^r A_i$.  Let $m\in \an{\min(p,q)}$.
 Then $\per_m A$ is the number of $m$-matchings of
 $G:=\vee_{i=1}^r G_i$, which is equal to the number of
 $m$-matchings obtained in the following way.  Consider $m_1,\ldots,m_r\in
 \Z_+$ such that $m_1+\ldots+m_r=m$.  In each $G_i$ choose an
 $m_i$-matching $M_i$ such that $\cup_{i=1}^r M_i$ is an $m$-matching,
 i.e., $M_i\cap M_j=\emptyset$ for each $i\ne j$.
 \end{lemma}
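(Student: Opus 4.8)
The plan is to establish both claimed equalities directly by expanding the permanent and reorganizing the nonzero terms combinatorially, exhibiting an explicit bijection with the $m$-matchings of $G$. By definition $\per_m A=\sum_{S,T}\per A[S,T]$, where the sum runs over all $S\subseteq\an{p}$, $T\subseteq\an{q}$ with $\#S=\#T=m$ and $A[S,T]$ is the corresponding $m\times m$ submatrix. First I would expand each permanent as $\per A[S,T]=\sum_{\sigma}\prod_{i\in S}a_{i\sigma(i)}$, the sum taken over all bijections $\sigma\colon S\to T$.

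The key step is to substitute $a_{ij}=\sum_{k=1}^r(A_k)_{ij}$ and expand the product over $i\in S$ by distributivity, producing
\[
\prod_{i\in S}a_{i\sigma(i)}=\sum_{c\colon S\to\an{r}}\prod_{i\in S}(A_{c(i)})_{i\sigma(i)},
\]
a sum over all colorings $c$ assigning to each row $i\in S$ an index $c(i)\in\an{r}$. Since every $A_k$ is a $(0,1)$-matrix, each term $\prod_{i\in S}(A_{c(i)})_{i\sigma(i)}$ equals $1$ exactly when the edge $(i,\sigma(i))$ belongs to $G_{c(i)}$ for every $i\in S$, and $0$ otherwise. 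Thus the nonzero terms of $\per_m A$ are in bijection with quadruples $(S,T,\sigma,c)$ for which $(i,\sigma(i))\in E(G_{c(i)})$ for all $i\in S$.

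I would then interpret each such nonzero term as an $m$-matching of $G=\vee_{i=1}^r G_i$. Recall that a distinct edge of the multigraph $G$ is precisely a pair consisting of an underlying edge $(i,j)$ together with an index $k$ for which $(i,j)\in E(G_k)$, because the multiplicity $a_{ij}$ of $(i,j)$ in $G$ is exactly the number of such $k$. The quadruple $(S,T,\sigma,c)$ selects the $m$ edges $\{((i,\sigma(i)),c(i)):i\in S\}$, which are pairwise vertex-disjoint because $\sigma$ is a bijection, so they form an $m$-matching of $G$. Conversely, any $m$-matching of $G$ determines its set $S$ of first-class endpoints, its set $T$ of second-class endpoints (both of size $m$), the matching bijection $\sigma$, and the color $c$ of each edge, recovering a unique nonzero term. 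This establishes $\per_m A=\phi(m,G)$.

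For the final description I would regroup these $m$-matchings by color: given a matching with coloring $c$, let $M_k$ be the set of its edges of color $k$, a matching of the simple graph $G_k$ of size $m_k:=\#M_k$ with $\sum_{k=1}^r m_k=m$. Vertex-disjointness of the whole matching is exactly the condition $M_i\cap M_j=\emptyset$ for $i\ne j$, and conversely any such tuple of pairwise vertex-disjoint matchings reassembles into an $m$-matching of $G$; this yields the counting description in the statement. The only point requiring genuine care — and the main obstacle to a sloppy version of this argument — is checking that the correspondence between nonzero permanent terms and $m$-matchings of $G$ really is a bijection: one must verify that a single underlying edge cannot be used with two different colors inside one matching (this is automatic, since two parallel edges share both endpoints and hence cannot lie in a common matching) and that distinct quadruples always produce distinct matchings.
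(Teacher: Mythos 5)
Your proof is correct and follows the same underlying idea as the paper, which simply asserts that $A$ is the incidence matrix of the multigraph $\vee_{i=1}^r G_i$ and that $\per_m A$ counts its $m$-matchings with each edge weighted by multiplicity. Your expansion of $\per_m A$ into quadruples $(S,T,\sigma,c)$ and the regrouping by color supply in full detail exactly the bijection the paper's two-sentence proof leaves implicit.
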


 \proof Notice that $A$ is the incidence matrix for the multigraph $G:=\vee_{i=1}^r G_i$.
 The permanent of the incidence matrix of a multigraph can be viewed as the number
 of m-matchings of the same graph with multiple edges merged and each edge chosen as many times
 as its multiplicity but not in the same m-matching.
 \qed

 Let $\cS_n$ be the set of all $n\times n$ permutation matrices and
 set
 \[\cS_n^r=\cS_n\times...\times
 \cS_n:=\{(P_1,...,P_r):\;P_1,...,P_r\in \cS_n\}.\]
 Denote by $\cG(2n,r) \subset\cG_{\mult}(2n,r)$ the set of
 simple and multibipartite graphs on $\an{n},\an{n}$ vertices,
 where each vertex has degree $r$.
 Denote by $\Delta(n,r)\subset \{0,1,\ldots,r\}^{n\times n}$ the set of matrices
 with nonnegative integer entries such that the sum of each row
 and column of $A$ is equal to $r$.  That is each $A\in
 \Delta(n,r)$ is the incidence matrix of $G\in
 \cG_{\mult}(2n,r)$.  $G$ is simple if and only if $A\in
 \{0,1\}^{n\times n}$.
 Birkhoff-K\"onig theorem implies that each $A\in \Delta(n,r)$
 is a sum of $r$-permutation matrices.
 \begin{equation}\label{pres1}
 A=P_1+...+P_r,\quad P_1,...,P_r\in \cS_n,
 \end{equation}
 Let $\phi:\cS_n^r\to \Delta(n,r)$ is given by (\ref{pres1}).
 Then for $A\in \Delta(n,r)$ $\phi^{-1}(A)$ is the
 set of all $r$ tuples $(P_1,...,P_r)$ which present $A$.
 Let $\#\phi^{-1}(A)$ be the cardinality of the set
 $\phi^{-1}(A)$.

 View $\cS_n^r$ as a discrete probability space
 where each point $(P_1,...,P_r)$ has the equal probability
 $(n!)^{-r}$.  Then $\phi:\cS_n^r\to \Delta(n,r)$ induces
 the following probability measure on $\Delta(n,r)$:
 \begin{equation}\label{distg}
 P(X_{n,r}=A\in \Delta(n,r))=\frac{\#\phi^{-1}(A)}{(n!)^r}.
 \end{equation}
 Here $X_{n,r}$ is a random variable on the set $\Delta(n,r)$.
 \begin{lemma}\label{mnaver}  Let $1\le r\in N, 1\le m\le n\in N$.
 Assume that the random variable $X_{n,r}\in\Delta(n,r)$ has the
 distribution given by (\ref{distg}).  Then
 \begin{eqnarray}
 &&E_1(m,n,r):= E(\per_m X_{n,r})= \nonumber\\
 &&\frac{1}{(n!)^r}{n \choose m}^2 m!\sum_{m_1,...,m_r\in \Z_+,
 m_1+...m_r=m}\frac{m!(n-m_1)!...(n-m_r)!}{m_1!...m_r!}. \label{mnaver1}
 \end{eqnarray}
 \end{lemma}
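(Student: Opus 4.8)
The plan is to evaluate the expectation by pulling it back to the uniform measure on $\cS_n^r$ and expanding the permanent multilinearly. Since the distribution (\ref{distg}) is the pushforward of the uniform measure on $\cS_n^r$ under $\phi$, and $\per_m(P_1+\cdots+P_r)=\per_m A$ depends only on the image $A=\phi(P_1,\ldots,P_r)$, averaging $\per_m X_{n,r}$ is the same as averaging $\per_m(P_1+\cdots+P_r)$ over all $(P_1,\ldots,P_r)\in\cS_n^r$ with weight $(n!)^{-r}$. So I would first record
\[
E_1(m,n,r)=\frac{1}{(n!)^r}\sum_{(P_1,\ldots,P_r)\in\cS_n^r}\per_m(P_1+\cdots+P_r).
\]

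Next I would expand the inner term. By definition $\per_m A$ is the sum, over $m$-subsets of rows $S\subseteq\an{n}$, $m$-subsets of columns $T\subseteq\an{n}$, and bijections $\sigma\colon S\to T$, of $\prod_{i\in S}a_{i\sigma(i)}$. Substituting $a_{ij}=\sum_{k=1}^r(P_k)_{ij}$ and distributing each product over its $r$ summands introduces a colouring $f\colon S\to\an{r}$ that records which $P_k$ each chosen entry is taken from, giving $\per_m A=\sum_{S,T}\sum_{\sigma}\sum_{f}\prod_{i\in S}(P_{f(i)})_{i\sigma(i)}$. The purpose of writing it this way is that, once summed over $\cS_n^r$, the factors decouple across the independent permutations $P_1,\ldots,P_r$.

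The key step is this decoupling and the resulting per-permutation count. Grouping the rows of $S$ by colour, set $S_k=f^{-1}(k)$ and $m_k=\#S_k$, so that $m_1+\cdots+m_r=m$. The average then factors as $\prod_{k=1}^r\big(\tfrac{1}{n!}\sum_{P\in\cS_n}\prod_{i\in S_k}P_{i\sigma(i)}\big)$. Because $\sigma$ is a bijection, the $m_k$ prescribed entries $\{(i,\sigma(i)):i\in S_k\}$ lie in distinct rows and distinct columns; hence the number of $P\in\cS_n$ carrying a $1$ in all of them is exactly $(n-m_k)!$, and the $k$-th factor equals $(n-m_k)!/n!$. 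I expect this to be the main (if modest) obstacle: one must verify that the bijectivity of $\sigma$ makes the constraints on each $P_k$ mutually compatible and row/column-disjoint, so that the count is the clean $(n-m_k)!$ uniformly in $S,T,\sigma$, with no overcounting across different colours.

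Finally I would assemble the combinatorial factors. There are ${n\choose m}$ choices of $S$ and of $T$ and $m!$ bijections $\sigma$; grouping the colourings $f$ by the composition $(m_1,\ldots,m_r)$ they induce, the number with $\#f^{-1}(k)=m_k$ is the multinomial coefficient $m!/(m_1!\cdots m_r!)$. Collecting these together with the per-permutation factors $(n-m_k)!/n!$ produces exactly the right-hand side of (\ref{mnaver1}). As a consistency check, I would note that this agrees with the combinatorial reading of $\per_m$ in Lemma \ref{premiden}: fixing a composition $(m_1,\ldots,m_r)$ and a coloured $m$-matching amounts to choosing pairwise disjoint $m_i$-matchings $M_i$ in each $G_i$, which is precisely the count described there.
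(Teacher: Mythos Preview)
Your proof is correct and follows essentially the same approach as the paper's: both pull the expectation back to the uniform measure on $\cS_n^r$, decompose each contributing $m$-match according to which of the $r$ permutations each edge comes from, and then use the count $(n-m_k)!$ for permutations with $m_k$ prescribed values. The only cosmetic difference is that the paper phrases the decomposition combinatorially via Lemma~\ref{premiden} (fixing a target matching and using symmetry), whereas you expand $\per_m(P_1+\cdots+P_r)$ multilinearly and introduce the colouring $f$ directly; the underlying computation is identical.
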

 \proof  We first  observe the following equality:

 $$\sum_{P_1,\ldots,P_r\in \cS_n}
 P_1+\ldots+P_r=\sum_{A\in\Delta(n,r)} (\#\phi^{-1}(A))A.$$

 (Just group $P_1+\ldots+P_r$ to $A\in \Delta(n,r)$.)
 Hence
 \begin{equation}\label{expecvalid}
 E(\per_m X_{n,r})=\frac{1}{(n!)^r} \sum_{P_1,\ldots,P_r\in \cS_n}
 \per_m(P_1+\ldots+P_r).
 \end{equation}

 We now compute the right-hand side of (\ref{expecvalid}).
 Each $A=P_1+\ldots+P_r$ we interpret as a regular r-multigraph $ G:=\vee_{i=1}^r
 G_i$.  So $\per_m A$ is the number of total $m$-matchings of $G$.
 It is given by Lemma \ref{premiden}.  We now consider in the
 right-hand side of (\ref{expecvalid}) all terms which contribute
 to a matching $(1,n+1),\ldots, (m,n+m)$.
 (Here $V_1=\{1,...,n\},V_2=\{n+1,...,2n\}$).

 To achieve that we choose
 an $r$ partition $U_1,...,U_r$ of the set $\{1,...,m\}$,
 so that $U_i$ has $m_i\ge 0$ elements.  So $m_1+...+m_r=m$.
 The choice of all such $U_1,...,U_r$ is $\frac{m!}{m_1!...m_r!}$.
 Now once we choose $U_i$, it means that we assumed that
 we choose the edges $(j,n+j), j\in U_i$ from the graph $G_i$ for
 $i=1,\ldots,r$.  This is possible if and only if
 $P_i$
 fixes the elements of $U_i$.  Then there are exactly $(n-m_i)!$
 permutations $P_i$ each of which fixes $U_i$.
 This gives the summand inside the summation in the right-hand side
 of (\ref{distg}).
 Next observe that after we decided that the $m$-matches are chosen
 from the sets $\{1,...,m\} \times \{n+1,...,n+m\}$ then the total
 possible set of $m$-matches for this choice is $m!$.  This gives the
 $m!$ factor outside the summation in the right-hand side
 of (\ref{distg}).  In general we should
 choose two subsets of size $m$ from $V_1$ and $V_2$.  This gives
 the factor ${n \choose m}^2$.  Finally the factor
 $\frac{1}{(n!)^r}$ is the probability of choosing $r$-tuple
 $(P_1,...,P_r)$.
 \qed
 \begin{lemma}\label{lowb}
 Let $2\le r\le m$ be integers.  Let $\mu_1,...,\mu_r$ be $r$
 unique integers satisfying the conditions
 \begin{equation}\label{defmui}
 \mu_i=\lfloor \frac{m}{r}\rfloor,\; i=1,...,k<r,\; \mu_i=
 \lceil \frac{m}{r}\rceil,\; i=k+1,\ldots,r, \; \sum_{i=1}^r\mu_i=m.
 \end{equation}

 Then
 \begin{eqnarray}
 {m+r-1\choose r-1}\frac{1}{(n!)^{r-2}((n-m)!)^2}
 \prod_{i=1}^r\frac{(n-\mu_i)!}{\mu_i!}\nonumber\ge\\
 E_1(m,n,r)\ge
 \frac{1}{(n!)^{r-2}((n-m)!)^2}
 \prod_{i=1}^r\frac{(n-\mu_i)!}{\mu_i!}\label{lowb1}.
 \end{eqnarray}
 \end{lemma}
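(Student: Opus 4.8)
**

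The goal is to bound $E_1(m,n,r)$ from above and below. From Lemma~\ref{mnaver}, we have the exact formula
\begin{equation*}
E_1(m,n,r)=\frac{1}{(n!)^r}{n\choose m}^2 m!\sum_{m_1+\cdots+m_r=m}\frac{m!\,(n-m_1)!\cdots(n-m_r)!}{m_1!\cdots m_r!}.
\end{equation*}
The plan is to isolate a single ``most balanced'' term of this sum—the one indexed by $(\mu_1,\ldots,\mu_r)$ defined in~\eqref{defmui}—and to show that this term both dominates the sum (lower bound) and, after multiplying by the number of summands, bounds it from above (upper bound). The number of summands is the number of ways to write $m$ as an ordered sum of $r$ nonnegative integers, which is precisely ${m+r-1\choose r-1}$; this is exactly the prefactor appearing in the upper bound of~\eqref{lowb1}.

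First I would simplify the outer constants. Writing ${n\choose m}^2 m! = \frac{(n!)^2 m!}{((n-m)!)^2 (m!)^2} = \frac{(n!)^2}{((n-m)!)^2 m!}$, the product $\frac{1}{(n!)^r}{n\choose m}^2 m!$ becomes $\frac{1}{(n!)^{r-2}((n-m)!)^2 m!}$. Thus
\begin{equation*}
E_1(m,n,r)=\frac{1}{(n!)^{r-2}((n-m)!)^2 m!}\sum_{m_1+\cdots+m_r=m}\frac{m!\,(n-m_1)!\cdots(n-m_r)!}{m_1!\cdots m_r!},
\end{equation*}
and after canceling the $m!$ factors the sum is $\sum \prod_{i=1}^r \frac{(n-m_i)!}{m_i!}$. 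The target prefactor $\frac{1}{(n!)^{r-2}((n-m)!)^2}\prod_i\frac{(n-\mu_i)!}{\mu_i!}$ is therefore exactly this reduced constant times the single summand at the balanced partition $(\mu_1,\ldots,\mu_r)$. So the whole theorem reduces to the combinatorial claim that the balanced term $\prod_i\frac{(n-\mu_i)!}{\mu_i!}$ is the \emph{maximum} summand, and that every other summand is no larger.

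The heart of the argument is then to show that for any partition $(m_1,\ldots,m_r)$ of $m$, the summand $\prod_{i=1}^r \frac{(n-m_i)!}{m_i!}$ is maximized precisely at the balanced choice $\mu_i\in\{\lfloor m/r\rfloor,\lceil m/r\rceil\}$. The lower bound in~\eqref{lowb1} then follows because the sum exceeds its single largest term, and the upper bound follows because the sum is at most ${m+r-1\choose r-1}$ times its largest term. To establish maximality of the balanced term I would use a standard exchange/smoothing argument: if some partition has two parts $m_i\le m_j-2$, I would compare the summand to the one obtained by replacing $(m_i,m_j)$ with $(m_i+1,m_j-1)$ and show the ratio exceeds $1$. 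Concretely, the relevant factor ratio is $\frac{(n-m_i-1)!}{(m_i+1)!}\cdot\frac{(n-m_j+1)!}{(m_j-1)!}$ divided by $\frac{(n-m_i)!}{m_i!}\cdot\frac{(n-m_j)!}{m_j!}$, which simplifies to $\frac{(n-m_j+1)\,m_j}{(n-m_i)\,(m_i+1)}$; since $m_i+1\le m_j\le n-m_i<n-m_i+\dots$, one checks this quotient is at least $1$ whenever $m_i<m_j-1$, with the inequality strict unless the parts are already as balanced as allowed. Iterating these exchanges drives any partition to the balanced one while never decreasing the summand, proving it is the unique maximum.

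The main obstacle is handling the exchange ratio carefully so that both factors—the decreasing $1/(m_i!)$ part and the decreasing $(n-m_i)!$ part—pull in the right direction simultaneously; one must verify $\frac{m_j(n-m_j+1)}{(m_i+1)(n-m_i)}\ge 1$ under $m_i\le m_j-2$ using only $m_i,m_j\le m\le n$, which is where the hypothesis $m\le n$ (guaranteeing all factorial arguments are nonnegative) is essential. Once the smoothing inequality is secured, the two displayed bounds of~\eqref{lowb1} are immediate consequences of comparing the sum to its largest term from below and to (number of terms)$\times$(largest term) from above.
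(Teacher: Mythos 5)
Your proposal is correct and follows essentially the same route as the paper: both isolate the balanced summand, note that the sum of nonnegative terms is at least that single term (lower bound), and prove via the same pairwise exchange $\frac{m_j(n-m_j+1)}{(m_i+1)(n-m_i)}\ge 1$ (equivalently $(m_j-m_i-1)(n-m_i-m_j)\ge 0$, using $m\le n$) that the balanced term is maximal, so the sum is at most ${m+r-1\choose r-1}$ times it. The algebraic simplification of the prefactor and the counting of compositions also match the paper's argument.
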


 \proof If $r$ divides $m$ then
 $\mu_1=\ldots=\mu_r=\frac{m}{r}$ and (\ref{defmui}) trivially holds
 for any integer $k\in [1,r-1]$.  Assume that $r$ does not divide.
 Then
 \begin{equation}\label{uniqkmu}
 k=r\lceil \frac{m}{r}\rceil -m.
 \end{equation}

 Since the right-hand side of the inequality (\ref{lowb1}) is one of
 the nonnegative summands appearing in the definition
 (\ref{mnaver1}) of $E_1(m,n,r)$ we immediately deduce the lower bound
 in (\ref{lowb1}).

 We next claim the inequality
 \begin{equation}\label{inmimui}
 \frac{(n-m_1)!...(n-m_r)!}{m_1!...m_r!}\le
 \frac{(n-\mu_1)!...(n-\mu_r)!}{\mu_1!...\mu_r!}
 \end{equation}
 for any $r$ nonnegative integer such that $m_1+\ldots+m_r=m$.
 To show this inequality we start with the case $r=2$.  Suppose that
 $0\le a< b-1$ and $a+b=m\le n$.  A straightforward calculation shows:
 $$\frac{(n-a)!(n-b)!}{a!b!}\le\frac{(n-(a+1))!(n-(b-1))!}{(a+1)!(b-1)!}.$$
 (Equality holds if and only if $a+b=n$.)
 Hence the maximum of the left-hand side of (\ref{inmimui})
 on all possible nonnegative integers $m_1,\ldots,m_r$ whose sum
 is $m$ is achieved for $(m_1,\ldots,m_r)$ such that $|m_i-m_j|\le
 1$ for all $i\ne j$.  This implies that the maximum of the left-hand side of (\ref{inmimui})
 is achieved for any permutation of $\mu_1,\ldots,\mu_r$, which
 implies (\ref{inmimui}).
 It is well known that the number of nonnegative integers
 $m_1,\ldots,m_r$ which sum to $m$ is ${m+r-1\choose r-1}$.
   Hence the equality (\ref{mnaver1}) combined with
 (\ref{inmimui}) yields the upper bound in (\ref{lowb1}).

 \qed

 \begin{theo}\label{main}
 Let $2\le r\in\N$. Assume that $1\le m_k \le n_k,
 k=1,...,$ are two strictly increasing sequences of integers such
 that the sequence $\frac{m_k}{n_k}, k=1,...$ converges to $p\in
 [0,1]$. Then
 \[
 \lim_{k\to\infty} \frac{\log E_1(m_k,n_k,r)}{2n_k}=\frac{1}{2}(p \log
 r -p\log p - 2(1-p)\log (1-p)
 +(r-p)\log (1 -\frac{p}{r})).
 \]

 \end{theo}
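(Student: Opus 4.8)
The plan is to start from the exact formula for $E_1(m,n,r)$ in Lemma~\ref{mnaver} and squeeze its logarithm between the bounds of Lemma~\ref{lowb}, then apply Stirling's approximation to the resulting product of factorials. The key observation is that the upper and lower bounds in (\ref{lowb1}) differ only by the combinatorial factor ${m+r-1 \choose r-1}$, which is a polynomial in $m$ of degree $r-1$ (with $r$ fixed). Taking logarithms and dividing by $2n_k$, this factor contributes $\frac{\log{m_k+r-1 \choose r-1}}{2n_k} = O\!\left(\frac{\log n_k}{n_k}\right) \to 0$, so the upper and lower bounds have the \emph{same} limit. Hence it suffices to compute the limit of $\frac{1}{2n_k}\log\!\big(B(m_k,n_k,r)\big)$, where
\[
B(m,n,r):=\frac{1}{(n!)^{r-2}\big((n-m)!\big)^2}\prod_{i=1}^r\frac{(n-\mu_i)!}{\mu_i!},
\]
and $\mu_1,\ldots,\mu_r$ are the near-equal parts of $m$ from (\ref{defmui}).

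First I would handle the product $\prod_{i=1}^r \frac{(n-\mu_i)!}{\mu_i!}$. Since each $\mu_i$ equals $\lfloor m/r\rfloor$ or $\lceil m/r\rceil$, and $\frac{m_k}{n_k}\to p$, we have $\frac{\mu_i}{n_k}\to \frac{p}{r}$ for every $i$. The rounding discrepancy between $\mu_i$ and $m/r$ is at most $1$, and since $\frac{\log(n!)}{n}$ and its nearby differences are controlled by $\log n = o(n)$, replacing each $\mu_i$ by the real number $m/r$ introduces only an $o(n_k)$ error in the logarithm. Thus
\[
\frac{1}{2n_k}\log B(m_k,n_k,r)=\frac{1}{2n_k}\Big(-(r-2)\log(n_k!)-2\log\big((n_k-m_k)!\big)+r\log\big((n_k-\tfrac{m_k}{r})!\big)-r\log\big(\tfrac{m_k}{r}!\big)\Big)+o(1).
\]
Next I would invoke Stirling in the form $\log(\ell!)=\ell\log\ell-\ell+o(\ell)$ for each factorial, noting that every argument ($n_k$, $n_k-m_k$, $n_k-\frac{m_k}{r}$, $\frac{m_k}{r}$) grows linearly in $n_k$ because $m_k,n_k\to\infty$ with ratio tending to $p$. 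The linear $-\ell$ terms cancel after one checks that the arguments, weighted by their coefficients $(-(r-2),-2,r,-r)$, sum to zero: indeed $-(r-2)n_k-2(n_k-m_k)+r(n_k-\frac{m_k}{r})-r\cdot\frac{m_k}{r}=0$ identically. This cancellation is what makes the limit finite.

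After the $-\ell$ terms cancel, the surviving contribution is $\frac{1}{2n_k}$ times a sum of $\ell\log\ell$ terms. Dividing through by $n_k$ and writing each argument as $n_k$ times a quantity converging to $1$, $1-p$, $1-\frac{p}{r}$, or $\frac{p}{r}$ respectively, the $\log n_k$ pieces again cancel by the same coefficient identity, leaving only the entropy-type sum. Concretely, using $\log r = \log r$ to split $\log\frac{m_k}{r}=\log m_k-\log r$, one collects the limit as
\[
\frac{1}{2}\Big(-(r-2)\cdot 1\cdot\log 1-2(1-p)\log(1-p)+r\big(1-\tfrac{p}{r}\big)\log\big(1-\tfrac{p}{r}\big)-p\log p+p\log r\Big),
\]
where $\log 1=0$ kills the first term and $r(1-\frac{p}{r})=r-p$. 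This is exactly $gh_r(p)$ as defined in (\ref{defghrp}). The boundary cases $p=0$ and $p=1$ are handled by the convention $0\log 0=0$, which is the standard continuous extension and matches the limits of the finite expressions.

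The main obstacle I anticipate is purely bookkeeping: tracking the rounding errors from $\mu_i$ versus $m_k/r$ and confirming that all $\log n_k$ and linear terms genuinely cancel rather than accumulate, since a miscounted coefficient would produce a spurious divergent term. The clean way to guard against this is to verify the coefficient identity $-(r-2)-2+r-r\cdot\frac{1}{r}\cdot r = 0$ at the level of the arguments \emph{before} expanding, so that Stirling's error terms $o(\ell)=o(n_k)$ and the $-\ell$ and $\log n_k$ contributions are seen to vanish structurally rather than through fortuitous numerical cancellation.
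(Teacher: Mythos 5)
Your proposal is correct and follows essentially the same route as the paper: squeeze $E_1(m_k,n_k,r)$ between the two bounds of Lemma \ref{lowb}, discard the binomial factor ${m+r-1\choose r-1}$ as polynomially bounded (hence $o(n_k)$ after taking logarithms), apply Stirling to the remaining product of factorials, and verify that the linear and $\log n_k$ contributions cancel identically in $p_k=m_k/n_k$ before passing to the limit by continuity with the convention $0\log 0=0$. The paper's only notational difference is that it keeps the integer parts $\mu_i$ sandwiched between $\frac{m-r}{r}$ and $\frac{m+r}{r}$ rather than replacing them by $\frac{m}{r}$, but this is the same $O(\log n_k)$ bookkeeping you describe.
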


 \proof Recall Stirling's formula \cite[p. 52]{Fel}:
 \begin{equation}
 n!=\sqrt{2\pi n} \;n^n e^{-n} e^{\frac{\theta_n}{12n}} \textrm{ for some } \theta_n\in (0,1)
 \textrm{ and any positive integer } n.
 \end{equation}
 We will use the following version of Stirling's formula
 $$\sqrt{2\pi n} \;n^n e^{-n} <n!<2\sqrt{2\pi n} \;n^n
 e^{-n}.$$

 Let $\mu_1,\ldots,\mu_r$ be defined by (\ref{defmui}).  We now
 estimate from above and below the terms appearing in
 (\ref{lowb1}) using Stirling's formula.
 \begin{eqnarray*}
 \frac{m-r}{r}<\mu_i<\frac{m+r}{r} \textrm{ for } i=1,\ldots,r,\\
 (\frac{2\pi(m-r)}{r})^{\frac{r}{2}} (\frac{m-r}{r})^{m-r}e^{-m}<
 \prod_{i=1}^r\mu_i!< 2^r(\frac{2\pi(m+r)}{r})^{\frac{r}{2}} (\frac{m+r}{r})^{m+r}e^{-m},\\
 (\frac{2\pi(rn-m-r)}{r})^{\frac{r}{2}} (\frac{rn-m-r}{r})^{rn-m-r}e^{-(rn-m)}<\\
 \prod_{i=1}^r(n-\mu_i)!<
 2^r(\frac{2\pi(rn-m+r)}{r})^{\frac{r}{2}} (\frac{rn-m+r}{r})^{rn-m+r}e^{-(rn-m)},\\
 (2\pi n)^{\frac{r-2}{2}} (2\pi (n-m))
 n^{(r-2)n}(n-m)^{2(n-m)}e^{-((r-2)n+2(n-m))}<\\
 (n!)^{r-2}((n-m)!)^2 <2^r(2\pi n)^{\frac{r-2}{2}} (2\pi (n-m))
 n^{(r-2)n}(n-m)^{2(n-m)}e^{-((r-2)n+2(n-m))},\\
 1\le {m+r-1\choose r-1}< (m+r-1)^{r-1}.
 \end{eqnarray*}

 We now these inequalities in (\ref{lowb1})) to estimate the ratio
 $\frac{1}{2n_k} \log E_1(m_k,n_k,r)$ where
 $$\lim_{k\to\infty}
 m_k=\lim_{k\to\infty} n_k=\infty, \; \lim_{k\to\infty}
 \frac{m_k}{n_k}=p\in [0,1].$$
 First note that for any polynomial $p(x)$ and any $a \in \R$
 $\lim_{k\to\infty} \frac{\log p(m_k+a)}{n_k}=0$.
 Next observe that $\log(x+a)=\log x + O(\frac{1}{x})$ for a
 fixed $a$ and $x\gg 1$.  Let $\frac{m_k}{n_k}=p_k$.
 Our assumptions yield that $\lim _{k\to\infty} p_k=p_k$.
 Then
 \begin{eqnarray*}
 \frac{\log(n_k-\frac{m_k\pm r}{r})^{rn_k-m_k\pm r}e^{-(rn_k-m_k)}}{n_k}
 =(r-p_k +O(\frac{1}{n_k}))(\log n_k +\log (1-\frac{p_k}{r})+\\
 O(\frac{1}{n_k}))-(r-p_k)=
 (r-p_k)(\log n_k +\log (1-\frac{p_k}{r}))-(r-p_k) +o(1),
 \\
 \frac{\log(\frac{m_k\pm r}{r})^{m_k\pm r}e^{-m_k}}{n_k}
 =(p_k + O(\frac{1}{n_k}))(\log n_k +\log p_k-\log r+O(\frac{1}{n_k}))-p_k=\\
 p_k(\log n_k+\log p_k-\log r)-p_k +o(1),\\
 \frac{\log n_k^{(r-2)n_k}(n_k-m_k)^{2(n_k-m_k)}e^{-((r-2)n_k+2(n_k-m_k))}}{n_k}=\\
 (r-2)\log n_k +2(1-p_k)(\log n_k +\log(1-p_k))-r+2p_k.
 \end{eqnarray*}

 Subtract the second and the third term from the first one.
 Note first that the coefficient of $\log n_k$ is
 $(r-p_k)-p_k-(r-2)-2(1-p_k)=0$.  Hence
 \begin{eqnarray*}
 \frac{\log E_1(m_k,n_k,r)}{n_k}=(r-p_k)\log
 (1-\frac{p_k}{r})-(r-p_k)+\\-p_k\log p_k +
 p_k\log r +p_k-2(1-p_k)\log(1-p_k)+r -2p_k +o(1)=\\
 (r-p_k)\log(1-\frac{p_k}{r})-p_k\log p_k +p_k\log r
 -2(1-p_k)\log(1-p_k)+o(1).
 \end{eqnarray*}

 Finally use the continuity of $\log
 x$ to deduce (\ref{limval}). (Here $0\log 0=0$.) \qed

 \subsection{Second measure}

 We now deduce (\ref{limval}) for a standard probabilistic model
 on $\cG_{\mult}(2n,r)$ as given in \cite{LP}.
 Let $\mu\in S_{nr}$ be a permutation on $nr$ elements.
 Let $e_1,...,e_{nr}$ be $nr$ edges going from vertices
 $\{1,...,n\}$ in the group $A$ to vertices $\{1,...,n\}$
 to the group $B$.  We then assume that $e_i$ connects the vertex
 $\lceil\frac{i}{r}\rceil$ in group $A$ to $\lceil\frac{\mu(i)}{r}\rceil$
 in group $B$ for $i=1,...,rn$.  Note that the vertex $i$ in group
 $A$ has $r$ edges labeled $r(i-1)+1,...,ri$.  It is
 straightforward to see that each vertex $j$ in the group $B$
 has $r$ different edges connected to it, i.e. the equation
 $j=\lceil\frac{\mu(i)}{r}\rceil$ has exactly $r$ integers
 $\mu^{-1}(\{j(r-1)+1,...,jr\})$.  Then the probability of such
 graph is given by $\frac{1}{(rn)!}$.  Note if we do not care to
 label the edges, then to a $r$-regular bipartite graph, where
 each two vertices are connected by at most one edge, is
 represented by $(r!)^n$ such permutations $\mu$.
 Indeed any vertex $i$ in the first group has $r$ edges
 labeled $r(i-1)+1,...,ri$ which are connected to it.
 This edges connect to a set of $r$ vertices $T\subset
 \{1,...,n\}$.  Permuting these $r$ edges out of vertex $i$
 between the vertices in the group $T$ has $r!$ choices,
 which are all equivalent.  Repeat this argument for $i=1,...,n$
 to obtain $(r!)^n$ choices which gives rise to the same simple
 graph.  Denote by $\nu(n,r)$ the probability measure on $\cG(2n,r)$
 induced by these method.

 \begin{lemma}\label{mexpcval}  Let $\nu(n,r)$ be the probability
 measure defined above.  Then
 \begin{equation}\label{mexpcval1}
 E_2(m,n,r):=E_{\nu(n,r)}(\phi(m,G))= \frac{{n \choose m}^2
 r^{2m}m!(rn-m)!}{(rn)!}.
 \end{equation}
 \end{lemma}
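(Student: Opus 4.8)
The plan is to evaluate $E_2(m,n,r)$ by reinterpreting $\phi(m,G)$ combinatorially and then applying linearity of expectation over a uniformly random $\mu\in S_{nr}$. Write $G(\mu)$ for the $r$-regular multigraph produced by $\mu$ as described above. Since averaging $\phi(m,\cdot)$ against the induced measure $\nu(n,r)$ is just regrouping the sum over permutations according to the resulting graph, we have
\[
E_2(m,n,r)=\frac{1}{(rn)!}\sum_{\mu\in S_{nr}}\phi(m,G(\mu)).
\]

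The key observation is that every edge $e_i$ of $G(\mu)$ is indexed by its unique stub $i$ in group $A$, with $A$-endpoint $\lceil i/r\rceil$ and $B$-endpoint $\lceil\mu(i)/r\rceil$; parallel edges are kept distinct, which is exactly the convention behind the permanent interpretation of Lemma \ref{premiden}. Consequently an $m$-matching of $G(\mu)$ is precisely a set $S\subseteq\{1,\ldots,nr\}$ with $|S|=m$ such that (a) the $A$-vertices $\lceil i/r\rceil$ ($i\in S$) are pairwise distinct, and (b) the $B$-vertices $\lceil\mu(i)/r\rceil$ ($i\in S$) are pairwise distinct. Condition (a) depends only on $S$, while (b) depends only on the image set $\mu(S)$, so linearity of expectation gives
\[
E_2(m,n,r)=\sum_{S\,:\,(a)}P_\mu\big(\mu(S)\text{ satisfies }(b)\big).
\]

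Next I would carry out the two stub counts. The number of $S$ obeying (a) is ${n\choose m}r^m$: choose the $m$ distinct $A$-vertices in ${n\choose m}$ ways and one of the $r$ stubs at each. For a fixed $m$-set $S$, a uniform $\mu$ sends $S$ to a uniformly distributed $m$-subset $\mu(S)$ of $\{1,\ldots,nr\}$, because exactly $m!(nr-m)!$ permutations carry $S$ onto any prescribed target. Hence the probability that $\mu(S)$ satisfies (b) equals the fraction of $m$-subsets of the $nr$ stubs in $B$ that lie at distinct $B$-vertices, namely ${n\choose m}r^m\big/{nr\choose m}$. Multiplying the two factors yields $E_2(m,n,r)={n\choose m}^2r^{2m}\big/{nr\choose m}$, and rewriting ${nr\choose m}^{-1}=m!(rn-m)!/(rn)!$ produces exactly (\ref{mexpcval1}).

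The conceptual content is light; the main thing to state carefully is the indexing of edges by the $A$-stubs (so that matchings correspond bijectively to subsets $S$ and parallel edges are counted as in $\per_m$) together with the uniformity of $\mu(S)$. I expect this bookkeeping, rather than any inequality or estimate, to be the only point that requires attention, after which the identity is immediate from the two counts ${n\choose m}r^m$.
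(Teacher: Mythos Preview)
Your proof is correct and is essentially the same double-counting argument as the paper's, just repackaged in probabilistic language: the paper counts pairs $(\mu,M)$ by explicitly enumerating the choices (vertices in $A$, stubs at those vertices, target vertices in $B$, target stubs, then the remaining $(nr-m)!$ values of $\mu$), whereas you sum over the $A$-stub sets $S$ and compute $\Pr(\mu(S)\text{ good})$ via the uniformity of $\mu(S)$. Both routes arrive at ${n\choose m}^2 r^{2m} m!(rn-m)!/(rn)!$ with no substantive difference in the underlying idea.
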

 \proof  We adopt the arguments of \cite{Sch} to our case.
 First choose subset $\alpha\subset\{1,...,n\}$ of $m$ vertices
 in the group $A$.  There are ${n\choose m}$ choices like that.
 $\alpha$ induces the set $I=\cup_{i\in\alpha} \{r(i-1)+1,...,ir\}$
 of edges of cardinality $rm$.  From $I$ choose a set $J$ of $m$
 edges, so that $e_j,j\in J$ corresponds to the choice of one
 element in the group $\{r(i-1)+1,...,ir\}$, for each $i\in
 \alpha$.  There are $r^m$ of the choices of $J$.  Now we want to
 choose $\mu$ so that $\lceil \frac{\mu(j)}{r}\rceil, j\in J$ will be
 a subset of $m$ distinct elements
 $\beta=\cup_{j\in J}\{\beta_{\lceil\frac{\mu(j)}{r}\rceil}\}\subset \{1,...,n\}$.
 There is $m!{n \choose m}$ such choices of $\beta$.
 Then $\mu(j)\in \{\beta_{\lceil\frac{\mu(j)}{r}\rceil}(r-1)+1,...,
 \beta_{\lceil\frac{\mu(j)}{r}\rceil}r\}$ for each $j\in J$.
 Again there are $r^m$ such choices.  Thus we chose $\mu$ by determining the image
 of the elements in $J$ in $\{1,...,nr\}$, which is denoted by $\mu(J)$.  The rest of the
 of elements $\{1,...,rn\}\backslash J$ is mapped to
 $\{1,...,rn\}\backslash \mu(J)$.   The number of choices here is
 $(nr-m)!$.  Multiply all these choices to get the numerator of
 the right-hand side of (\ref{mexpcval1}).  Divide these number of
 choices by the number of permutations of $\{1,...,rn\}$ to deduce
 the lemma.  \qed

 Using the methods in the proof of Theorem \ref{main} we get the

 \begin{corol}\label{mexpcvalc}
 \begin{eqnarray*}
 \lim_{k\to\infty} \frac{\log E_2(m_k,n_k,r)}{2n_k}=\frac{1}{2}(p \log
 r -p\log p - 2(1-p)\log (1-p)
 +(r-p)\log (1 -\frac{p}{r})), \\
 {\rm if\;} \lim_{k\to\infty} n_k=\lim_{k\to\infty}
 m_k=\infty,\quad {\rm and}\quad
 \lim_{k\to\infty}\frac{m_k}{n_k}=p \in [0,1].
 \end{eqnarray*}
 \end{corol}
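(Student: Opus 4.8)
The plan is to feed the exact closed-form expression for $E_2(m,n,r)$ supplied by Lemma \ref{mexpcval}, namely (\ref{mexpcval1}), into the Stirling estimates already developed in the proof of Theorem \ref{main}. The essential simplification relative to the first measure is that $E_2$ is a \emph{single} product of factorials rather than the sum appearing in (\ref{mnaver1}); consequently the two-sided bracketing of Lemma \ref{lowb} is entirely unnecessary here, and a direct asymptotic expansion of one term will suffice.

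Concretely, I would first take logarithms of (\ref{mexpcval1}) and expand $\binom{n}{m}=\frac{n!}{m!(n-m)!}$, obtaining
$$\log E_2(m,n,r)=2\log(n!)-\log(m!)-2\log((n-m)!)+2m\log r+\log((rn-m)!)-\log((rn)!).$$
Next I would apply the version of Stirling's formula used in Theorem \ref{main}, writing each $\log(N!)=N\log N-N+O(\log N)$, and set $p_k=m_k/n_k$. Substituting $m=p_k n$, $n-m=(1-p_k)n$, and $rn-m=(r-p_k)n$, and crucially rewriting $\log(r-p_k)=\log r+\log(1-\frac{p_k}{r})$, each factorial contributes a multiple of $n\log n$, a linear multiple of $n$, and a term of the desired logarithmic shape.

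The one point demanding care --- though it is pure bookkeeping rather than a genuine obstacle --- is to verify that the divergent contributions cancel after dividing by $n_k$. A direct count shows the coefficient of $\log n_k$ is $2-p_k-2(1-p_k)+(r-p_k)-r=0$, and the aggregate of the linear Stirling $-N$ terms is likewise $-2+p_k+2(1-p_k)-(r-p_k)+r=0$; the polynomial and $O(\log n_k)$ remainders vanish upon division by $n_k$ exactly as in Theorem \ref{main}. What survives is
$$\frac{\log E_2(m_k,n_k,r)}{n_k}\longrightarrow p\log r-p\log p-2(1-p)\log(1-p)+(r-p)\log\!\left(1-\frac{p}{r}\right),$$
using $\lim_k p_k=p$ and the continuity of $\log$ (with the convention $0\log 0=0$ to cover $p\in\{0,1\}$). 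Dividing by $2$ produces the claimed limit and matches (\ref{defghrp}), so the corollary follows with no new analytic input beyond the already-established closed form and the Stirling bookkeeping.
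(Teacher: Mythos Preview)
Your proposal is correct and is precisely what the paper does: the paper's entire proof is the single line ``Using the methods in the proof of Theorem \ref{main},'' and your write-up spells out exactly that Stirling computation applied to the closed form (\ref{mexpcval1}). Your observation that Lemma \ref{lowb} is unnecessary here (since $E_2$ is a single product rather than a sum) is apt, and your bookkeeping for the cancellation of the $\log n_k$ and linear terms checks out.
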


 \section{Asymptotic Lower Matching Conjecture}
 For integers $2\le r, 1\le m\le n$ let $\mu(m,n,r)$ be defined
 by (\ref{defmuM}).
 Fix $p\in (0,1]$ and consider two increasing sequences
 $\{m_k\},\{n_k\}$ as in Theorem \ref{main}.
 Let  $\low_r(p)$ be the largest real number (possibly $\infty$) for which one
 always has the inequality
 \begin{equation}\label{defbeta}
 \liminf_{k\to\infty} \frac{\log\mu(m_k,n_k,r)}{n_k}\ge
 \low_r(p), \quad p \in (0,1].
 \end{equation}
 So  $\low_r(p)$ is the limit infimum over all possible values given by the left-hand
 side of (\ref{defbeta}).  Hence $gh_r(p)\ge low_r(p)$ for all
 $p\in [0,1]$.

 The equality (\ref{seqdefp}) and (\ref{nmatchin}) imply the
 equality
 \begin{equation}\label{beq1}
 \low_r(1)=\log \frac{(r-1)^{r-1}}{r^{r-2}}.
 \end{equation}
 (See for details \cite[\S5]{FG} and \cite[\S3]{FKLM}.)
 Hence, in the first version of this paper in 2005 we
 conjectured the \emph{Asymptotic Lower Matching Conjecture},
 abbreviated here as ALMC.

 \begin{con}\label{ascon}  (ALMC) For any $2\le r\in \N, p\in (0,1)$
  $\low_r(p)$ is equal to the right-hand side of (\ref{limval}):
 \[\low_r(p)=p \log
 r -p\log p - 2(1-p)\log (1-p)
 +(r-p)\log (1 -\frac{p}{r})
 \]
 \end{con}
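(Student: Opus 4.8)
The plan is to separate the claimed equality into its two inequalities, only one of which carries real content. The bound $\low_r(p)\le 2\,gh_r(p)$ (the right-hand side of Conjecture~\ref{ascon}) is automatic: a minimum never exceeds an average, so $\mu(m,n,r)\le E_1(m,n,r)$, and Theorem~\ref{main} gives $\frac{\log\mu(m_k,n_k,r)}{n_k}\le\frac{\log E_1(m_k,n_k,r)}{n_k}\to 2\,gh_r(p)$ along every admissible sequence; taking the infimum over sequences yields $\low_r(p)\le 2\,gh_r(p)$. All the difficulty is in the reverse inequality, which is the assertion of a \emph{uniform} exponential lower bound: for every $A\in\Delta(n,r)$ one must establish
\[
\per_m A\ \ge\ \exp\!\big(2n\,gh_r(m/n)+o(n)\big),
\]
with the error term uniform over $A$ and over $m/n$ in compact subsets of $(0,1)$. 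By Lemma~\ref{premiden}, $\per_m A=\phi(m,G)$ for the graph $G\in\cG_{\mult}(2n,r)$ encoded by $A$, so this is purely a question of minimizing the partial permanent over the transportation polytope $\Delta(n,r)$.

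The tool I would use is the capacity / stable-polynomial method that already delivers the endpoint case $m=n$. To $A=[a_{ij}]\in\Delta(n,r)$ attach the homogeneous polynomial $p_A(\x)=\prod_{i=1}^n\big(\sum_{j=1}^n a_{ij}x_j\big)$, whose perfect-matching coefficient is controlled by the capacity $\mathrm{cap}(p_A)=\inf_{\x>0}p_A(\x)/(x_1\cdots x_n)$; Schrijver's bound~(\ref{nmatchin}) and its Gurvits sharpening~(\ref{gurschr1}) are exactly lower bounds for this capacity. For partial matchings one replaces the symmetric normalization by a mass-$m$ normalization adapted to the target density. The ingredient that makes the coefficient-wise bound usable is real-rootedness: by the Heilmann--Lieb theorem $\Phi_G(t)=\sum_m\per_m(A)\,t^m$ has only real nonpositive roots, so the normalized sequence $\per_m(A)/\binom{n}{m}^2$ is ultra-log-concave and $m\mapsto\log\per_m(A)$ is concave. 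This is the route by which \cite{FG} already proves the conjecture at the lattice densities $p=\frac{r}{r+s}$, $s=0,1,\dots$, and I would take those as the base cases.

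Concavity then supplies a cheap interpolation. For a \emph{fixed} $A$ the function $m\mapsto\log\per_m(A)$ is concave, so the chord joining the validated lower bounds at two consecutive lattice densities is itself a valid lower bound on all intervening coefficients; since each lattice bound is uniform in $A$, so is the interpolated one. This produces a piecewise-linear $\ell_r(p)\le\low_r(p)$ that meets $2\,gh_r$ at every lattice density. But $gh_r$ is strictly concave, so $\ell_r$ lies \emph{strictly} below $2\,gh_r$ between consecutive nodes; since the nodes $\frac{r}{r+s}$ accumulate only at $0$, interpolation alone confirms the conjecture on a countable set and nowhere else.

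The crux --- and the reason the statement is posed as a conjecture rather than a theorem --- is to obtain a capacity lower bound that is tight at \emph{every} $p\in(0,1)$, not merely at the lattice densities. Concretely, I would analyse by Lagrange multipliers the density-$p$ capacity functional (the minimization above with the mass constraint tuned so that the optimizing scaling corresponds to matching density $m/n$ rather than to a perfect matching), and try to show both that its exponential rate equals $gh_r(p)$ and that the accompanying van der Waerden--type correction factor is $e^{o(n)}$ \emph{uniformly} in $p$. The known correction factors --- the constant $\frac{r!}{r^r}(\frac{r}{r-1})^{r(r-1)}$ of~(\ref{gurschr1}) and its partial analogues --- are proved subexponential only at the lattice densities, and removing the residual exponential loss for arbitrary $p$ is exactly what is lacking. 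I expect this to be the main obstacle: it almost certainly requires a sharpened, density-dependent stable-polynomial inequality, after which the concavity of the previous paragraph upgrades from interpolation to equality and $\low_r(p)=2\,gh_r(p)$ follows for all $p$.
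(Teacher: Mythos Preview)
The statement you are addressing is a \emph{conjecture} in the paper, not a theorem; the paper offers no proof and explicitly labels it as open (the ALMC). There is therefore no paper proof to compare your proposal against. What the paper does establish is the $r=2$ case (Theorem~\ref{valalmcr}, via the explicit formula for $\phi(m,C_{2n})$) and, quoting \cite{FG}, the lattice densities $p=r/(r+s)$ (Theorem~\ref{palrpc}); everything else remains conjectural.

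Your proposal correctly separates the easy direction $\low_r(p)\le 2\,gh_r(p)$ from the hard one, and your sketch of the capacity/stable-polynomial route together with Heilmann--Lieb log-concavity is an accurate summary of the known machinery. But as you yourself concede in the final paragraph, the decisive ingredient --- a density-dependent capacity inequality that is exponentially tight at \emph{every} $p\in(0,1)$, with a subexponential correction uniform in $p$ --- is not supplied. Your interpolation-by-concavity argument is valid as far as it goes, yet, exactly as you note, it yields only a piecewise-linear minorant that lies strictly below the strictly concave target $2\,gh_r(p)$ between consecutive lattice nodes. So the proposal is an honest and well-informed diagnosis of where the obstruction sits, not a proof; the gap you identify is precisely the gap that makes the statement a conjecture.
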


 \begin{theo}\label{valalmcr}  $\low_2(p)=gh_2(p)$ for all
 $p\in[0,1]$, where $gh_2(p)$ is defined by (\ref{defghrp}).
 \end{theo}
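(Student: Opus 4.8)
The plan is to reduce everything to an explicit asymptotic evaluation, since for $r=2$ the minimizing graph in $\cG_{\mult}(2n,2)$ is already determined. By the last assertion of Theorem \ref{uplowmatr=2}, applied to the $2n$ (even) vertices, every $2$-regular multi-bipartite graph $G$ on $2n$ vertices satisfies $\Phi_G(x)\succeq \Phi_{C_{2n}}(x)$, with equality only for $G=C_{2n}$. Comparing coefficients of $x^m$ gives $\phi(m,G)\ge\phi(m,C_{2n})$ for every $m$, so that $\mu(m,n,2)=\phi(m,C_{2n})$ for $m=1,\ldots,n$. Thus the minimizer is independent of $m$ and the minimum is a single closed expression, eliminating any combinatorial search.

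First I would record the exact value. From $q_k=p_k+xp_{k-2}$ and the coefficient formula $[x^m]p_k=\binom{k-m}{m}$ in (\ref{genmatpk}), one obtains $\phi(m,C_{2n})=\binom{2n-m}{m}+\binom{2n-m-1}{m-1}$, and the identity $\binom{2n-m-1}{m-1}=\frac{m}{2n-m}\binom{2n-m}{m}$ collapses this to
\[
\phi(m,C_{2n})=\frac{2n}{2n-m}\binom{2n-m}{m}.
\]
The key structural point is that $\mu(m_k,n_k,2)$ equals this explicit quantity for \emph{every} admissible sequence, so $\frac{\log\mu(m_k,n_k,2)}{2n_k}$ is a genuine limit, identical across all sequences with $m_k/n_k\to p$. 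Hence the infimum over sequences defining $\low_2(p)$ is simply this common limit, and only one limit must be evaluated.

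Finally I would run the Stirling estimate exactly as in the proof of Theorem \ref{main}. Writing $\alpha=m/(2n)\to p/2$, the factor $\frac{1}{2n}\log\frac{2n}{2n-m}$ is negligible, and Stirling applied to $\binom{2n-m}{m}=\frac{(2n-m)!}{m!\,(2n-2m)!}$ gives, after the $\log(2n)$ terms cancel,
\[
\low_2(p)=\lim_{k\to\infty}\frac{\log\phi(m_k,C_{2n_k})}{2n_k}
=-\tfrac{p}{2}\log\tfrac{p}{2}+\bigl(1-\tfrac{p}{2}\bigr)\log\bigl(1-\tfrac{p}{2}\bigr)-(1-p)\log(1-p).
\]
The last step is the algebraic check that this equals $gh_2(p)$ from (\ref{defghrp}): writing $-\tfrac{p}{2}\log\tfrac{p}{2}=-\tfrac{p}{2}\log p+\tfrac{p}{2}\log 2$ and using $\tfrac{2-p}{2}=1-\tfrac{p}{2}$ matches it term-by-term against $\tfrac12\bigl(p\log 2-p\log p-2(1-p)\log(1-p)+(2-p)\log(1-\tfrac{p}{2})\bigr)$. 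Because Theorem \ref{uplowmatr=2} hands us the minimizer outright, there is no genuine obstacle; the only work is the bookkeeping in the Stirling expansion---tracking the $O(1/n)$ errors and the cancellation of the $\log(2n)$ coefficient $(1-\tfrac{p}{2})-\tfrac{p}{2}-(1-p)=0$---together with the final identity, both routine and parallel to Theorem \ref{main}.
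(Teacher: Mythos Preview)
Your proof is correct and follows essentially the same approach as the paper: invoke Theorem \ref{uplowmatr=2} to identify $\mu(m,n,2)=\phi(m,C_{2n})=\binom{2n-m}{m}+\binom{2n-m-1}{m-1}$, then apply Stirling's formula as in Theorem \ref{main} to compute the limit. You simply supply more of the routine details (the closed form $\frac{2n}{2n-m}\binom{2n-m}{m}$, the explicit cancellation of the $\log(2n)$ coefficient, and the term-by-term match with $gh_2(p)$) that the paper leaves to the reader.
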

 \proof Theorem \ref{uplowmatr=2} yields that
 $$\mu(m,n,2)=\phi(m,C_{2n})={2n-m \choose m}+ {2n-m-1
 \choose m-1}.$$
 Use Stirling's formula as in the proof of Theorem \ref{main}
 to deduce the equality $\low_2(p)=gh_2(p)$.
 \qed

 Friedland and Gurvits \cite[\S5]{FG} have proved the following theorem

  \begin{theo}\label{palrpc}  Let $r\ge 3, s\ge 1$ be integers.
 Let $B_n, n=1,2,\ldots$ be a
      sequence of $n\times n$ doubly stochastic matrices, where each
      column of each $B_n$ has at most $r$-nonzero entries.
      Let $k_n \in [0,n]$, $n = 1,2,\ldots$
      be a sequence of integers with $lim_{n \to \infty}
      \frac{k_n}{n} = p\in (0,1]$.  Then
      \begin{eqnarray}\label{lasrppbp}
      \liminf_{n \to \infty} \frac{\log \per_{k_n} B_n}{2n} \geq
      \frac{1}{2}\left(-p\log p - 2(1-p)\log (1-p)\right) +\\
       \frac{1}{2}\left(
      (r+s-1)\log(1-\frac{1}{r+s})-(s-1+p)\log(1-\frac{1-p}{s})
       \right).\nonumber
      \end{eqnarray}
 Moreover,
 the Asymptotic Lower Matching Conjecture \ref{ascon}
 holds for $p_s=\frac{r}{r+s}, s=0,1,2,\ldots$.
 \end{theo}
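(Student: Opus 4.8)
The plan is to establish the quantitative bound (\ref{lasrppbp}) first and then deduce the ``Moreover'' clause as an elementary algebraic corollary. I identify each $G\in\cG_{\mult}(2n,r)$ realizing $\mu(m,n,r)$ in (\ref{defmuM}) with its incidence matrix $A(G)$ and set $B:=\frac1r A(G)$, which is doubly stochastic with at most $r$ nonzero entries per column. Since $\per_m A(G)=r^m\per_m B$ and $\per_m$ is invariant under transposition, a bound $\liminf_n\frac{\log\per_{k_n}B_n}{2n}\ge\beta_s(p)$ with $\beta_s(p)$ the right-hand side of (\ref{lasrppbp}) yields $\low_r(p)\ge p\log r+2\beta_s(p)$, the factor $r^m$ producing the $p\log r$ term of Conjecture \ref{ascon}. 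It thus suffices to prove (\ref{lasrppbp}) for an arbitrary doubly stochastic $B_n$ with column supports bounded by $r$.

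First I would recast the partial permanent as a monomial-coefficient problem. Passing to $B\trans$ (so each row has at most $r$ nonzero entries, using $\per_k B=\per_k B\trans$), form the homogeneous product of nonnegative linear forms
\[
P(x_1,\dots,x_n,y)=\prod_{i=1}^n\Bigl(y+\sum_{j=1}^n b_{ji}x_j\Bigr),
\]
so that the sum over $k$-subsets $T$ of the coefficient of $y^{n-k}\prod_{j\in T}x_j$ equals $\per_k B$. Each factor being a nonnegative linear form, $P$ is real stable with nonnegative coefficients, so Gurvits' capacity inequality applies: the coefficient of a fixed monomial $x^\alpha$ is at least $\mathrm{cap}_\alpha(P)\prod_j\frac{\alpha_j!}{\alpha_j^{\alpha_j}}$, where $\mathrm{cap}_\alpha(P)=\inf_{x,y>0}P/(y^{n-k}\prod_{j\in T}x_j)$. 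The homogenizing variable $y$ models the $n-k$ unmatched rows and contributes the loss factor $\frac{(n-k)!}{(n-k)^{n-k}}$.

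Next I would estimate the capacity by the convex (Lagrange-multiplier) analysis underlying the Schrijver--Valiant bound (\ref{nmatchin}): minimizing $P$ over the positive orthant subject to the monomial normalization reduces, by double stochasticity, to a low-dimensional problem controlled by the support bound on each form. The crucial refinement is to replace the crude van der Waerden factor by the support-sensitive Schrijver--Valiant estimate and to introduce a free integer parameter $s\ge1$ weighting the slack variable $y$; this effectively enlarges the support of the optimized forms to $r+s$ and is the source of the terms $(r+s-1)\log(1-\frac1{r+s})$ and $-(s-1+p)\log(1-\frac{1-p}{s})$ in (\ref{lasrppbp}). Assembling the capacity bound with the combinatorial prefactor of the same shape as in (\ref{mexpcval1}), which counts the matched rows and columns and their pairing, and taking $\frac1{2n}\log(\cdot)$, the $n\log n$ contributions cancel exactly as in the proof of Theorem \ref{main}; Stirling's formula then leaves the right-hand side of (\ref{lasrppbp}), the entropy part $-p\log p-2(1-p)\log(1-p)$ coming from the binomial/factorial prefactor and the remainder from the capacity rate. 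As a consistency check, $p=1$ kills the slack and recovers Schrijver's bound (\ref{nmatchin}), i.e.\ (\ref{beq1}).

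Finally, for the ``Moreover'' assertion I would substitute $p=p_s=\frac{r}{r+s}$ directly into $\beta_s(p)$. A short computation gives $1-\frac{p_s}{r}=1-\frac{1-p_s}{s}=\frac{r+s-1}{r+s}$, together with $r-p_s=\frac{r(r+s-1)}{r+s}$ and $s-1+p_s=\frac{s(r+s-1)}{r+s}$, so the two logarithmic terms collapse to $\frac{r(r+s-1)}{r+s}\log\frac{r+s-1}{r+s}=(r-p_s)\log(1-\frac{p_s}{r})$. After the $p\log r$ correction, the bound therefore meets the conjectured value of $\low_r(p_s)$ in Conjecture \ref{ascon} exactly. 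Since the averaging estimate (\ref{limval}) yields the standing inequality $\low_r(p)\le gh_r(p)$ for every $p$, this matching lower bound forces equality at each $p_s=\frac{r}{r+s}$, establishing the ALMC there. The genuine obstacle is the capacity estimate: pinning down the sharp $s$-dependent exponential rate — equivalently, choosing the slack weighting so that the effective support is exactly $r+s$ and the constrained minimum is attained — is delicate, whereas the Stirling asymptotics and the closing substitution are routine.
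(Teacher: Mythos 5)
You should note at the outset that the paper contains no proof of Theorem \ref{palrpc} to compare against: it is quoted verbatim from Friedland--Gurvits \cite[\S5]{FG}. The parts of your argument that the paper does implicitly rely on are correct and match the intended derivation: scaling $B=\frac1r A(G)$ gives $\per_m A(G)=r^m\per_m B$, so a rate $\beta_s(p)$ for $\frac{\log\per_{k_n}B_n}{2n}$ yields $\low_r(p)\ge p\log r+2\beta_s(p)$ in the normalization of (\ref{defbeta}); your substitution $p_s=\frac{r}{r+s}$, the identity $1-\frac{p_s}{r}=1-\frac{1-p_s}{s}=\frac{r+s-1}{r+s}$, and the collapse of the two logarithms to $(r-p_s)\log(1-\frac{p_s}{r})$ all check out; and the matching upper bound from the expectation (Theorem \ref{main}, Lemma \ref{mexpcval}) forces equality at each $p_s$, with $s=0$ covered separately by Schrijver's bound (\ref{nmatchin}), i.e.\ (\ref{beq1}). (Minor slip inherited from the paper: with denominator $n_k$ in (\ref{defbeta}) the averaging upper bound is $2gh_r(p)$, which is the conjectured value, not $gh_r(p)$.)

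The genuine gap is in your proof of the core inequality (\ref{lasrppbp}) itself, in two specific places. First, Gurvits' capacity inequality bounds \emph{one} coefficient of $P(x,y)=\prod_i(y+\sum_j b_{ji}x_j)$, and for a fixed $k$-subset $T$ the capacity $\mathrm{cap}_\alpha(P)$ can be zero (take $T$ disjoint from the union of the supports of most columns), so there is no uniform per-monomial lower bound that you can multiply by a count of shape ${n\choose k}^2 k!$ to produce the entropy term $-p\log p-2(1-p)\log(1-p)$; the sum $\per_k B$ must be handled by a single aggregate object. In \cite{FG} that object is an \emph{augmented} doubly stochastic matrix: $B$ is embedded in a larger matrix whose extra rows and columns carry the residual mass $1-p$ spread over $s$ new nonzero entries per column, so the column supports become $r+s$, and the support-sensitive Schrijver--Valiant/Gurvits bound applied to the \emph{full} permanent of the augmented matrix simultaneously generates the binomial entropy and the two $s$-dependent terms. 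Second, and relatedly, in your sketch the integer $s$ never actually enters the construction: ``weighting the slack variable $y$'' is undefined (a scalar weight $ty$ is absorbed by the normalization, and a single homogenizing variable raises each factor's support from $r$ to $r+1$, never to $r+s$), so the terms $(r+s-1)\log(1-\frac{1}{r+s})$ and $-(s-1+p)\log(1-\frac{1-p}{s})$ are read off the target rather than derived. You flag the capacity estimate yourself as the ``genuine obstacle''; but that estimate, together with the aggregation device that makes it apply to $\per_k$ rather than to one coefficient, \emph{is} the theorem, so the proposal as it stands proves only the reduction and the closing algebra, not the bound.
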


 Small lower bounds for $\low_r(p)-gh_r(p)$ for all values of $p\in [0,1]$ are given in
 \cite[\S3]{FKLM}.  Use Stirling's formula, as in the proof of
 Theorem \ref{main} to deduce.

 \begin{prop}\label{finconiminf}  Assume that the inequality
 (\ref{mmatchincon}) holds for all $m\in [2,n]\cap\N$, $3\le r\in \N$
 and all $n\ge N(r)$.  Then ALMC holds.
 \end{prop}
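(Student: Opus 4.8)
The plan is to prove the two-sided estimate $\low_r(p)\le R(p)$ and $\low_r(p)\ge R(p)$, where $R(p):=p\log r-p\log p-2(1-p)\log(1-p)+(r-p)\log(1-\frac{p}{r})$ is the right-hand side of (\ref{limval}), and then read off the equality $\low_r(p)=R(p)$ as the assertion of ALMC (Conjecture \ref{ascon}). The upper bound requires no new hypothesis: the minimum $\mu(m,n,r)$ never exceeds the average $E_1(m,n,r)$ (the measure (\ref{distg}) is supported on all of $\cG_{\textrm{mult}}(2n,r)$), and Theorem \ref{main} gives $\lim_{k\to\infty}\frac{\log E_1(m_k,n_k,r)}{n_k}=R(p)$ along any sequence obeying (\ref{seqdefp}). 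Hence $\liminf_{k\to\infty}\frac{\log\mu(m_k,n_k,r)}{n_k}\le R(p)$ for every such sequence, so by the definition (\ref{defbeta}) we already have $\low_r(p)\le R(p)$ for all $p\in(0,1)$.

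For the reverse inequality I would insert the assumed bound into the definition of $\mu$. Minimizing (\ref{mmatchincon}) over $G\in\cG_{\textrm{mult}}(2n,r)$ yields, for every $n\ge N(r)$ and every $2\le m\le n$, the lower bound $\mu(m,n,r)\ge L(m,n):={n\choose m}^2(\frac{nr-m}{nr})^{rn-m}(\frac{mr}{n})^m$. Fix a sequence $\{(m_k,n_k)\}$ as in (\ref{seqdefp}) with $p\in(0,1)$; for all large $k$ we have $n_k\ge N(r)$ and $m_k\ge 2$, so the bound applies term by term.

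The computational core is to show $\lim_{k\to\infty}\frac{1}{n_k}\log L(m_k,n_k)=R(p)$, and this is exactly the Stirling estimate already performed in the proof of Theorem \ref{main}. Writing $p_k=m_k/n_k\to p$ and using the two-sided form of Stirling's formula, the factor $2\log{n_k\choose m_k}$ contributes $-2p\log p-2(1-p)\log(1-p)$, the factor $(\frac{nr-m}{nr})^{rn-m}$ contributes $(r-p)\log(1-\frac{p}{r})$, and $(\frac{mr}{n})^m$ contributes $p\log p+p\log r$; the two $p\log p$ terms combine to $-p\log p$, reproducing $R(p)$ exactly. As in Theorem \ref{main}, the polynomial prefactors and the $O(1/n_k)$ corrections coming from $\log(x+a)=\log x+O(1/x)$ disappear after division by $n_k$, so no genuinely new estimate is needed.

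Combining the two directions gives $\low_r(p)=R(p)$ for all $p\in(0,1)$, which is the statement of ALMC. The one point I would verify most carefully is that the hypothesis supplies a bound that is uniform along the whole sequence rather than only along a subsequence: since (\ref{mmatchincon}) is assumed for \emph{every} $n\ge N(r)$ and \emph{every} $m\ge 2$, and since $L(m,n)$ depends on $(m,n)$ only through ratios whose limiting behaviour is governed by $p$, the liminf is genuinely bounded below by $R(p)$. This uniformity is the only step where the full strength of the stated assumption (all $m$, all large $n$) is actually used; everything else reduces to the Stirling analysis of Theorem \ref{main}.
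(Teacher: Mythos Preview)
Your proposal is correct and follows essentially the same approach as the paper, whose entire proof is the single sentence ``Use Stirling's formula, as in the proof of Theorem \ref{main} to deduce.'' You have simply filled in the details: the lower bound $\low_r(p)\ge R(p)$ comes from applying Stirling to the assumed inequality (\ref{mmatchincon}), exactly as the paper indicates, and your explicit term-by-term breakdown of $\frac{1}{n}\log L(m,n)$ is accurate. Your separate derivation of the upper bound $\low_r(p)\le R(p)$ via $\mu\le E_1$ is also correct, though the paper records this inequality as a general fact (``Hence $gh_r(p)\ge \low_r(p)$'') before stating the proposition, so it is not part of what the proposition needs to establish.
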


 \section{Maximal matchings in $\cG_{\mult}(2n,r)$ and $\cG(2n,r)$}

 \begin{prop}\label{maxnummatmulr}  Let $G=(V_1\cup V_2,E)$ be a bipartite multigraph
 where $V_1, V_2$ are the two groups of the set of vertices.
 Let $\#V_1=n$ and assume that the degree of each vertex in $V_1$ is $r\ge 2$.
 Then
 \begin{equation}\label{upmatchmubgrph}
 \phi(m,G)\le {n \choose m} r^m \textrm{ for each } m=1,\ldots,n.
 \end{equation}
 Assume that $\#V_2=n$.  Then for $m\ge 2$ equality holds if and only if $G=nH_r$, i.e.
 $A(G)=r I_n$.  In particular (\ref{Mmnrfor}) holds.
 \end{prop}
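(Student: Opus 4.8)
The plan is to prove the inequality by an overcounting argument and then analyze when it is tight. First I would set up a correspondence between $m$-matchings of $G$ and pairs $(S,\sigma)$, where $S\subseteq V_1$ is the set of $m$ vertices of $V_1$ covered by the matching and $\sigma$ records, for each vertex of $S$, which of its incident edges is used. Since every vertex of $V_1$ has degree exactly $r$ (counting multiplicities in the multigraph, so parallel edges count as distinct edges), there are $r$ choices of an incident edge at each vertex of $S$, giving at most $r^{m}$ selections $\sigma$ for each of the ${n\choose m}$ subsets $S$. Every $m$-matching determines a unique such pair, and distinct matchings give distinct pairs, so $\phi(m,G)\le {n\choose m}r^{m}$, which is (\ref{upmatchmubgrph}).

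The key observation for equality is to identify exactly which pairs $(S,\sigma)$ arise from matchings. A pair yields $m$ edges that already have pairwise distinct endpoints in $V_1$ (they issue from the distinct vertices of $S$); hence these edges form a matching if and only if their endpoints in $V_2$ are also pairwise distinct. Consequently $\phi(m,G)={n\choose m}r^{m}$ if and only if every pair $(S,\sigma)$ is valid, i.e. for every $m$-subset $S$ and every selection $\sigma$ the chosen edges land on distinct vertices of $V_2$.

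Now I would use the hypothesis $m\ge 2$ together with $\#V_2=n$. Given two distinct vertices $u,v\in V_1$, for $m\ge 2$ I can choose an $m$-subset $S$ containing both, any edges $e$ at $u$ and $f$ at $v$, and complete $\sigma$ arbitrarily on $S\setminus\{u,v\}$; validity forces $e$ and $f$ to have different endpoints. Since $e,f$ are arbitrary, the neighbourhoods of $u$ and $v$ in $V_2$ are disjoint. Thus the $n$ neighbourhoods $N(v)$, $v\in V_1$, are pairwise disjoint nonempty subsets of the $n$-element set $V_2$, and a pigeonhole step forces each $N(v)$ to be a single vertex with $v\mapsto N(v)$ a bijection. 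As each vertex of $V_1$ has degree $r$ and a single neighbour, all $r$ of its edges are parallel, so after relabelling $A(G)=rI_n$, i.e. $G=nH_r$. The reverse implication is immediate, since in $nH_r$ every selection is automatically valid. Finally, as $nH_r\in\cG_{\mult}(2n,r)$, the bound together with this equality case gives $M(m,n,r)={n\choose m}r^{m}$, which is (\ref{Mmnrfor}).

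The step I expect to be the main obstacle is the equality characterization: one must check that the reduction to pairwise-disjoint neighbourhoods genuinely uses $m\ge 2$ (for $m=1$ every pair is trivially a matching, so $\phi(1,G)=nr={n\choose 1}r$ for every admissible $G$ and the characterization would fail), and that the hypothesis $\#V_2=n$ is exactly what upgrades ``disjoint neighbourhoods'' into the rigid structure $rI_n$.
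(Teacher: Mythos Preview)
Your proof is correct and follows essentially the same approach as the paper: both argue the inequality by counting pairs (an $m$-subset of $V_1$, a choice of one incident edge per vertex), and both obtain the equality characterization by observing that a shared $V_2$-neighbour between two distinct vertices of $V_1$ produces an invalid pair once $m\ge 2$. Your write-up is somewhat more explicit than the paper's---you spell out the bijection criterion for equality and the pigeonhole step that turns ``pairwise disjoint neighbourhoods in an $n$-element $V_2$'' into $A(G)=rI_n$---but the underlying argument is the same.
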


 \proof  Let $M\subset E$ be an $m$-matching.
 Then $M$ covers exactly $U\subset V_1$ vertices of cardinality
 $m$.  Then number of choices of $U$ is $n \choose m$.
 Let $v\in U$.  Then $v$ can be covered by $r$ edges.  Hence
 (\ref{upmatchmubgrph}) holds.

 Suppose that $m\ge 2$ and $\#V_2=n$.
 Let $w\in V_2$ and assume that $w$ connected to two distinct
 vertices $v_1,v_2\in V_1$ by the edges $e_1,e_2$.  Then these
 two edges can not appear together in any $m$-matchings.  Hence
 for this $G$ one has a strict inequality in
 (\ref{upmatchmubgrph}).  Thus, if $\#V_2=n$ and $m\ge 2$ equality holds in
 (\ref{upmatchmubgrph}) if and only if $G=nH_r$.  \qed

 The inequality (\ref{upmatchmubgrph}) for $G\in \cG(2n,r)$ was
 used in \cite{FKLM}.  In the first version of this paper we
 conjectured that $\Lambda(m,n,r):=\max _{G\in \cG(2n,r)}
 \phi(m,G)$ is achieved for the maximal graph $qK_{r,r}$ , i.e.
 disjoint unions of $q$ complete bipartite graphs on $2r$
 vertices, if $n\equiv 0\;mod \;4$.

  We state a generalization of the conjecture (\ref{upmatcon}) for $\cG(2n,r)$
 when $n$ is not divisible by $r$:
 \begin{equation}\label{conjmaxmatreg}
 \phi(m,G)\le \phi(m,\lfloor \frac{n}{r}\rfloor K_{r,r}\cup
 (n-r\lfloor \frac{n}{r}\rfloor) H_r) \textrm{ for any } G\in \cG(2n,r).
 \end{equation}

 Theorem \ref{uplowmatr=2} yields that the validity of the
 conjecture (\ref{conjmaxmatreg}) for $r=2$.
 See \cite{FKLM} for the asymptotic version of the
 conjectured inequality (\ref{conjmaxmatreg}).

\section{Computational results}

\subsection{The Lower Matching Conjecture for finite graphs}

For small $r$-regular bipartite graphs on $2n$ vertices we have
tested the following finite analogue of the lower matching
conjecture.
\begin{equation}\label{slowmcon}
    \phi(G,m)\ge \varphi(n,r,m)=\left ( 1+\frac{1}{r n}  \right)^{r n-1}
    (1-\frac{m}{rn})^{rn-m} (\frac{mr}{n})^m {n\choose m}^2
\end{equation}
Note that as $n$ grows this bound is asymptotically exact for
1-edge matchings, and the convergence is faster for larger $r$.

In order to test the conjecture we computed the matching
generating polynomials for all bipartite regular graphs on
$2n\leq 20$ vertices and compared with the bound. The bound
held for all such graphs.

For $2n \geq 21$ the number of bipartite regular graphs is too
large for a complete test of all graphs, the computing time for
each graph also grows exponentially, so we instead tested  the
conjecture for graphs of higher girth. The combinations of
degree and girth are given in table \ref{dg-tab}. Again the
conjecture held for all such graphs.

\begin{figure}[b]
    \begin{tabular}{|r|c|c|c|c|c|c|c|c|c|c|c|c|c|c|c|c|c}
	\hline
	$2n$   & 22 & 24 & 26 & 28 & 30 & 32 & 34 & 36  \\
	\hline
	$r=3$ & 6  & 6  & 6  & 6  & 6  & 7  & 7  & 7   \\
	$r=4$ &    &    & 6  & 6  & 6  & 6  & 6  &     \\
	
	\hline
    \end{tabular}

    \caption{Lower bound for the girth of the regular bipartite graphs
    of order greater than 20 used in our tests. An empty entry
    means that no graphs of that order and degree were used.}
\end{figure}\label{dg-tab}

 \subsection{The Upper Matching Conjecture for Cubic graphs}

 We have checked the upper matching conjecture for $r=3$ and $2n$
 up to 24 by computing the matching generating polynomials for
 all connected bipartite cubic graphs, up to an isomorphism, in
 this range. For $2n=6$ and $2n=8$ there is only one cubic
 bipartite graph of the given size: $K_{3,3}$ and the
 3-dimensional hypercube $Q_{3}$ respectively. For $2n=10$ there
 are two graphs to consider and they turn out to have
 incomparable matching generating functions. The first graph
 $G_{1}$ is shown in Figure \ref{fig:g10} and the second graph
 is the 10 vertex M\"{o}bius ladder $M_{10}$. ($M_{10}$ consists
 of two copies of path of length $5$: $1-2-3-4-5$, denoted by
 $(P_5,1)$ and $(P_5,2)$, where first one connects $(i,1)$ and
 $(i,2)$ by an edge for $i=1,\ldots,5$, and then one connects
 $(1,1)$ with $(5,2)$ and $(1,2)$ with $(5,1)$.)

Their matching generating polynomials are:
 \begin{eqnarray*}
 \psi(x,G_{1}):=1+15x+75x^2+145x^3 +96x^4 +12x^5,\\
 \psi(x,M_{10}):=1+15x+75x^2+145x^3 + 95x^4 + 13x^5.
 \end{eqnarray*}

For $2n$ from 12 to 24 the extremal graphs, with the maximal
$\phi(l,G)$, are for the form

\begin{equation}\label{3regmax12-24}
     \begin{array}{ll}
    \frac{2n}{6}K_{3,3} & \textrm{if}\ 6|2n \\
    \frac{2n-8}{6}K_{3,3}\bigcup Q_{3} & \textrm{if}\ 6|(2n-2) \\
    \frac{2n-10}{6}K_{3,3}\bigcup (G_{1}\ \textrm{or}\ M_{10}  ) &
\textrm{if}\ 6|(2n-4) \\
     \end{array}
\end{equation}

So for $2n=10,22$ we do not have a unique extremal graph, which
maximizes all $\phi(l,G)$. It seems natural to conjecture that
the three graph families given here together make up all the
extremal graphs for all $n$.

\begin{figure}[tbp]
     \includegraphics*[width=6cm,height=4cm]{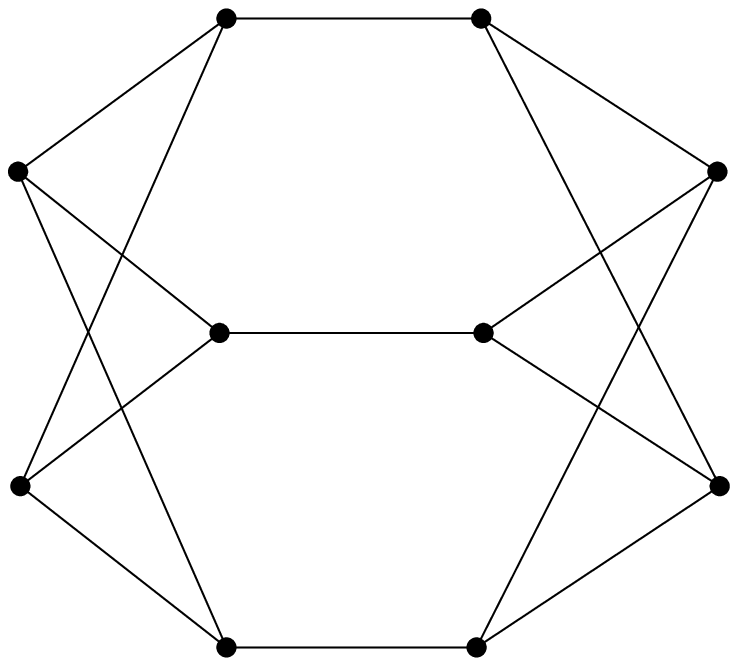}
     \caption{$G_{1}$}
     \label{fig:g10}
\end{figure}

 \section{Exact values for small matchings}

 \begin{theo}\label{exactsmall}
     Assume that $G$ is a biparite $r$-regular graphs on $2n$ vertices
     and that $G$ contains $a_{4}(G)$ 4-cycles, then
     \begin{enumerate}
 	\item
 	$\phi(G,1) = r n$ 	
 	\item	
 	$\phi(G,2) ={nr\choose 2} - 2n {r \choose 2} =\frac{r n(r n
 -(2r-1) )}{2}$

 	\item	
 	$\phi(G,3) = {nr \choose 3 }-2n{r\choose
 3}-nr(r-1)^{2}-2n{r\choose
 	2}(nr-2r-(r-2)) $ 	
 	\item $\phi(G,4) =p_{1}(n,r)+a_{4}(G)$
 where
 \begin{equation}\label{defp1nr}p_{1}(n,r)=
 	\frac{n^{4}r^{4}}{24}+
 	\frac{n^{3}r^{3}}{4}(1-2r)+
 	\frac{n^{2}r^{2}}{24}\left(19-60r+52r^{2}\right)+
 	nr\left(\frac{5}{4}-5r+7r^{2}-\frac{7r^{3}}{2}\right).
 \end{equation}

     \end{enumerate}

 \end{theo}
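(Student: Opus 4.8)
My plan is to treat all four parts with one device: count an $m$-matching as an $m$-subset of the $rn$ edges, classify the $m$-subsets that \emph{fail} to be matchings by the isomorphism type of the subgraph they span, and count each type locally. Throughout I use that $G$ is simple and bipartite, so it has no loops, no multiple edges and no odd cycle; in particular its shortest cycle has length at least $4$, which is exactly why the number of $4$-cycles is the first non-local invariant to appear.

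For (1) a $1$-matching is a single edge, and since each of the $n$ vertices in one colour class has degree $r$ the edge count is $rn$. For (2) there are ${rn\choose 2}$ pairs of edges, and a pair fails to be a matching precisely when the two edges share a vertex; each vertex gives ${r\choose 2}$ such adjacent pairs and no pair is double-counted, so $\phi(G,2)={rn\choose 2}-2n{r\choose 2}$. For (3) I subtract from ${rn\choose 3}$ the three types of $3$-subsets with an adjacency: the star $K_{1,3}$ (three edges at one vertex, $2n{r\choose 3}$ of them); the path $P_4$ (three edges in a path, $rn(r-1)^2$ of them, since the middle edge is any of the $rn$ edges and each end extends in $r-1$ ways, the two new ends lying in opposite colour classes and hence automatically distinct); and a cherry $P_3$ with one disjoint edge. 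A fixed cherry occupies three vertices incident to exactly $3r-2$ edges (inclusion--exclusion: $3r$ minus the two shared cherry-edges, no edge meeting two same-class ends), so it extends by $rn-(3r-2)=rn-2r-(r-2)$ disjoint edges, and there are $2n{r\choose 2}$ cherries. Subtracting these gives the stated formula; crucially every count here is a polynomial in $n$ and $r$.

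The substance is (4). The plan is again $\phi(G,4)={rn\choose 4}-(\text{bad }4\text{-subsets})$, a $4$-subset being bad if it contains an adjacent pair. Partitioning a bad subset into components by edge-count ($4$, $3{+}1$, $2{+}2$, $2{+}1{+}1$) and using that the only connected subgraphs on at most four edges in a simple bipartite graph are the trees $P_3,P_4,K_{1,3},P_5,K_{1,4}$ and the ``chair'' $T$ (a degree-$3$ vertex carrying two leaves and a pendant path of length two), together with the single cycle $C_4$, the bad types are: the connected graphs $P_5$, $T$, $K_{1,4}$, $C_4$; the graphs $P_4\cup P_2$ and $K_{1,3}\cup P_2$; the graph $P_3\cup P_3$; and the graph $P_3\cup P_2\cup P_2$. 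Each is counted by the same local bookkeeping as in (3).

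The main obstacle is that these local counts are \emph{not} all polynomial in $n,r$: whenever two vertices forced into opposite colour classes may be joined by a further edge, the count picks up a contribution proportional to the number of $4$-cycles. For example, extending a $P_4=a\!-\!b\!-\!c\!-\!d$ by a disjoint edge, the vertex set $\{a,b,c,d\}$ meets $4r-3$ edges \emph{unless} the chord $\{a,d\}$ is present, closing a $C_4$; summing the indicator of $\{a,d\}\in E$ over all $P_4$ gives $4a_4(G)$, since each $4$-cycle produces four such paths. The same effect appears in the tree counts for $P_5$ and $T$ (a degree-product count over-counts configurations whose extreme vertices coincide or are adjacent) and in $P_3\cup P_3$ and $P_3\cup P_2\cup P_2$ (the independent parts may close a quadrilateral), while the connected type $C_4$ contributes $a_4(G)$ outright. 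The real work is to perform all seven counts, collect the purely polynomial parts into a single polynomial, and verify that the several multiples of $a_4(G)$ cancel to net coefficient exactly $+1$ in $\phi(G,4)$; checking that the polynomial remainder equals $p_1(n,r)$ of (\ref{defp1nr}) is then a finite, if lengthy, algebraic computation. I expect tracking the coefficient of $a_4(G)$ across the bad types to be the delicate step, since each short-cycle correction in a tree count must be separated cleanly from the corrections that stay polynomial in $(n,r)$, with loops, triangles and multi-edges excluded by simplicity and bipartiteness.
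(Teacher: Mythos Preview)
Your proposal is correct and, for parts (1)--(3), coincides with the paper's argument word for word. For part (4) the strategies diverge slightly in bookkeeping, though not in substance.

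You subtract from ${rn\choose 4}$ the exact number of bad $4$-subsets, partitioned by isomorphism type into eight classes ($P_5$, the chair $T$, $K_{1,4}$, $C_4$, $P_4\cup P_2$, $K_{1,3}\cup P_2$, $P_3\cup P_3$, $P_3\cup P_2\cup P_2$), and then track the $a_4(G)$-corrections through each count. The paper instead writes, for $l(H)$ the number of cherries $P_3$ in a $4$-edge subgraph $H$,
\[
\#\{\text{bad }H\}=\sum_{l(H)\ge 1} l(H)\;-\;\sum_{l(H)>1}(l(H)-1),
\]
observes that the first sum equals $2n{r\choose 2}{nr-2\choose 2}$ (choose a rooted cherry, then any two further edges), and then enumerates only the types with $l(H)\ge 2$. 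This is the same list as yours \emph{minus} $P_3\cup P_2\cup P_2$, which has $l=1$ and is therefore handled automatically; the paper also splits $P_3\cup P_3$ into two cases according to whether the two root vertices lie in the same or different colour classes. The upshot is that the paper trades one awkward direct count (your $P_3\cup P_2\cup P_2$, where the two disjoint edges may or may not close a $C_4$ with the cherry) for one easy global sum, and then weights the remaining types by $l(H)-1$ rather than $1$. The $a_4(G)$-chasing is essentially identical in both approaches, and both terminate in the same polynomial identity for $p_1(n,r)$.
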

 \begin{proof}
     \begin{enumerate}
 	\item This is just the number of edges in $G$. 	
 	\item There are ${nr\choose 2}$ 2-edge subsets of $E(G)$.
 Such a subset is not a matching if it forms a three vertex path
 $P_{3}$.
 	Given a $P_{3}\subset G$ we call the vertex of degree 2 the
 	root. The number of $P_{3}$'s in $G$ is $2n {r \choose 2}$,
 	since there are $2n$ choices for the root vertex and at
 that
 	vertex there are ${r \choose 2}$ ways of choosing  two
 edges. 	
 	\item As in the previous case three edges in $G$ can be
 	chosen in $ {nr \choose 3 }$ ways. There are three
 three-edge
 	subgraphs which are not a matching, depicted in
 Figure \ref{bad3}.
 	The number of 4-vertex stars, $2n{r\choose 3}$, is counted
 as in
 	the previous case. The number of $P_{4}$'s is
 $nr(r-1)^{2}$,
 	since the middle edge can be chosen in $nr$ ways and the
 two
 	remaining edges in $r-1$ ways each.	
 	The number of subgraphs $P_{3}\cup K_{2}$ is $2n{r\choose
 	2}(nr-2r-(r-2))$, since the $P_{3}$ can be chosen as in the
 	previous case, and the $K_{2}$ can be chosen among the
 $(nr-2r-(r-2))$
 	edges which are not incident with any of the vertices in
 the
 	$P_{3}$. 	
 	\item  Let $\cE_4(G)$ be the subset of all subgraphs of $G\in
 \cG(2n,r)$ consisting of $4$ edges.
 Then $\#\cE_4(G)={nr \choose 4 }$.  For $H\in\cE_4(G)$ let
 $l(H)\ge 0$ be the number $P_3$ subgraphs of $H$.
 $H\in \cE_4(G)$ is a matching if and only $l(G)=0$.
 There are  $2n{r \choose 2 }{nr-2 \choose
 	2 }$ graphs $H \in \cE_4(G)$ which contain at least one $P_{3}$
 	with a specified root vertex, since there are $2n$ ways to
 place the root of a $P_{3}$ and ${nr-2 \choose
 	2 }$ ways to choose the remaining two edges.
 Note that
 $2n{r \choose 2 }{nr-2 \choose 2 }=\sum_{H\in\cE_4(G),l(H)\ge 1} l(H)$.
 Thus, the correct of $4$-matches is
 \begin{equation}\label{numb4mat}
 {nr \choose 4}-2n{r \choose 2 }{nr-2 \choose
 	2 }+\sum_{H\in \cE_4, l(H)>1}(l(H)-1).
 \end{equation}
 	In Figure \ref{bad4} we display all subgraphs $H$ with
 $l(H)>1$.
 	The number of copies of each graph and its number of
 $P_{3}$'s is
 	\begin{enumerate}
 	    \item[S1] Number: $ 2n{r \choose 4}  $ ,$P_{3}$'s
 ${4\choose 2}$
 	    \item[S2] Number: $ 2n{r \choose 3}3(r-1)  $ ,$P_{3}$'s
 $1+{3\choose 2}$
 	    \item[S3] Number: $ 2n{r\choose 3}(nr-4r+3))  $
 ,$P_{3}$'s ${3\choose 2}$
 	    \item[S4] Number: $ 2n{r\choose 2}(r-1)^{2}-4a_{4}(G)
 $ ,$P_{3}$'s 3
 	    \item[S5] Number: $ a_{4}(G) $ ,$P_{3}$'s 4
 	    \item[S6] Number: $ n(n-2){r\choose
 2}^{2}-\frac{1}{2}(\# S2)  $ ,$P_{3}$'s 2
 	    \item[S7] Number: $ 2{n\choose 2}{r\choose
 2}^{2}-2a_{4}(G)-(\#S4)  $ ,$P_{3}$'s 2
 	    \item[S8] Number:
 $(nr(r-1)^{2}-4a_{4}(G))(nr-4r+3)+4a_{4}(G)(nr-4r+4)  $
 ,$P_{3}$'s 2
 	\end{enumerate}		
 Use the above formulas in (\ref{numb4mat}) to obtain
 a rather messy expression for $\phi(G,4)$.
 After some simplification we obtain is the formula we have in
 the theorem.
     \end{enumerate}
 \end{proof}

 \begin{figure}[tbp]
      \includegraphics*[width=5cm,height=3cm]{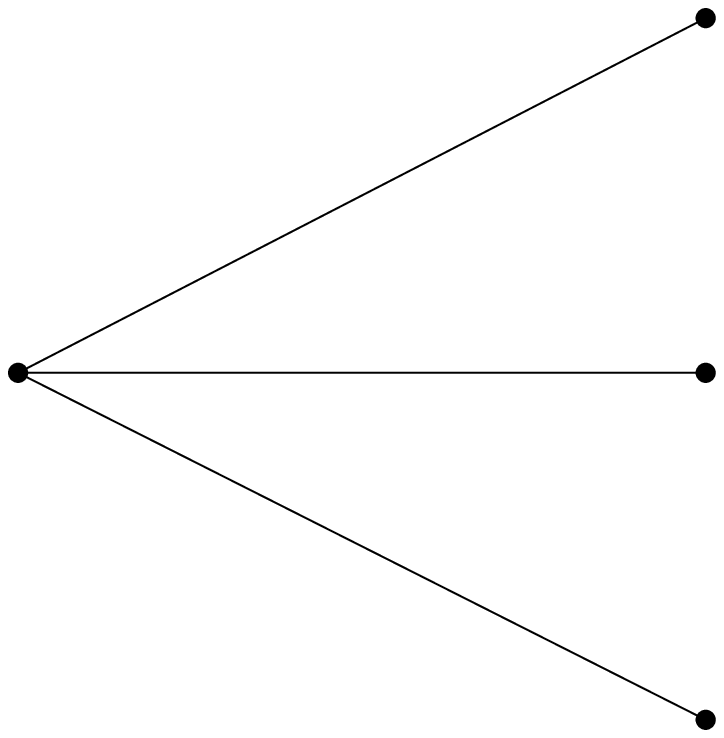}
      \includegraphics*[width=5cm,height=3cm]{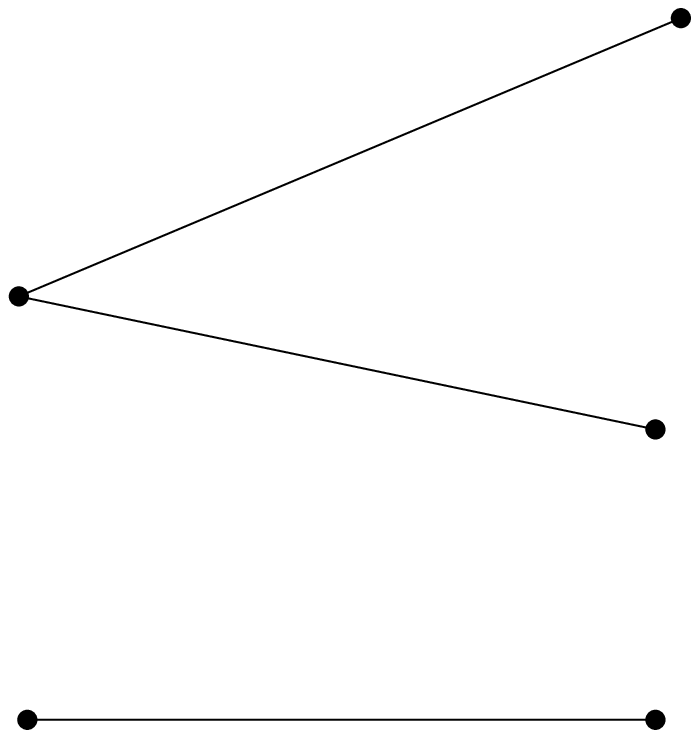}
      \includegraphics*[width=5cm,height=3cm]{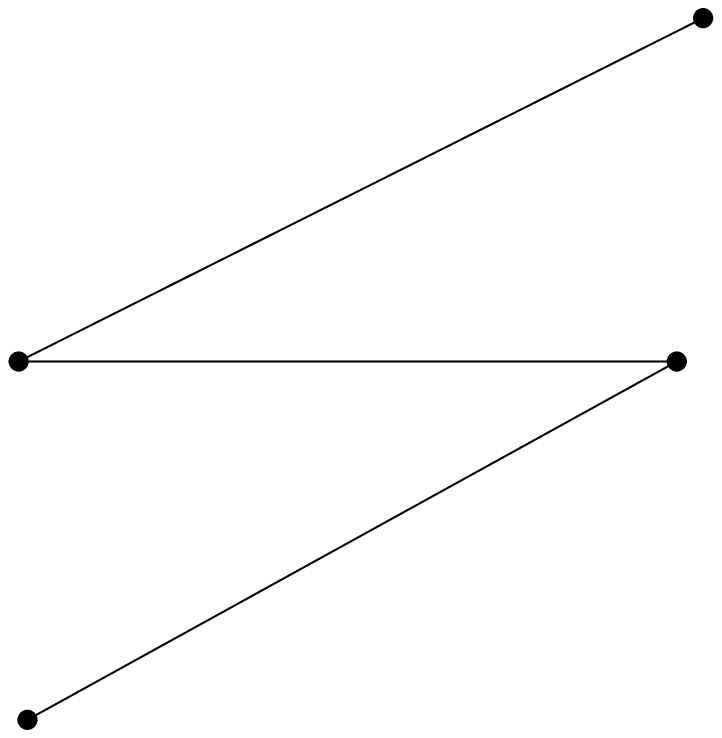}
      \caption{The 3 edge subgraphs}\label{bad3}
 \end{figure}

 \begin{figure}[tbp]
      \includegraphics*[width=14cm,height=14cm]{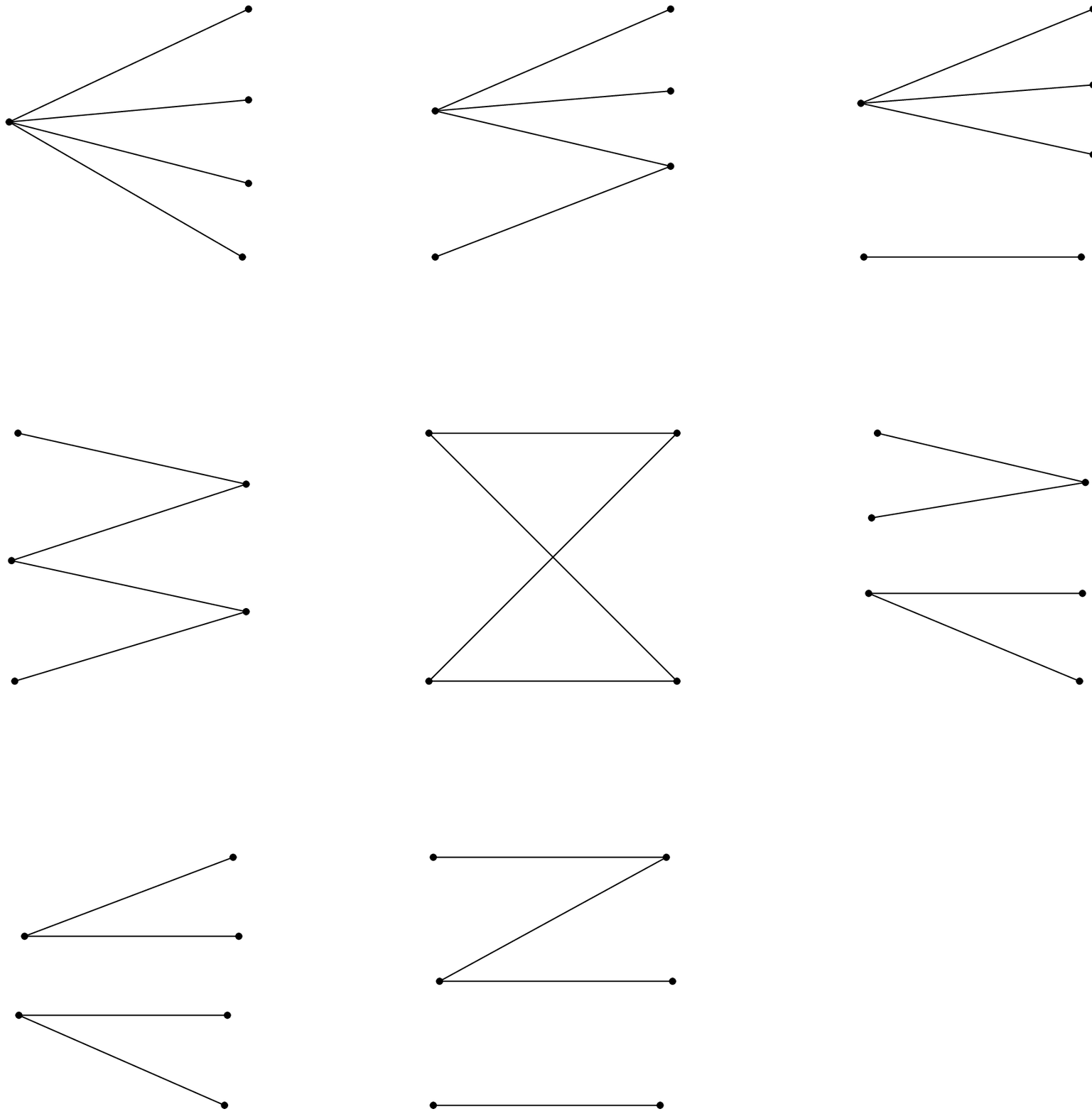}
      \caption{The 4 edge subgraphs}\label{bad4}
 \end{figure}

 If we compute the limits of $\frac{\phi(G,m)}{\varphi(n,r,m)}$
 for the values of $m$ used in Theorem \ref{exactsmall} we find
 that
 $$\lim_{n\rightarrow \infty}\frac{\phi(G,1)}{\varphi(n,r,1)}= 1$$
 $$\lim_{n\rightarrow \infty}\frac{\phi(G,2)}{\varphi(n,r,2)}=
 \frac{e}{2}=1.359$$
 $$\lim_{n\rightarrow
 \infty}\frac{\phi(G,3)}{\varphi(n,r,3)}=\frac{2e^{2}}{9}=1.642\ldots$$
 $$\lim_{n\rightarrow
 \infty}\frac{\phi(G,4)}{\varphi(n,r,4)}=\frac{3e^{3}}{32}=1.883\ldots$$
 This indicates that there exists some stronger form of the
 lower bound for finite graphs, but if the ALMC is true this
 additional factor will be subexponential in $n$, possibly just
 a function of $m$.

 In the expression for $\phi(G,4)$ the number of 4-cycles
 appeared as the first structure in the graph, apart from $n$
 and $r$, which affects the number of matchings. The maximum
 possible value of $a_{4}(G)$ can be found.
 \begin{lemma}\label{upbd4cyc}  Let $G$ be an $r$ regular bipartite graph on
 $2n$ vertices, with $r\ge 2$.  Then
 \begin{equation}\label{upbd4cyc1}
 a_4(G)\le \frac{nr(r-1)^2}{4}.
 \end{equation}
 Equality holds if and only if $n=qr$ and
      $G$ is the disjoint union of $q$ $K_{r,r}$.
 \end{lemma}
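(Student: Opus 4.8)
The plan is to count $4$-cycles by a double counting of cherries (subpaths $P_3$) centred on one side of the bipartition, to reduce the bound to an elementary inequality, and then to read off the equality case from a neighbourhood analysis.

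Write $G=(V_1\cup V_2,E)$ with $\#V_1=\#V_2=n$, and recall that a $4$-cycle in a bipartite graph is exactly a pair $S=\{w_1,w_2\}\subset V_2$ together with a pair of common neighbours of $w_1,w_2$ in $V_1$. For a $2$-element set $S\subset V_2$ let $t(S)$ denote the number of common neighbours of its two vertices. Counting each $4$-cycle by the pair of $V_2$-vertices it covers gives
$$a_4(G)=\sum_{S}\binom{t(S)}{2},$$
the sum being over all $\binom{n}{2}$ pairs $S\subset V_2$. First I would record the linear identity coming from double counting cherries with centre in $V_1$: each $u\in V_1$ has degree $r$ and so gives rise to $\binom{r}{2}$ cherries, each of which is counted once in $t(S)$ for $S$ equal to its pair of endpoints, whence
$$\sum_{S}t(S)=n\binom{r}{2}.$$

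For the bound itself, observe that $t(S)\le r$, since the common neighbours of $w_1,w_2$ form a subset of $N(w_1)$, which has size $r$. The only remaining ingredient is the elementary inequality $\binom{t}{2}\le \tfrac{r-1}{2}\,t$, valid for integers $0\le t\le r$, with equality exactly when $t\in\{0,r\}$ (it is equivalent to $t-1\le r-1$ after cancelling the factor $t$, and trivial at $t=0$). Summing this over all $S$ and substituting the identity for $\sum_S t(S)$ yields
$$a_4(G)\le \frac{r-1}{2}\cdot n\binom{r}{2}=\frac{nr(r-1)^2}{4},$$
which is (\ref{upbd4cyc1}).

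The delicate part, and the step I expect to be the main obstacle, is the equality case. Equality forces $t(S)\in\{0,r\}$ for every pair $S\subset V_2$; and $t(\{w_1,w_2\})=r$ together with $\deg w_1=\deg w_2=r$ forces $N(w_1)=N(w_2)$, so the condition says that any two vertices of $V_2$ have either identical or disjoint neighbourhoods. I would then note that ``same neighbourhood'' is an equivalence relation on $V_2$, partition $V_2$ into its classes, and show that each class $C$, with common neighbourhood $T\subset V_1$, spans a $K_{r,r}$ component: every $u\in T$ is adjacent to all of $C$, and if $u$ had a neighbour $w'\notin C$, then $u\in N(w)\cap N(w')$ for $w\in C$ would give $t(\{w,w'\})\ge 1$, hence $=r$, hence $N(w')=T$ and $w'\in C$, a contradiction; therefore $r=\deg u=\#C$, and similarly the sets $T$ are pairwise disjoint and partition $V_1$. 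Thus $n=qr$ and $G=qK_{r,r}$, and conversely $qK_{r,r}$ attains the bound. The care needed is precisely in excluding overlapping-but-unequal neighbourhoods and in pinning the class sizes down to be exactly $r$, where the hypothesis $r\ge 2$ ensures that cherries exist and the degree argument is nontrivial.
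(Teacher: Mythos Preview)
Your argument is correct, and it takes a genuinely different route from the paper's proof. The paper double counts incidences between edges and $4$-cycles: an edge $e$ lies as the middle edge of exactly $(r-1)^2$ paths $P_4$, so the number of $4$-cycles through $e$ is at most $(r-1)^2$; summing over the $nr$ edges and dividing by $4$ gives the bound. For equality the paper observes that every $P_4$ must close up to a $4$-cycle, which forces each edge to lie in a $K_{r,r}$ component. Your approach instead writes $a_4(G)=\sum_S\binom{t(S)}{2}$ over pairs $S\subset V_2$, bounds each term by $\tfrac{r-1}{2}t(S)$, and uses the cherry identity $\sum_S t(S)=n\binom{r}{2}$. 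The paper's method reaches the inequality in one line, but its equality analysis is sketched rather briefly; your convexity-style bound makes the equality condition $t(S)\in\{0,r\}$ drop out immediately, and the ensuing ``identical or disjoint neighbourhoods'' partition gives a cleaner and more self-contained derivation that $G=qK_{r,r}$.
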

 \begin{proof}
     Given an edge $e$ in $G$,
      the largest number of 4-cycles which can
     contain $e$ is $(r-1)^{2}$. Indeed,
     the number of $P_{4}$'s
     which contain $e$ is $(r-1)^2$.  Each $P_4$ can be
     completed to a $4$ cycle
      if an only if
     $e$ is an edge in a connected component of $G$ equal to
     $K_{r,r}$.  Since $G$ has $nr$ edges and each $4$ cycle
     consists of $4$ edges we deduce the inequality
     (\ref{upbd4cyc1}).  Assume equality in (\ref{upbd4cyc1}).
     Then every edge belongs to a
     $K_{r,r}$ component of $G$.  Hence $G=qK_{r,r}$.
     \qed
 \end{proof}

 This has some simple but nice corollaries.
 \begin{corol}
     The upper and lower matching conjectures are true for $m\leq4$.
 \end{corol}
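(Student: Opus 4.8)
The plan is to exploit how weakly $\phi(m,G)$ depends on $G$ for $m\le 4$, as recorded in Theorem \ref{exactsmall}. First I would dispose of $m=1,2,3$ at once: there the closed forms for $\phi(G,1),\phi(G,2),\phi(G,3)$ involve only $n$ and $r$, so $\phi(m,G)$ is \emph{constant} over all $G\in\cG(2n,r)$. Hence $\lambda(m,n,r)=\Lambda(m,n,r)$ equals this common value, both the conjectured maximizer and the conjectured minimizer attain it automatically, and the upper matching conjecture (\ref{upmatcon}) holds with equality for every graph. For the lower matching conjecture it then remains only to check the explicit inequality $\phi(m,G)\ge\varphi(n,r,m)$ of (\ref{slowmcon}) with the closed form substituted in, which I defer to the last paragraph.

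For $m=4$ the decisive point is that Theorem \ref{exactsmall} writes $\phi(4,G)=p_1(n,r)+a_4(G)$, so the only graph-dependent term is the number of $4$-cycles $a_4(G)$. Maximizing, respectively minimizing, $\phi(4,G)$ over $\cG(2n,r)$ is therefore exactly the problem of optimizing $a_4(G)$. For the upper direction I would invoke Lemma \ref{upbd4cyc}, which gives $a_4(G)\le\frac{nr(r-1)^2}{4}$ with equality if and only if $n=qr$ and $G=qK_{r,r}$; in the divisible case this is precisely (\ref{upmatcon}) for $m=4$, since $qK_{r,r}$ then uniquely maximizes $\phi(4,G)$. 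For the lower direction, $a_4(G)\ge 0$ yields $\phi(4,G)\ge p_1(n,r)$ for \emph{every} $G$, with equality whenever $G$ is $4$-cycle free; this reduces the lower matching conjecture at $m=4$ to the single inequality $p_1(n,r)\ge\varphi(n,r,4)$, and needs no existence claim about girth-$6$ graphs.

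What remains is the only genuinely computational ingredient, the finite lower bound $\phi(m,G)\ge\varphi(n,r,m)$ of (\ref{slowmcon}). After the reductions above this collapses to four explicit inequalities in $n$ and $r$: the common values of $\phi(m,G)$ for $m=1,2,3$, and $p_1(n,r)$ for $m=4$, each compared against $\varphi(n,r,m)$. The case $m=1$ is transparent, since $\varphi(n,r,1)=\bigl(1-\tfrac{1}{(rn)^2}\bigr)^{rn-1}rn<rn=\phi(1,G)$. The main obstacle is $m=2,3,4$, where one must show that the polynomial expressions of Theorem \ref{exactsmall} dominate the mixed polynomial–exponential factor $\varphi(n,r,m)$ for all admissible $n\ge r$; the limits $\tfrac{e}{2},\tfrac{2e^2}{9},\tfrac{3e^3}{32}$, all exceeding $1$, supply the asymptotic margin, so the work is to make this uniform in $n$. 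I expect to do so by bounding the factor $\bigl(1+\tfrac{1}{rn}\bigr)^{rn-1}$ above by $e$ and controlling the remaining factors by monotonicity in $n$, reducing each claim to a rational inequality that can be verified directly.
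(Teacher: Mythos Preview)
Your proposal is correct and follows the same line the paper has in mind. The paper gives no explicit proof of this corollary; it is stated as an immediate consequence of Theorem~\ref{exactsmall} (the closed forms for $\phi(m,G)$, $m\le 4$), the limit computations $\phi(G,m)/\varphi(n,r,m)\to c_m>1$, and Lemma~\ref{upbd4cyc}. Your decomposition---constant $\phi(m,G)$ for $m\le 3$, and $\phi(4,G)=p_1(n,r)+a_4(G)$ for $m=4$ with the upper bound on $a_4(G)$ supplied by Lemma~\ref{upbd4cyc}---is exactly this.

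You are in fact more explicit than the paper about the lower matching conjecture side: you correctly isolate that after the structural reductions one is left with four concrete inequalities $\phi(m,G)\ge\varphi(n,r,m)$ (the common value for $m\le 3$, and $p_1(n,r)$ for $m=4$), and you note that the asymptotic margins $e/2$, $2e^2/9$, $3e^3/32$ must be made uniform in $n$. The paper does not carry out this finite-$n$ verification either, so your plan to bound $(1+\tfrac{1}{rn})^{rn-1}\le e$ and reduce to a rational inequality is a reasonable way to complete what the paper leaves implicit. The one caveat is that this last step is still only sketched in your proposal; if you are writing this up fully, those three inequalities for $m=2,3,4$ should actually be checked, since neither you nor the paper has done so.
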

 In \cite{wormald:1978} the distribution of the number of short
 cycles in a bipartite random regular graph was determined, and applying
 that result here we find that,
 \begin{corol}
     For  random graphs from $\mathcal{G}(2n,r)$ we have that
     $\phi(4,G)-p_{1}(n,r)$ converges in distribution  to a Poisson random
     variable with expectation $\frac{(r-1)^{4}}{4}$.
 \end{corol}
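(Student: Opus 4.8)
The plan is to recognise the quantity $\phi(4,G)-p_1(n,r)$ as a pure cycle count and then to quote the classical Poisson approximation for short cycles in random regular graphs. By Theorem~\ref{exactsmall}(4) every $G\in\cG(2n,r)$ satisfies $\phi(4,G)=p_1(n,r)+a_4(G)$, where $a_4(G)$ is the number of $4$-cycles of $G$ and $p_1(n,r)$ depends only on $n$ and $r$. Hence, for $G$ drawn at random from $\cG(2n,r)$, the recentred variable $\phi(4,G)-p_1(n,r)$ coincides identically with $a_4(G)$, and the corollary is equivalent to the assertion that the $4$-cycle count of a random $r$-regular bipartite graph converges in distribution to a Poisson law with mean $\frac{(r-1)^4}{4}$.

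First I would invoke the theorem of Wormald~\cite{wormald:1978}: in a random $r$-regular bipartite graph the numbers of cycles of lengths $4,6,8,\dots$ converge jointly, as $n\to\infty$, to independent Poisson random variables. In particular $a_4(G)$ has a Poisson limit, so only its parameter $\lambda$ remains to be pinned down. Since an integer-valued sequence with a Poisson limit necessarily has $\lambda=\lim_{n\to\infty}E(a_4(G))$, it suffices to compute the limiting expected number of $4$-cycles.

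To evaluate that limit I would run the standard first-moment estimate in the bipartite configuration model with $nr$ half-edges on each side. A rooted, oriented $4$-cycle is obtained by choosing an ordered quadruple of two distinct left and two distinct right vertices ($\sim n^4$ ways), assigning at each of the four vertices an ordered pair of incident half-edges to its two cycle edges ($\bigl(r(r-1)\bigr)^4$ ways), and requiring the four resulting half-edge pairs to be matched (probability $\sim (nr)^{-4}$); dividing by the $4$ equivalent (left-root, orientation) labellings of an unrooted, unoriented $4$-cycle gives
\[
E(a_4(G)) \;\sim\; \frac14\,n^4\,\bigl(r(r-1)\bigr)^4\,(nr)^{-4}\;=\;\frac{(r-1)^4}{4}.
\]
This identifies $\lambda=\frac{(r-1)^4}{4}$ and closes the argument.

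The remaining points require only care, not ingenuity. One must confirm that the probability model underlying $\cG(2n,r)$ is the one to which Wormald's theorem applies, or is contiguous to it, so that the Poisson limit and its parameter transfer; this is standard, since for fixed $r$ the short-cycle Poisson limit is well known to survive the conditioning on simplicity that passes from the configuration model to the uniform measure on $\cG(2n,r)$. The symmetry factor $4$ in the cycle count and the leading-order Stirling asymptotics for $(nr)^{-4}$ must be tracked correctly, but present no genuine obstacle. Thus the entire substance of the statement is carried by Theorem~\ref{exactsmall} together with \cite{wormald:1978}, with the first-moment computation above supplying the explicit mean $\frac{(r-1)^4}{4}$.
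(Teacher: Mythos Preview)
Your proposal is correct and follows essentially the same approach as the paper: both reduce the statement to the identity $\phi(4,G)-p_1(n,r)=a_4(G)$ from Theorem~\ref{exactsmall}(4) and then invoke Wormald's result~\cite{wormald:1978} on the Poisson limit for short cycles in random regular bipartite graphs. The paper simply cites Wormald for the full conclusion (including the parameter $\frac{(r-1)^4}{4}$), whereas you additionally supply the configuration-model first-moment computation to identify the mean; this is a harmless elaboration rather than a different route.
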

 This means that the expected number of 4-edge matchings in a
 random graph is only a fixed constant larger than the minimum
 possible, and also only a fixed constant larger than the lower
 matching conjecture

 \bibliographystyle{plain}

 \end{document}